\newcommand{\argmax}[1]{\underset{#1}{\operatorname{arg}\!\operatorname{max}}\;}
\newcommand{\argmin}[1]{\underset{#1}{\operatorname{arg}\!\operatorname{min}}\;}
\newcommand{\w}{\widehat}
\newcommand{\one}{\mathds{1}}
\newcommand{\cA}{\mathcal{A}}
\newcommand{\cD}{\mathcal{D}}
\newcommand{\E}{\mathbb{E}}
\newcommand{\cH}{\mathcal{H}}
\newcommand{\cM}{\mathcal{M}}
\renewcommand{\P}{\mathbb{P}}
\newcommand{\cR}{\mathcal{R}}
\newcommand{\R}{\mathbb{R}}
\newcommand{\cT}{\mathcal{T}}
\newcommand{\cY}{\mathcal{Y}}
\newcommand{\dd}{\ensuremath{\,\textrm{d}}}
\newtheorem{theo}{Theorem}
\newtheorem{prop}{Proposition}
\newtheorem{lemme}{Lemma}
\newtheorem{rem}{Remark}
\newtheorem{ass}{Assumption}
\def\keywordname{{\bfseries Keywords}}%
\def\keywords#1{\par\addvspace\medskipamount{\rightskip=0pt plus1cm
\def\and{\ifhmode\unskip\nobreak\fi\ $\cdot$
}\noindent\keywordname\enspace\ignorespaces#1\par}}
\begin{document}

\title{ERM-Lasso classification algorithm \\ for Multivariate Hawkes Processes paths}
\author{}
\date{July 2024}

\maketitle

\begin{center}
\author{Christophe Denis$^{(1,2)}$, Charlotte Dion-Blanc$^{(1)}$, Romain E.\,Lacoste$^{(2)}$, Laure Sansonnet$^{(1,3)}$ \bigskip \\
\textit{${(1)}$ LPSM, UMR 8001, Sorbonne Université \\
${(2)}$ LAMA, UMR 8050, Université Gustave Eiffel \\
${(3)}$ Université Paris-Saclay, AgroParisTech, INRAE, UMR MIA Paris-Saclay}}
\end{center}

\bigskip

\begin{abstract}
We are interested in the problem of classifying Multivariate Hawkes Processes (MHP) paths coming from several classes. MHP form a versatile family of point processes that models interactions between connected individuals within a network. In this paper, the classes are discriminated by the exogenous intensity vector and the adjacency matrix, which encodes the strength of the interactions. The observed learning data consist of labeled repeated and independent paths on a fixed time interval. Besides, we consider the high-dimensional setting, meaning the dimension of the network may be large {\it w.r.t.} the number of observations. We consequently require a sparsity assumption on the adjacency matrix. In this context, we propose a novel methodology with an initial interaction recovery step, by class, followed by a refitting step based on a suitable classification criterion. To recover the support of the adjacency matrix, a Lasso-type estimator is proposed, for which we establish rates of convergence. Then, leveraging the estimated support, we build a classification procedure based on the minimization of a $L_2$-risk. Notably, rates of convergence of our classification procedure are provided. An in-depth testing phase using synthetic data supports both theoretical results. 
\end{abstract}

\keywords{Multivariate Hawkes Process \and Classification \and Empirical Risk Minimization \and High Dimension \and Lasso}

\section{Introduction}
\label{sec:intro}

The supervised classification of complex data has drawn a lot of attention in recent years. This statistical problem covers a broad class of application including classification of multivariate times series \citep{DLtimeseries}. In particular, cutting-edge methodology for performing classification of time sequences of events is a matter of great interest.
In this paper, we tackle the task of supervised classification of sequences of events into $K$ classes with $K>1$. We therefore consider that each class is characterized by its own underlying occurrence dynamics, and the aim is to discriminate between them. 
In this work, observations in each class are assumed to come from a multivariate Hawkes process (MHP), denoted $N$, of size $M\geq1$, for which the probability distribution of events 
is given by the vector of intensity process.
The shape of the vector of intensity process is assumed to be common to all classes. Therefore, the classes are discriminated according to the parameters that describe their vector of intensity process:
the baselines and the adjacency matrix that governs the relations between the components of the process.
For instance, we can consider that the observations come from different groups of people observed over a given time interval whose interactions are modeled through a MHP. 


In this work, the observations consist in $n$ independent repeated observations of a mixture of the MHP observed on a fixed time interval $[0,T]$ and their associated label. In particular, we do not assume that data have reached the stationary regime. Hence, the asymptotic is in $n$ the number of repetitions.
Furthermore, we consider the high-dimensional framework,
where the dimension $M:=M_n$ of the MHP may be large with respect to the sample size $n$.
In view of the high-dimensional issue, we consider sparsity assumption on the adjacency matrix of the process. We propose a classification procedure that take advantage of the estimation of the support of the adjacency matrix.


\paragraph{Related works.}

Hawkes processes (HP) are a family of point processes introduced by \cite{hawkes1971}. Such processes model complex temporal dynamics, where the occurrence of events is impacted by past activity.
The multidimensional version of these processes, MHP, is a natural generalization that considerably enriches modeling possibilities. 
Indeed, in addition to model self-exciting interactions, such a model takes into account positive interactions between connected individuals within a network. These interactions are encoded in the adjacency matrix. 
Historically applied in seismology \citep{ogata}, they have since been used in a wide range of applications including genomics \citep{reynaudschbath}, neuroscience
\citep{bonnet2022}, finance \citep{embrechts2011}, urban crime \citep{crime}, order book in finance \citep{finance} and football \citep{footmathieu}.
Another important application is the modeling of social network activity, as in \cite{twitter} and further works. In addition, the Hawkes processes are frequently used as spike-trains models in neurosciences, for example in \cite{reynaud2013spike,stefano2023, bonnet2023inference}.
Recently, they have also been used in the ecological field in \cite{ecoHawkes2024} for interaction between species and in \cite{DDBLSB} for bats monitoring.
Recently, in the context of repeated observations with $T$ fixed, \cite{lotz2024} provides a likelihood ratio test for testing presence of interaction.


In the high-dimensional setting, meaning that the number of components $M$ is large, it is classical to impose sparsity assumptions on the adjacency matrix that characterizes the intensity process.
Therefore, it appears that reconstructing the support of the adjacency matrix or the
connectivity graph, is a matter of great interest, 
which is related to the Granger causality \citep[see][]{EDD2017granger,sulem2024bayesian}.
In particular, this is a crucial issue for connectivity of neurons, see for example \cite{lambert}, or in social network \citep{hensen2010}. 


Let us focus on the Lasso literature in the classical Gaussian framework. The Lasso procedure has been originally introduced in \cite{tibshirani1996} and \cite{chen98lasso}. It is a popular statistical method for high-dimensional problems for which efficient implementation procedure have been developed. Besides, the Lasso procedure has been widely studied from the theoretical point of view~\citep[see \emph{e.g}][]{meinshausen2006high,tropp2006just,VDG}.
In particular, support recovery results has been investigated in~\cite{wainwright2009sharp} as well as multi-class classification methods~\citep[see][]{abramovich2018high}. Let us emphasize that an induced and undesired effect of $\ell_1$ penalization is the shrinkage of large coefficients. 
To bypass this issue, refitting strategies are commonly used and well covered in the literature, see for example \cite{Chzhen_2019}. 


For Hawkes process, the work of~\cite{donnet2020nonparametric} is dedicated to a nonparametric Bayesian procedure to tackle high-dimensionality.
Let us mention the work \cite{ADM4} which proposes an efficient algorithm for a Lasso estimator for high-dimension Hawkes process, implemented in the Python library \texttt{tick} \citep{tick}.
Later, in the work of~\cite{BGM}, the authors use a least-squares contrast penalized with an $\ell_1$-norm together with a trace norm. The resulting estimator benefits from a sparse structure with low rank. Its construction relies only on an observation over a time interval $[0,T]$, with $T \rightarrow +\infty$. 
In particular, theoretical results are obtained for the intensity process estimation under the asymptotic $T \rightarrow +\infty$.
Under the same observational setup,
several sparse support recovery procedure has also been proposed in the literature, for example a likelihood ratio testing procedure in \cite{kim}, or a group-Lasso least-squares penalized estimator in \cite{groupLasso2024}.


The present work falls in the supervised classification setting. In particular, we assume that the learning sample of size $n$ consists of i.i.d.\,labeled data where the features are the jump times  of a multivariate Hawkes process observed on the fixed time interval $[0,T]$. 
In a different observational setup, some works in natural language processing also tackle some similar issues as \cite{classiftwitter} and later \cite{tondulkar2022hawkes}. Closest to ours, the work of~\cite{DDS} provides a classification procedure for observations coming from a univariate HP where the classes are discriminated by the kernel of the intensity process. In particular, this method is applied in~\cite{DDBLSB}, 
for modeling echolocation calls of bats recording in several sites throughout France.
In the present work, we generalize the approach developed in~\cite{DDS} in the multivariate setting (MHP) under sparsity assumptions. 


\paragraph{Main contributions.}

In the present paper, we propose a novel classification algorithm, the \texttt{ERMLR} algorithm that relies on a two-step procedure. A first step is dedicated to the estimation of the support of the adjacency matrix as well as the weights of the mixture. Then, in a second step, taking advantage of the estimated support, we build a classifier based on the empirical risk minimization principle. We establish rates of convergence for both support estimator and classification procedure. Furthermore, we show through a numerical study that our algorithm exhibits good numerical properties. 
To sum up our contributions are threefold.

\begin{itemize}
\item First, we provide a general device to handle high-dimensional issue for MHP.  Following~\cite{BGM}, the estimation of the parameters of the process as well as the support of the adjacency matrix is based on the  minimization of a Lasso-penalized contrast.
We establish rates of convergence of the estimated support and the estimated coefficients of the MHP. Notably, our theoretical findings show that the established rates of convergence are comparable to those obtained in the classical Gaussian setting.  In particular, we extend the results obtained in~\cite{BGM} and~\cite{groupLasso2024} in the context of repeated observations.
\item Second, we provide a general classification algorithm dedicated to the supervised classification of MHP. A salient point of our procedure is that we handle high-dimensional issue by leveraging the estimation of the support of the adjacency matrix. Specifically, we consider a classifier that relies on the minimization of a $L_2$-risk
on a set of parameters that depends only on the estimated support of the parameters. We show that the rates of convergence of our classification algorithm is, up to a logarithmic factor, of order the square root of the size of the support over the sample size $n$.
Notably, we extend the results obtained in~\cite{DDS} to the 
high-dimensional framework.
\item Finally, in view of the numerical complexity of our problem, the implementation of our classification algorithm is a major challenge. 
The implementation of the overall procedure relies on cutting-edge optimization algorithms. Specifically, the Lasso-penalized contrast is optimized with the \texttt{FISTA} algorithm while the calibration of the $l_1$ penalty is performed using the \texttt{EBIC} criterion. Then, for the minimization of the $L_2$-risk, we consider a parameter-free projected adaptive gradient descent \texttt{Free Adagrad} recently introduced in~\cite{chzhen2023}. We evaluate the performance of our procedure on synthetic data and show the good performance of our algorithm. In particular, it reveals that our algorithm succeeds well for both support recovery and classification accuracy.
\end{itemize}


\paragraph{Outline of the paper.}

Section~\ref{sec:framework} describes the model, along with the necessary assumptions.
Section~\ref{sec:classif} proposes the classification algorithm named \texttt{ERMLR}.
Section~\ref{sec:theo} provides the main theoretical results on both Lasso estimator of the MHP parameters and the classification procedure.
Then, full implementation details about the procedure are given in Section~\ref{sec:implem} while Section~\ref{sec:NumExp} is devoted to numerical results. We also provide a discussion in Section \ref{sec:Dicussion}. Finally, the proofs are relegated in Appendix.


\paragraph{Notations.}
For a matrix $A \in \mathbb{R}^{M \times M}$, the Frobenius norm is defined as follows $\|A\|_F = (\textrm{Tr}(A^\prime A))^{1/2} = (\sum_{j,j'} a_{j,j'}^2)^{1/2}$, where $A^\prime$ denotes the transposition of the matrix $A$ and $\textrm{Tr}$ is the trace operator that returns the sum of diagonal entries of a square matrix. 
Recall that $\rho(A) \leq \|A\|_2 \leq \|A\|_F$ where $\|A\|_2$ is the subordinate norm and $\rho(A)$ is the spectral radius of $A$, which is the largest singular value of $A$.
For an integer $L \geq 1$, the set $\{1, \ldots,L\}$ is denoted by $[L]$.


\section{General framework}
\label{sec:framework}

Section~\ref{subsec:modelNotations} introduces the considered model, some notation, and the considered multiclass classification problem.
Section~\ref{subsec:Assumptions} is dedicated to the presentation of the main assumptions. Finally,
a closed-form expression of the optimal classifier is provided in Section~\ref{subsec:BayesClassifier}.

\subsection{Formal definitions and notation}
\label{subsec:modelNotations}

Let us first introduce the general linear multivariate Hawkes process and then the considered multiclass classification model.

\paragraph{Multivariate counting process.}

Consider a $M$-dimensional counting process $N$ observed on a fixed time interval $[0,T]$, with $M>1$ the dimension of the network.
More specifically, we assume that the counting process $N=(N_1(t),\ldots,N_M(t))_{t \in [0,T]}$ is a linear multivariate Hawkes process, where for each $j \in [M]$, $t \in [0,T]$, $N_{j}(t)$ denotes the number of events that have occurred before time $t$ at location $j$.
The filtration (or history) at time $t \in [0,T]$ associated to the process $N$ 
is denoted by $\mathcal{F}_{t}$. Informally, it contains the necessary information for generating the next points of $N$. Finally, the set of observed jump times of $N$ over $[0,T]$ is denoted by $\mathcal{T}_T = (\mathcal{T}_{T,1}, \ldots, \mathcal{T}_{T,M})$, where for each $j \in [M]$, $\mathcal{T}_{T,j}$ is the observed jump times associated to the process $N_j$. Each process $N_j$ can be characterized by its intensity function. Heuristically, at a given time, the intensity function gives the infinitesimal probability of observing an event in the near future, conditionally on the past of the process. For each $j \in [M]$, the predictable intensity of the process $N_j$ is then defined by 
\begin{equation}\label{def:int}
    \lambda^*_j(t)
    = \mu^*_j + \sum_{j'=1}^M a^*_{j,j'} \int_0^t h(t-s)\dd N_{j'}(s)
    = \mu^*_j + \sum_{j'=1}^M a^*_{j,j'} \sum_{T_\ell \in \cT_{t,j'}} h(t-T_\ell),
\end{equation}
where $\mu^* = (\mu^*_j)_{j \in [M]}$ is the vector of exogenous intensities, $A^* = (a^*_{j,j'})_{1 \leq j,j' \leq M}$ is the matrix of interactions, and $h$ is the kernel function. For each $j \in [M]$, the coefficient $\mu^*_j$ models the arrival of spontaneous events for the $j$-th component. For each $j,j' \in [M]$, the coefficient $a^*_{j,j'}$ is non-negative and expresses the positive influence of the one-dimensional process $N_{j'}$ on the one-dimensional process $N_j$.
Finally, the kernel $h$ is a non-negative
function supported on $\R_{+}$ such that $\|h\|_1 =1$. It dictates how quick these influences vanish over time.
In the following, the kernel function $h$ is assumed to be known.
Finally, let us define the support of $A^*$,
or the active set, denoted $S^*$. It corresponds to the positions of the non-zero coefficients $a_{j,j'}$, meaning that component $j'$ has an impact on component $j$.

\begin{rem}
The second equality in the definition of the intensity~\eqref{def:int} is easily obtained by using that, for any function $f$, $j \in [M]$, and $t \in [0,T]$, the following stochastic integral is defined as the counting measure
\begin{equation*}
    \int_0^t f(s) \dd N_j(s) = \sum_{T_\ell \in \cT_{t,j}} f(T_\ell).
    \end{equation*}
\end{rem}

\paragraph{Multiclass setting.}

We consider the multiclass classification problem, 
where each data point is characterized by a couple $(\mathcal{T}_T,Y)$, where $\mathcal{T}_T$ is the set of observed jump times of a counting process $N$ over $[0,T]$ and $Y \in [K]$ is its label. In particular, we assume that $N=(N_1,\ldots,N_M)$ is a mixture of a $M$-dimensional linear HP observed on the time interval $[0,T]$. More precisely, conditional on $Y$, the counting process $N$ is a $M$-dimensional linear HP, where for each $j \in [M]$, the predictable intensity 
of $N_j$ depends on the label $Y$ and is defined at time $t \geq 0$ as follows
\begin{equation}
\label{def:lambdastar}
\lambda^{*}_{Y, j}(t) = \mu^*_{Y,j}+ \sum_{j'=1}^M a^*_{Y,j,j'} \int_0^t h(t-s)\dd N_{j'}(s).
\end{equation}
The vector $\mu_Y^* = (\mu_{Y,j}^*)_{j \in [M]}$ is the vector of baselines associated to the class $Y$, and the matrix $A_Y^*=(a^*_{Y,j,j'})_{1 \leq j,j' \leq M}$ is the $M\times M$ adjacency matrix of the network associated to the label $Y$. 
This choice of modeling is motivated by the fact that the classes are characterized by different underlying network behavior, where an edge in the network matches a non-zero $a_{j,j'}$.

We assume in the following that the parameters $(\mu^*_{Y}, A^*_Y)$ are unknown as well as the distribution of $Y$ which is denoted by $p^* = (p^*_k)_{k \in [K]}$.
Finally, the kernel function $h$ is assumed to be known and for the sake of simplicity, it does not depend on the classes or on the components of the process. Note that in the numerical section, we consider the standard choice of exponential kernel. However, more general choice of the kernel function may be investigated. For instance, in~\cite{BGM} the authors consider the  case where $h$ is a sum of exponential functions, which preserves the markovianity of the intensity process. 

\paragraph{Objective.}

In the multiclass setup, the objective is to build a classifier, a measurable function $g$ such that
$g(\cT_T)$ belongs to $[K]$, and provides an accurate prediction of the label $Y$.
In particular, the misclassification risk assesses the quality of such predictor $g$. It is defined as 
\begin{equation*}
\cR(g) := \mathbb{P}\left(g(\cT_T) \neq Y\right).
\end{equation*}  
The set of all classifiers is denoted by $\mathcal{G}$. Naturally, we aim at considering the predictor $g^*$, namely the Bayes classifier, that achieves the minimum risk over $\mathcal{G}$. In Section~\ref{subsec:BayesClassifier}, we provide an explicit formula of the oracle classifier $g^*$. Nevertheless, since the distribution of the observation $(\mathcal{T}_T,Y)$ is assumed to be unknown, we build a predictor that relies on a training sample of size $n$ which consists of i.i.d. copies of $(\mathcal{T}_T, Y)$.
At this step, we draw the reader attention to the fact that the considered asymptotic is as follows. The horizon time $T$ is fixed, while the sample size $n$ goes to infinity. Recall that the size $M$ of the MHP is actually $M=M_n$ and can increase with $n$.
In the sequel, a predictor built on the training data is denoted by $\hat{g}$. In particular, we require that $\hat{g}$ satisfies
the consistency property,
\begin{equation*}
\E\left[\cR(\hat{g}) -\cR(g^*) \right] \rightarrow 0,    
\end{equation*}
when $n$ tends to infinity.
However, in our study, the intrinsic dimension of our problem $M^2$ may be much larger than the sample size of the learning dataset. In this case, predictor $\hat{g}$ is not consistent. Therefore, as it is usual in this high-dimensional setup, we introduce a sparsity assumption for our model.

\subsection{Assumptions}
\label{subsec:Assumptions}

This section is dedicated to the main assumptions that are assumed throughout the paper. In particular, in our multivariate framework, we allow the dimension parameter $M$ to be large, which may induces that $M^2$ is much larger than the size of the training sample. To alleviate this issue, we introduce a sparsity assumption on the matrices $(A_k^*)_{k \in [K]}$. 

Firstly, we introduce an assumption which ensures that each class occurs with non-zero probability. 

\begin{ass}
\label{ass:prob}
There exists $p_0 > 0$ such that $\min_{k \in [K]}({p}_k^*) > p_0$. 
\end{ass}

We also assume that the parameters of the process belongs to a compact set. 

\begin{ass} (Compactness)
\label{ass:mu}   
\begin{enumerate}
\item[(i)] There exists $0 < \mu_0 < \mu_1$ , {\it s.t.} for $i\in [M]$ and $k \in [K]$,
 $\mu_0 \leq \mu^*_{k,i}  \leq \mu_1$. 
 \item[(ii)] There exists $C_A>0$, {\it s.t.}  $\max_{k\in [K]} \|A_k^*\|_F< C_A $.
\end{enumerate}
\end{ass}

Furthermore, we consider the following assumptions, which imply that the process $N$ admits finite exponential moment.

\begin{ass} (Stability condition)
\label{ass:h}
\begin{enumerate}
\item[(i)] The kernel function $h$ belongs to the set $\cH:= \{ h: \R_+ \rightarrow \R_+, \int h(t) \dd t = 1 \}$ and  is bounded.
\item[(ii)] $\max_{k \in [K]} \rho(A_k^*) < 1$.
\end{enumerate}
\end{ass}

Let us notice here that if $C_A<1$ it implies that $\rho(A)<1$.

\begin{ass} (Exponential moment)
\label{ass:expomoments}
There exist $a>0$, and  $C>0$ that do not depend on $M$, such that
\begin{equation*}
    \sup_{j \in [M]} \E[\exp(aN_j(T))] < C.
\end{equation*}
\end{ass}


\begin{rem}
Note that~\cite{leblanc2024exponential} proves that the exponential moment of the multivariate Hawkes process is finite, under Assumption \ref{ass:h} and when the intensity process is stationary. Nevertheless, in the general case, the bound of the moment depends on $M$. Hence, we require a stronger condition, such as Assumption~\ref{ass:expomoments} which is more suitable in the high-dimensional framework. For instance, this assumption is satisfied if there exists a positive constant $C$, that does depend on $M$, such that $\sum_{j \in [M]} \mu_j \leq C$. 
\end{rem}

Finally, we assume that for each $k \in [K]$, the adjacency matrix $A_k^*$ is sparse, meaning that a few coefficients are non-zero. For each $k \in [K]$, let us denote by 
\begin{equation*}
S_k^* := 
\left\{(j,j') \in [M]^2 , \;\; a^*_{k,j,j'} \neq 0 \right\}
\end{equation*}
the active set (or support) of $A_k^*$, $|S_k^*|$ its cardinality, and $S_k^{*c}$ its complement. Throughout the paper, we consider the following assumption.

\begin{ass} (Sparsity assumption)
\label{ass:sparAss}
There exists a constant $s^* > 0$ such that 
$$\max_{k \in [K]} |S_k^*| \leq s^*.$$
\end{ass}

In particular, in our high-dimensional setting, we assume that $s^*<<M^2$.
Since we do not assume sparsity on the vectors $\mu_k, \ k \in [K]$, we consider the following interplay between parameter $M$ and the sample size of the training dataset.
The dimension of the process $M$ may depend on $n$ with $M^2 >> n$. In this case, the sparsity assumption is crucial to overcome the high-dimension issues. However, we assume that $M$ satisfies
$M/n \rightarrow 0$.

\begin{rem}
Note that we only assume sparsity on the adjacency matrix, but not on the vector $\mu^*$. It ensures that all the components of the process are active. However,
as in~\cite{BGM}, it may be possible to consider also sparsity assumption on the vector of exogenous intensities. Nevertheless, this is not the line taken in this work.
\end{rem}

\subsection{Bayes rule}
\label{subsec:BayesClassifier}

In this section, we exhibit a closed-form expression of the Bayes classifier $g^*$ that minimizes the misclassification risk over the set $\mathcal{G}$. 
The Bayes classifier is characterized by, 
\begin{equation*}
g^{*} \left(\mathcal{T}_T\right) \in \argmax{k \in \mathcal{Y}} {\pi^*_{k}(\mathcal{T}_T)},
\end{equation*}
with $\pi^*_k\left(\mathcal{T}_T\right)= \mathbb{P}\left(Y = k | \mathcal{T}_T\right)$.
The following result is an extension of the result given in \cite{DDS}. It gives the expression of the conditional probabilities $\pi^*_k$ and then provides a closed form of the Bayes classifier.

\begin{prop}
\label{prop:prop1}
Let $T \geq 0$. For each $k \in [K]$,
we define,  
\begin{equation}\label{def:F}
F^*_k(\cT_T)  
 := -\sum_{j=1}^M  \int_0^T \lambda^*_{k,j}(s) d s 
 + \sum_{j=1}^M \sum_{T_\ell\in \mathcal{T}_{T,j}} \log\left(\lambda^*_{k,j}(T_\ell )\right).
 \end{equation}
Therefore, the sequence of conditional probabilities satisfies
\begin{equation*}
\pi^*_k\left(\mathcal{T}_T\right) = \frac{ p^*_k{\rm e}^{{F}^*_k(\mathcal{T}_T)}}{\sum_{k'=1}^K p^*_{k'} {\rm e}^{{F}^*_{k'}(\mathcal{T}_T)}}\quad {\mathbb P}-a.s.,
\end{equation*}
where 
${F}^*=(F^*_1, \ldots, F^*_K)$.
\end{prop}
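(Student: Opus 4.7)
The plan is to compute $\pi_k^*$ through Bayes' rule, which reduces the problem to identifying the Radon--Nikodym derivative of the conditional law of $\cT_T$ given $Y=k$ with respect to a common dominating measure, and then algebraically simplifying. A natural choice of reference measure is the law $P_0$ of a standard $M$-dimensional unit-rate Poisson process on $[0,T]$, which dominates each conditional law under Assumptions~\ref{ass:mu} and~\ref{ass:h} (the lower bound $\mu^*_{k,j} \geq \mu_0 > 0$ guarantees that $\lambda^*_{k,j}(t) > 0$ everywhere, so equivalence of measures holds).

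First, conditionally on $Y = k$, the process $N$ is an MHP with $\cF_t$-predictable intensity $\lambda^*_{k,j}$ defined in~\eqref{def:lambdastar}. By the standard point process likelihood formula (a consequence of Jacod's theorem / Girsanov for marked point processes, see e.g.\ Daley--Vere-Jones), the Radon--Nikodym derivative of the conditional law of $\cT_T$ given $Y = k$ with respect to $P_0$ is
\begin{equation*}
L_k(\cT_T) = \exp\left(\sum_{j=1}^M \int_0^T \bigl(1 - \lambda^*_{k,j}(s)\bigr) \dd s + \sum_{j=1}^M \sum_{T_\ell \in \cT_{T,j}} \log \lambda^*_{k,j}(T_\ell)\right).
\end{equation*}
The integrability needed to apply this representation (in particular a Novikov-type condition) is ensured by the bounds in Assumptions~\ref{ass:mu}--\ref{ass:h} together with the exponential moment control of Assumption~\ref{ass:expomoments}, which guarantees that $\lambda^*_{k,j}$ is uniformly bounded in $L^p$ on $[0,T]$.

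Next, applying Bayes' rule with the common dominating measure $P_0$ yields
\begin{equation*}
\pi_k^*(\cT_T) = \frac{p_k^* L_k(\cT_T)}{\sum_{k'=1}^K p^*_{k'} L_{k'}(\cT_T)} \qquad \P\text{-a.s.}
\end{equation*}
The deterministic factor $\exp\bigl(\sum_{j=1}^M \int_0^T 1\, \dd s\bigr) = e^{MT}$ does not depend on $k$, hence factors out of numerator and denominator. Writing $L_k(\cT_T) = e^{MT}\exp(F_k^*(\cT_T))$ with $F_k^*$ as in~\eqref{def:F} immediately gives the announced expression of $\pi_k^*$, and the form of the Bayes classifier $g^*$ follows by taking the $\argmax$.

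The only genuinely delicate point is the justification of the likelihood formula for $L_k$: one must check that the two laws are mutually absolutely continuous and that the exponential local martingale appearing in Girsanov's theorem is in fact a true martingale on $[0,T]$. Both facts follow from the fixed horizon $T$ and the boundedness built into Assumptions~\ref{ass:mu}(ii) and~\ref{ass:expomoments}, so no extra work beyond invoking these standard tools is required.
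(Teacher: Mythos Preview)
Your proof is correct and follows essentially the same route as the paper: both take the unit-rate $M$-dimensional Poisson process as reference measure, invoke the point-process likelihood/Girsanov formula (the paper cites Daley--Vere-Jones, Chapter~13) to write the Radon--Nikodym derivative of each conditional law, and then apply Bayes' rule, cancelling the common $e^{MT}$ factor. The paper's version is simply terser about the regularity conditions you spell out.
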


Proposition~\ref{prop:prop1} exhibits an explicit link between the unknown parameters $(\mu_k^*, A_k^*)_{k \in [K]}$ and the Bayes classifier. In particular, it suggests that 
a classification rule can be easily obtained by replacing the unknown parameters by estimators in Equation~\eqref{def:F}. However, the performance of the resulting classifier strongly depends on the quality of the considered estimators. In the present framework, without taking account Assumption~\ref{ass:sparAss}, the high-dimension of the problem could lead to bad estimates. To overcome this difficulty, we propose a classification algorithm tailored to our setting, which involves Lasso-type estimators.


\section{Classification algorithm}
\label{sec:classif}

In this section, we present the proposed classification algorithm that relies on a {\it refitting} strategy \citep[see \emph{e.g.}][]{Chzhen_2019}. The algorithm is referred as \texttt{ERMLR} for {\it Empirical Risk Minimizer with Lasso Refitting}.
Since the construction of the prediction rule goes in several steps and involves a splitting of the training dataset, for the sake of the simplicity, we consider a dataset of size $2n$. More specifically, the learning dataset is denoted 
denoted $\mathcal{D}_n = \{(\mathcal{T}^{(i)}_T, Y^{(i)}), i = 1, \ldots,2n\}$, which consists of $2n$ independent copies of  $(\cT_T, Y)$. For the estimation purpose, the data set $\mathcal{D}_n$ is divided into two independent data sets $\mathcal{D}_n^{(1)}$ and $\mathcal{D}_n^{(2)}$ of same size $n$. For sake of simplicity in the following, we index both sample using $\{1, \ldots, n\}$.


To take advantage of Assumption~\ref{ass:sparAss}, we then consider the following three-stages procedure.
\begin{itemize}
\item Based on the first data set $\mathcal{D}^{(1)}_n$, we estimate the distribution $p^*=(p_k^*)_{k \in [K]}$ by its empirical counterpart 
$\w{p}$. 
\item Based on the second dataset $\mathcal{D}_n^{(2)}$, and for each $k \in [K]$, we estimate by $\w{S}_k$ the active set $S_k^*$ with a Lasso-type criterion, described in Section~\ref{subsec:estimationSupport}.
\item Based on the second dataset $\mathcal{D}_n^{(2)}$, then, we build a classifier $\w{g}$ that minimizes an empirical $L_2$-risk on a set of predictors that depends on the estimated support 
$(\w{S}_k)_{k \in [K]}$. This construction is detailed in Section~\ref{subsec:ERMLasso}.
\end{itemize}


\subsection{Estimation of the active sets}
\label{subsec:estimationSupport}

Our classification procedure relies on the estimation of the active sets $S_k^*$ for all $k \in [K]$.
To this aim, we consider the least squares contrast with a Lasso penalty for repeated observations. The considered contrast is an adaptation of the penalized criteria proposed in~\cite{BGM} in the context of repeated observations with a fixed horizon time of observation $T$.

Let $k \in [K]$. Hereafter, we define the estimator of the active set $S_k^*$.
We denote $\left(\mathcal{T}_T^{(1)}, \ldots, \mathcal{T}_T^{(n_k)}\right)$ the observations from class $k$ coming from $\mathcal{D}_n^{(2)}$, 
with $n_k = \sum_{i=1}^n \one_{\{Y^{(i)}=k\}}$ the random number of observations from class $k$.
First, we define the considered contrast. To this end, we introduce the generic parameter
$\theta = \left(\theta_{1}, \ldots, \theta_{M}\right)^\prime \in \mathbb{R}^{M(M+1)}$,
such that for each $j \in [M]$, $\theta_j = (\theta_{j,\ell})_{0 \leq \ell \leq M}$ writes as
\begin{equation*}
\theta_j := \left(\mu_j, a_{j,1}, \ldots, a_{j,M}\right).    
\end{equation*}
The vector of true parameters is also denoted by $\theta^*_k = \left(\theta_{k,1}^*, \ldots, \theta^*_{k,M}\right)^\prime \in \mathbb{R}^{M(M+1)}$. For each $j \in [M]$, it expresses as follows 
\begin{equation}\label{eq:theta}
\theta_{k,j}^* = \left(\theta^*_{k,j,\ell}\right)_{l \in \left\{0,\ldots, M\right\}} := \left(\mu^*_{k,j}, a^*_{k,j,1}, \ldots, a^*_{k,j,M}\right)^\prime \in \mathbb{R}^{M+1}.
\end{equation}
Then, for each $\theta \in \mathbb{R}^{M(M+1)}$, and $i \in [n_k]$, we define the corresponding intensity function associated to the observation $\mathcal{T}_{T}^{(i)}$ that stems from class $k$ for $t \in[0,T]$ as
\begin{equation*}
\lambda^{(i)}_{k,j,\theta}(t)  = \mu_{k,j} + \sum_{j'=1}^M  a_{k,j,j'}  \sum_{T^{(i)}_{\ell} \in \mathcal{T}^{(i)}_{t,j'}} h(t-T^{(i)}_{ \ell}).
\end{equation*}

The considered penalized contrast is  defined, for each $\theta \in \R^{M(M+1)}$,
as follows,
\begin{equation}
\label{eq:contrastLasso}
R_{T,n_k}(\theta) :=  \frac{\one_{\left(n_k \geq 1\right)}}{n_k}\sum_{i=1}^{n_k} \left(\frac{1}{T}\sum_{j=1}^M \int_0^T \lambda^{(i)2}_{k,j,\theta}(t) dt-\frac{2}{T}\sum_{j=1}^M \sum_{T^{(i)}_{\ell} \in \mathcal{T}_{T,j}^{(i)}}\lambda^{(i)}_{k,j,\theta}\left(T^{(i)}_{\ell}\right)\right).  
\end{equation}
Note that if $n_k= 0$, we have $\w{\theta} = 0$. 
The Lasso estimator is then defined as
\begin{equation}\label{eq:thetahat}
\w{\theta}_k \in \argmin {\theta \in \R^{M(M+1)}} \left\{R_{T,n_k}(\theta)+ \kappa \sum_{j = 1}^M \sum_{j'=1}^M |\theta_{j,j'}|\right\}. 
\end{equation}
Finally, from the estimator $\w{\theta}_k$, we get the estimated support of $A_k^*$
\begin{equation*}
\w{S}_k = \{(j,j^{'}) \in  [M]^2, \;\; \w{\theta}_{k,j,j^{'}} \neq 0\}.
\end{equation*}
Note that $\w{S}_k$ represents the estimated active set of $A_k^*$ since it does not involve the first column of $\w{\theta}_k$ that contains the vector of estimated baseline $(\mu_j)_{j \in[M]}$.


\subsection{ERM classifier with refitting step}
\label{subsec:ERMLasso}

In this section, we present the last step of our estimation procedure, which is dedicated to the construction of the final classifier. In particular, it involves the estimation of parameter $\theta^* = \left(\theta_k^*\right)_{k \in  [K]}$.
We highlight that this step relies on the estimated support $\w{S}_k$.
To this end, we introduce the constraint set of parameters
\begin{equation*}
\Theta_n := \left\{\theta = (\mu, A) \in \mathbb{R}_{+}^{M}\times \mathbb{R}_{+}^{M^2}, \ \mu_j \in \left[\frac{1}{n}, \log(n)\right], \ j \in [M], \ \left\|A\right\|_F \leq \log(n) \right\},
\end{equation*}
and finally the set of interest
\begin{equation}\label{eq:thetaHatSet}
\w{\Theta} := \left\{\theta = \left(\theta_1, \ldots, \theta_K\right) \in \Theta_n^K, \;\; {\rm supp}(A_k)= \w{S}_k\right\}.    
\end{equation}
Several comments can be made from the definition of the set of parameters $\w{\Theta}$. First we observe that conditional on the event $\{\hat{S}_k = S_k^*\}$, for $n$ large enough, the true parameter $\theta^*$ belongs to the set $\w{\Theta}$.
Indeed, in view of Assumption~\ref{ass:mu}, for $n$ large enough,
we may assume that $1/n < \mu_0 < \mu_1 < \log(n)$, and $C_A  \leq n$. Furthermore, we emphasize that the choice of the bounds on the coefficients on the parameters of $\w{\Theta}$ allows  to get rid of the unknown constants defined in Assumption~\ref{ass:mu}.
These choices are also driven by technical aspects. In particular, such bounds are required to apply concentration arguments.  
Additionally, let us mention that contrary to the previous step, the optimization is performed on $\R_+$ for each coefficient.

Let us present the estimation of the parameter $\theta^*$ and then the construction of the resulting classifier $\w{g}$. This construction follows the strategy provided in~\cite{DDS} for $M=1$, and is based on the dataset $\mathcal{D}_n^{(2)}$.
It relies on the empirical risk minimization principle. 
Specifically, for each $\theta \in \widehat{\Theta}$, we introduce an associated score functions $f_{\theta} = (f^1_{\theta}, \ldots, f^K_{\theta})$ such that for an observed sequence of events $\mathcal{T}_T$
\begin{equation*}
f_{\theta} (\mathcal{T}_T) = 2\pi_{k,\w{p}, \theta}(\mathcal{T}_T) -1, ~k \in [K],
\end{equation*}
with 
$$
\pi_{k,\w{p}, \theta}(\mathcal{T}_T) 
 = \frac{ \w{p}_k{\rm e}^{F_{k}(\mathcal{T}_T)}}{\sum_{k'=1}^K \w{p}_{k'} {\rm e}^{F_{{k'}}(\mathcal{T}_T)}}
$$
and
\begin{equation*}
F_{k,\theta}(\mathcal{T}_T) = -\sum_{j=1}^M  \int_0^T \lambda_{k, j,\theta}(s) d s 
 + \sum_{j=1}^M \sum_{T_\ell\in \mathcal{T}_{T,j}} \log\left(\lambda_{k, j, \theta}(T_\ell )\right).
\end{equation*}
Note that the form of the score function $f_{\theta}$ is chosen according to the result provided in Proposition~\ref{prop:prop1}. 
Let $\theta \in \w{\Theta}$, and $f_{\theta}$ its associated score function, we define its empirical $L_2$-risk as
\begin{equation*}
\w{\cR}_2\left(f_{\theta}\right) := \dfrac{1}{n} \sum_{i=1}^n \sum_{k = 1}^K \left(Z_k^{(i)}-f_{\theta}^k(\cT_T^{(i)})\right)^2, \quad Z_k^{(i)} = 2\one_{\{Y_i=k\}}-1.
\end{equation*}
Then, we define the estimator of $\theta^*$ as the minimizer of the empirical $L_2$-risk, 
\begin{equation}\label{eq:thetaERM}
\w{\theta}^{\rm R} \in \argmin{\theta \in \w{\Theta}} \w{\cR}_2(f_{\theta}) .
\end{equation}
From the estimator $\w{\theta}^{\rm R}$ of parameter $\theta^*$, we define the \texttt{ERMLR} classifier as follows
\begin{equation}\label{eq:classifERM}
\w{g}(\mathcal{T}_T) \in  \argmax{k \in \mathcal{Y}} \pi_{k,\w{p}, \w{\theta}^{\rm R}}(\mathcal{T}_T) 
\end{equation}
Note that for computational purpose, as it is usual in classification, the $0-1$ loss is then replaced by the $L_2$ convex surrogate \citep[see \textit{e.g}][]{Zhang04}.  
In particular, the $L_2$-loss is classification calibrated and
Zhang's lemma~\cite{Zhang04} ensures that 
\begin{equation*}
{\E}\left[\cR(\w{g})-\cR(g^*)\right] \leq \frac{1}{\sqrt{2}} \big( \E \left[ \cR_2(f_{\w{\theta}^{\rm R}})- \cR_2(f_{\theta^*}) \right]\big)^{1/2},    
\end{equation*}  
with $\cR_2$ the oracle counterpart of the considered empirical risk $\w{\cR}_2$ defined as
\begin{equation*}
\cR_2(f_{\theta}) = \E\left[\left(Z_k- f_{\theta}(\cT_T)\right)^2\right], \;\; {\rm with}\;\; Z_k = 2\one_{\{Y=k\}}-1.   
\end{equation*}

One of the main appealing property of our classification algorithm is that we take advantage of the estimated support to perform the minimization of the empirical $L_2$-risk on a set of parameter whose dimension is much smaller than $M^2$. Besides, rather than using the estimated parameters obtained at the first step (Lasso-step), we consider the estimator of parameter $\theta^*$ as the minimizer of loss adapted to our multiclass classification setting.


\section{Theoretical results}
\label{sec:theo}

In this section, we first provide the consistency of the estimator of the active set in Section~\ref{subsec:LassoResult}. Then, in Section~\ref{subsec:convclassif}, we derive the rate of convergence of our classification procedure with respect to the misclassification risk. 


\subsection{Support recovery for classification}
\label{subsec:LassoResult}

In this section, we present the key result of the Lasso procedure. More precisely, we show that
\begin{equation*}
\mathbb{P}\left(\hat{S}_k = S_k^* \right) \rightarrow 1, \;\; n \rightarrow \infty,
\end{equation*}
which implies that for each $k \in [K]$, the Lasso estimator $\hat{\theta}_k$ solution of Equation~\eqref{eq:thetahat} has nonzero entries at the same positions as the true parameter 
$\theta_k^*$. In particular, for the multivariate Hawkes process, for $j,j' \in [M]^2$, the Lasso step can be interpreted as interaction selection, where the objective is to select whether a component $j$ is impacted by a component $j'$.

Before, to give our main result, 
we introduce some notations for the Lagrangian version of the Lasso criterion given in Equation \eqref{eq:thetahat}.


\paragraph{Notations.}

In the rest of the section, we fix a class $k \in [K]$, and for simplicity we drop the dependency on $k$. Besides, throughout this section, we work conditional on the event $n_k \geq 1$. We also remind the reader that $n_k$ is the random number of observations from class $k$ in the dataset $\mathcal{D}_n^{(1)}$ of size $n$.
Then, we define for each $t \in (0,T]$ the random matrix 
$\mathbb{H}_t \in \mathbb{R}^{n_k \times (M+1)}$ as follows
\begin{equation}\label{eq:H}
(\mathbb{H}_t)_{i,j}  = H_j^{(i)}(t),  \;\; {\rm with} \;\;  H^{(i)}_{j}(t):=\int_0^t h(t-s)\dd N^{(i)}_{j}(s),~ j \neq 0,~ H_0^{(i)} \equiv 1.
\end{equation}
From the definition of the matrix $\mathbb{H}_t$, we observe that 
\begin{equation*}
\lambda^{(i)}_{j,\theta}(t)= \sum_{j'=0}^M H^{(i)}_{j'}(t) \theta_{j,j'} ,
\end{equation*}
in other words,
\begin{equation*}
\mathbb{H}_t \theta_j  = \left(\lambda_{j,\theta}^{(i)}(t)\right)_{i =1, \ldots,n_k}.
\end{equation*}
For $j \in [M]$, and $i \in \{1,\ldots,n_k\}$, we consider
$M^{(i)}_j$ the martingale associated to the counting process $N^{(i)}_j$ through the Doob-Meyer decomposition. 
We then denote $\dd M(t) = \left(\dd M_j^{(i)}(t)\right)_{j,i} \in \mathbb{R}^{M \times n_k}$, and define the random martingale matrix $Z$  as
\begin{equation*}
Z:= \int_{0}^T \left(\dd M(t) \mathbb{H}_t\right)^\prime. 
\end{equation*}
Besides, the $j$-th column of $Z$ is denoted by $Z_j$. 
Therefore, for $j,j^{'} \in [M] \times \{0, \ldots,M\}$, the main term of $Z$ is the continuous-time martingale,
\begin{equation}
\label{eq:Zjj}
Z_{j,j^{'}} = \sum_{i=1}^{n_k} \int_{0}^T
H_{j^{'}}^{(i)}(t) \dd M_{j}^{(i)}(t).
\end{equation} 
We finally define the random matrix $\mathbb{H}$ of size $(M+1) \times (M+1)$ as
$$\mathbb{H} := \dfrac{1}{T} \int_{0}^T \mathbb{H}_t^\prime\mathbb{H}_t \dd t.$$

In the following, for $S \subset [M]$, we denote $\mathbb{H}_{S,S}$ the matrix where the lines and the columns are restricted to the set $S$. 


\paragraph*{Assumptions.}

Classical conditions in the $\ell_1$ constraint framework are considered as, for instance, in~\cite{VDG} and references therein.
According to Equation~\eqref{eq:theta}, the true parameter is denoted $\theta_{j}^* = \left(\theta^*_{j,\ell}\right)_{\ell \in \left\{0,\ldots, M\right\}}$, and 
$\theta_{j}^*=\left(\mu_{j}^*, a^*_{j,1}, \ldots, a^*_{j,M}\right)^\prime \in \mathbb{R}^{M+1}$. For each $j \in [M]$, we also denote $S_{{\theta_j}}^{*}$  
the active set of $\theta_j^*$. Note that, since $\mu_j^* > 0$, it contains at least one element.

The first assumption is the mutual incoherence, which is also referred as irrepresentability condition. Heuristically, this imposes that the correlation between the non-active and active variables must not be higher than the variations within the actives variables, otherwise the lasso estimator would not be able to dissociate them. It involves an incoherence parameter $\gamma \in (0,1]$ that must not be too small.

\begin{ass}[Mutual incoherence (MI)]\label{ass:MI}
There exists some $0 < \gamma \leq 1$ such that, a.s.
\begin{equation*}
\max_{j=1, \ldots, M}\|\mathbb{H}_{S_{\theta_j}^{*c}, S_{\theta_j}^*}  \mathbb{H}_{S_{\theta_j}^{*}, S_{\theta_j}^*}^{-1}\|_{\infty} \leq 1- \gamma.
\end{equation*}
\end{ass}

The following condition ensures that the submatrix $\mathbb{H}_{S_{\theta_j}^{*}, S_{\theta_j}^*}$ does not have its columns linearly dependent (in which case it could be impossible to estimate $\theta^*$ when the true active set is known). The notation $\Lambda_{\min}$ denotes the minimal eigenvalue.

\begin{ass}[Minimum eigenvalue (ME)]\label{ass:ME}
There exists $\Lambda_0 > 0$ such that, a.s.,
\begin{equation*}
\min_{j=1, \ldots, M}\Lambda_{\min}\left(\dfrac{\mathbb{H}_{S_{{\theta_j}}^{*}, S_{\theta_j}^*}}{n_k}\right) \geq \Lambda_0.
\end{equation*}
\end{ass}

Finally, the last condition of minimum signal ensures that the non-zero entries of the true coefficients are large enough to be properly estimated. Specifically, it imposes that the minimum value
of the true parameter restricted to the support $S^*$
cannot decay to zero faster than the regularization parameter, 
$\kappa$ which is specified in Theorem~\ref{prop:convsupport}.

\begin{ass}[Minimum signal condition (MS)]\label{ass:MS}
\begin{equation*}
\min_{j,j^{'} \in S^*} \left|\theta^*_{j,j^{'}}\right| > \Lambda_0 \max_{j=1, \ldots, M}
\sqrt{\left|S_{\theta_j}^*\right|} \dfrac{\log^4(nM^2)}{\sqrt{n}}.
\end{equation*}
\end{ass}


\paragraph*{Support recovery result.}

The result provided by Theorem~\ref{prop:convsupport} is the main ingredient to derive rate of convergence of our classification procedure. Nevertheless, it is an interesting result {\it per se}. 
Under the above assumptions, for each class $k \in [K]$, we establish the uniqueness of the Lasso solution, the consistency of the estimated support, and  the uniform consistency of the estimator of $\theta^*$.

\begin{theo}\label{prop:convsupport}
Assume that $n > \frac{2}{p_0}$, and let $\kappa  =  \dfrac{\log^4(nM^2)}{\sqrt{n}}$. Grant Assumptions~(MI), (ME), and (MS). 
There exists an event $\Omega_{n}$ with $\P(\Omega_{n}) \geq 1-\dfrac{C}{n}$, on which $n_k \geq 1$, and
\begin{equation*}
\min_{\theta \in \R^{M(M+1)}}  \left\{R_{T,n_k}(\theta) + \kappa \sum_{j = 1}^M \sum_{j'=1}^M |\theta_{j,j'}|  \right\},
\end{equation*}
where $R_{T, n_k}$ is given in Equation \eqref{eq:contrastLasso}, admits a unique solution $\w{\theta}$ which satisfies the following
\begin{enumerate}[label=(\roman*)]
\item ${\rm supp}(\w{\theta}) = {\rm supp}(\theta^*)$;
\item \[
\left\|\w{\theta} - \theta^*\right\|_{\infty}  \leq \dfrac{\Lambda_0\max_{j=1, \ldots, M}\sqrt{|S_{\theta_j}^*|}\log^4(nM^2)}{\sqrt{n}}.
\]
\end{enumerate}
\end{theo}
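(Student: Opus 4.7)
The overall strategy is the primal--dual witness (PDW) construction, specialized to the multivariate Hawkes setting. Since $R_{T,n_k}$ in Equation~\eqref{eq:contrastLasso} is quadratic in $\theta$ and the $\ell_1$ penalty decouples across the $M$ rows $\theta_j=(\mu_j,a_{j,1},\ldots,a_{j,M})$, the Lasso problem~\eqref{eq:thetahat} splits into $M$ independent row-wise problems. Using the Doob--Meyer decomposition $\dd N^{(i)}_j(t)=\lambda^{(i)}_{j,\theta^*}(t)\dd t+\dd M^{(i)}_j(t)$ together with the quantities $\mathbb{H}$ and $Z_j$ from \eqref{eq:H}--\eqref{eq:Zjj}, the row-$j$ KKT condition for any minimizer $\w\theta_j$ rewrites as
\begin{equation*}
\mathbb{H}\,(\w\theta_j - \theta^*_j) \;=\; \frac{1}{T}Z_j \;-\; \frac{n_k\kappa}{2}\,\w s_j,
\end{equation*}
where $\w s_j\in\partial\|\w\theta_j\|_1$ with $\w s_{j,0}=0$ (the baseline $\mu_j$ is not penalized). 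The remainder of the argument is organized around this identity.

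\emph{Step 1: A high-probability event.} Define $\Omega_n=\Omega_n^{(1)}\cap\Omega_n^{(2)}$, where on $\Omega_n^{(1)}$ we have $n_k\geq p_0 n/2\geq 1$ (which holds with probability at least $1-Ce^{-cn}$ by Bernstein for $n_k\sim\mathrm{Binomial}(n,p_k^*)$, using Assumption~\ref{ass:prob} and $n>2/p_0$), and on $\Omega_n^{(2)}$ we have $\max_{j,j'}|Z_{j,j'}|\leq C\sqrt{n}\log^4(nM^2)$. The control of $Z_{j,j'}$ uses Bernstein's inequality for continuous-time martingales. Its predictable variation $\langle Z_{j,j'}\rangle_T=\sum_i\int_0^T H^{(i)2}_{j'}(t)\lambda^{(i)}_{j,\theta^*}(t)\dd t$ is bounded via Assumption~\ref{ass:mu} and the uniform-in-$M$ exponential moment Assumption~\ref{ass:expomoments}; a truncation of $\sup_{t\leq T}H^{(i)}_{j'}(t)$ at a $\log^2(nM^2)$-level controls the jump sizes and produces the fourth logarithmic power. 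A union bound over the $M(M+1)$ entries costs only $\log M = O(\log n)$ (since $M/n\to 0$), absorbable into the $\log^4$ factor. Assumption~\ref{ass:ME} is assumed almost surely and thus holds on $\Omega_n$.

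\emph{Step 2: PDW, uniqueness, and support inclusion.} On $\Omega_n$, build a candidate by solving the row-$j$ problem restricted to vectors supported on $S^*_{\theta_j}$; by (ME) this restricted quadratic is strictly convex, so its minimizer $\tilde\theta_{j,S^*_{\theta_j}}$ is unique and satisfies
\begin{equation*}
\tilde\theta_{j,S^*_{\theta_j}}-\theta^*_{j,S^*_{\theta_j}}=\bigl(\mathbb{H}_{S^*_{\theta_j},S^*_{\theta_j}}\bigr)^{-1}\Bigl[\tfrac{1}{T}Z_{j,S^*_{\theta_j}}-\tfrac{n_k\kappa}{2}\tilde s_{j,S^*_{\theta_j}}\Bigr].
\end{equation*}
Extending $\tilde\theta_j$ by zeros off $S^*_{\theta_j}$ and reading off $\tilde s_{j,S^{*c}_{\theta_j}}$ from the KKT equation on $S^{*c}_{\theta_j}$, Assumption~\ref{ass:MI} yields
\begin{equation*}
\|\tilde s_{j,S^{*c}_{\theta_j}}\|_\infty\leq (1-\gamma)+\tfrac{2(2-\gamma)\|Z_j\|_\infty}{n_k\kappa T}.
\end{equation*}
On $\Omega_n$ the second term is $O(\log^4(nM^2)/(\sqrt n\,\kappa))=O(1)$ and can be made strictly below $\gamma$ by tuning the constant inside $\kappa$, giving strict dual feasibility. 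Combined with strict convexity on $S^*_{\theta_j}$, this delivers uniqueness of the full Lasso solution $\w\theta_j=\tilde\theta_j$ and the inclusion $\mathrm{supp}(\w\theta_j)\subseteq S^*_{\theta_j}$. Then (ME) applied to the oracle formula above yields
\begin{equation*}
\|\w\theta_j-\theta^*_j\|_\infty\leq \|\w\theta_j-\theta^*_j\|_2\leq \frac{\sqrt{|S^*_{\theta_j}|}}{n_k\Lambda_0}\Bigl(\frac{\|Z_j\|_\infty}{T}+\frac{n_k\kappa}{2}\Bigr),
\end{equation*}
which, on $\Omega_n$, is of the claimed order $\sqrt{|S^*_{\theta_j}|}\log^4(nM^2)/\sqrt n$ (absorbing $\Lambda_0$-dependent constants as in the theorem statement). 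Finally, Assumption~\ref{ass:MS} forces $|\w\theta_{j,j'}|>0$ on $S^*$, giving the reverse inclusion and $\mathrm{supp}(\w\theta)=\mathrm{supp}(\theta^*)$. A Bernstein bound on the Bernoulli $n_k$ and on the martingale entries gives $\P(\Omega_n)\geq 1-C/n$.

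\emph{Main obstacle.} The delicate step is the martingale concentration in Step~1: the intensity $\lambda^*_j(t)$ is unbounded and can a priori grow with $M$. Assumption~\ref{ass:expomoments} (uniform-in-$M$ exponential moments) is the key tool enabling a dimension-free Bernstein bound, and the truncation argument required to apply it is exactly what introduces the $\log^4$ factor in both the sup-norm rate and the minimum-signal condition (MS).
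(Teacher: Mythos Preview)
Your proposal is correct and follows the same primal--dual witness route as the paper. One small correction on Step~1: the martingale concentration (obtained in the paper via the data-driven Bernstein inequality of \cite{BGM}, Theorem~4, combined with the event $\{\max_{i,j}N_j^{(i)}(T)\leq\log^{5/3}(nM)\}$ from Assumption~\ref{ass:expomoments}) actually delivers $n_k^{-1}\max_{j,j'}|Z_{j,j'}|\leq C\log^{3}(nM^2)/\sqrt n$, and the fourth logarithm in $\kappa$ is a deliberate slack inserted so that $\|Z\|_\infty/(n_k\kappa)\to 0$ and hence $\|\w z_{S^{*c}}\|_\infty<1$ for $n$ large, rather than an artifact of the truncation itself.
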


Several comments can be made from the above result.
First,  a straightforward consequence of Theorem~\ref{prop:convsupport}, is that for each $k \in [K]$,
\begin{equation*}
\mathbb{P}\left(\hat{S}_k = S_k^*\right) \geq 1-\dfrac{C}{n}.
\end{equation*}
Hence, our result provides rate of convergence for the estimator of the support $\hat{S}_k$.
Furthermore, in view of Assumption~\ref{ass:MS}, we have that on the event $\Omega_n$, $\hat{\theta}_{j,j'} > 0$. 
Notably, Theorem~\ref{prop:convsupport} extends the result of~\cite{BGM} in the context of repeated observations with fix observation time. In particular, the work of~\cite{BGM} does not provide support recovery result. However, we emphasize that our result requires stronger assumption than in~\cite{BGM}. 
Let us notice that the result holds also for $\kappa $ larger that $ {\log^4(nM^2)}/{\sqrt{n}}$ but in this case the rates of convergence is slower.

Second, up to logarithmic factor, the condition on the tuning parameter $\kappa= \kappa_n$ is of the same order as in \cite{wainwright2009sharp}. Besides, up to a logarithmic factor,  we obtain a rate of convergence of order $\max_{j=1, \ldots, M}|\sqrt{S_{\theta_j}^*|}/\sqrt{n}$ in sup-norm for the estimator $\hat{\theta}$, we can note that this rate is of the same order than the one that would expect in the classical Gaussian framework~\cite{VDG}. 
We also highlight that in the logarithmic factor, the power of the log term is in part due to the fact that the number of jump-times of the process is not bounded a.s.

Finally, the proof of this result is based on a preliminary lemma, which gives a control in probability of the maximum of the martingale terms $Z_{j,j'}$ defined in Equation~\eqref{eq:Zjj}.
This inequality is obtained using a Bernstein type inequality proven in~\cite{BGM}. This data-driven inequality and the sub-exponential property of the counting process (see Assumption~\ref{ass:expomoments}) lead to the concentration result. Then, we follow the primal-dual-witness method of proof \citep[see for instance][]{tibshirani2015sparsity}. 


\subsection{Rate of convergence of the \texttt{ERMLR} classifier}\label{subsec:convclassif}

In this section, we derive theoretical property of the \texttt{ERMLR}
algorithm $\hat{g}$. To establish our result, we take advantage of the support recovery result provided in Section~\ref{subsec:LassoResult}. On the set $\{\hat{S}_k = S_k^*\}$, the excess risk of $\hat{g}$ is upper-bounded by applying classical arguments derived from the classification framework. While we use Theorem~\ref{prop:convsupport} to bound the excess risk on the event
$\{\hat{S}_k \neq S_k^*\}$.
Then, we obtain the following result.

\begin{theo}\label{thm:riskERMsupport}
Grant Assumptions~\ref{ass:prob}, \ref{ass:h} and \ref{ass:mu}. For $n$ large enough,
there exists a constant $C>0$ such that,
\begin{equation*}
\E\left[\cR(\w{g}) - \cR(g^*)\right] \leq C \left(\frac{ (M+s^*) \log(nM)}{n}\right)^{1/2},
\end{equation*}
where $C$ depends on $T, K, \|h\|_\infty, \mu_0, \mu_1, p_0$.
\end{theo}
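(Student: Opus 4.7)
The plan is to combine Zhang's surrogate inequality (stated just before Equation~\eqref{eq:classifERM}) with a decomposition based on the support-recovery event
$$
\Omega^S := \bigcap_{k\in[K]} \{\w S_k = S_k^*\}.
$$
By Theorem~\ref{prop:convsupport} and a union bound, $\P(\Omega^{S,c}) \leq CK/n$. Since $f_\theta$ takes values in $[-1,1]^K$, the excess $L_2$-risk is uniformly bounded, so the contribution of $\Omega^{S,c}$ to $\E[\cR_2(f_{\w\theta^{\rm R}}) - \cR_2(f_{\theta^*})]$ is $O(K/n)$, absorbed into the final rate. Separately, $\w p$ is a multinomial empirical average, so $\|\w p - p^*\|_\infty = O_\P(n^{-1/2})$, and a Lipschitz argument in the softmax defining $\pi_{k,\w p,\theta}$ reduces the problem to comparing $L_2$-risks of score functions that use the same weights.

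On $\Omega^S$, and for $n$ large enough so that the bounds in $\Theta_n$ envelope $(\mu_0,\mu_1,C_A)$ from Assumption~\ref{ass:mu}, the true parameter $\theta^*$ lies in $\w\Theta$. The ERM definition of $\w\theta^{\rm R}$ then yields $\w{\cR}_2(f_{\w\theta^{\rm R}}) \leq \w{\cR}_2(f_{\theta^*})$, and the standard symmetrization gives
$$
\cR_2(f_{\w\theta^{\rm R}}) - \cR_2(f_{\theta^*}) \leq 2 \sup_{\theta \in \w\Theta} \big|\w{\cR}_2(f_\theta) - \cR_2(f_\theta)\big| .
$$
On $\Omega^S$, the set $\w\Theta$ embeds in $\R^{K(M+s^*)}$ (since $A_k$ is constrained to have support $\w S_k = S_k^*$ with $|S_k^*|\leq s^*$, plus $M$ baselines per class), with every coordinate in $[0,\log n]$. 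Thus its $\epsilon$-covering number is at most $(C\log n / \epsilon)^{K(M+s^*)}$. For each fixed $\theta$, $\w{\cR}_2(f_\theta)-\cR_2(f_\theta)$ is a centered empirical mean of bounded summands, so a Bernstein-type inequality yields subgaussian concentration at scale $n^{-1/2}$. Choosing $\epsilon$ polynomially small in $n,M$ and combining this pointwise bound with the union bound over the cover and the Lipschitz continuity of $\theta\mapsto f_\theta(\cT_T)$ gives
$$
\sup_{\theta \in \w\Theta} \big|\w{\cR}_2(f_\theta) - \cR_2(f_\theta)\big| = O_\P\!\left( \sqrt{\frac{(M+s^*)\log(nM)}{n}} \right).
$$
Taking expectation and applying Zhang's inequality extracts the outer square root but preserves the rate: $\E[\cR(\w g) - \cR(g^*)] \leq C\big((M+s^*)\log(nM)/n\big)^{1/2}$, with $K$ absorbed into $C$.

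The main obstacle is controlling the Lipschitz constant of $\theta \mapsto f_\theta(\cT_T)$ on $\w\Theta$ uniformly over the realization of $\cT_T$. The delicate piece is $\sum_j \sum_{T_\ell \in \cT_{T,j}} \log \lambda_{k,j,\theta}(T_\ell)$, where $\lambda_{k,j,\theta}$ involves sums of $H_{j'}^{(i)}(t)$ that are not bounded a.s. The constraint $\mu_{k,j} \geq 1/n$ inside $\Theta_n$ keeps $|\log \lambda_{k,j,\theta}| \lesssim \log n$, while Assumption~\ref{ass:expomoments} provides the high-probability polynomial bound on $\sum_j N_j(T)$ and on $\int_0^T H_{j'}^{(i)}(t)\,dt$. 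These two ingredients together force the Lipschitz constant to be polylogarithmic in $n$ and polynomial in $M$, which is exactly why the logarithmic factor in the final rate inflates from $\log n$ to $\log(nM)$. With those bounds in hand, the chaining argument of the previous paragraph proceeds routinely, and the three error contributions (bad support-recovery event, $\|\w p - p^*\|_\infty$, and the uniform empirical deviation on $\w\Theta$) combine to yield the announced bound.
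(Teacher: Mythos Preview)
Your decomposition and the covering-number bookkeeping are fine, but the argument loses a full power of $n$ at the very end and therefore does \emph{not} yield the stated rate. Zhang's inequality, as quoted in the paper just before Equation~\eqref{eq:classifERM}, reads
\[
\E\left[\cR(\w g)-\cR(g^*)\right] \leq \tfrac{1}{\sqrt{2}}\Big(\E\left[\cR_2(f_{\w\theta^{\rm R}})-\cR_2(f_{\theta^*})\right]\Big)^{1/2}.
\]
Your uniform-deviation bound $\sup_{\theta\in\w\Theta}|\w\cR_2(f_\theta)-\cR_2(f_\theta)|=O_{\P}\big(\sqrt{(M+s^*)\log(nM)/n}\big)$ is the standard \emph{slow} rate, and plugging it in gives only
\[
\E\left[\cR(\w g)-\cR(g^*)\right] \leq C\Big(\tfrac{(M+s^*)\log(nM)}{n}\Big)^{1/4},
\]
not the $1/2$ exponent claimed in the theorem. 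The sentence ``extracts the outer square root but preserves the rate'' is exactly where the error is hiding: applying a square root to a quantity of order $n^{-1/2}$ produces $n^{-1/4}$, not $n^{-1/2}$.

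What is missing is a \emph{fast-rate} argument for the $L_2$ excess risk itself, i.e.\ you need $\E[\cR_2(f_{\w\theta^{\rm R}})-\cR_2(f_{\theta^*})]\leq C(M+s^*)\log(nM)/n$ \emph{without} the square root. The paper obtains this via the self-bounding variance structure of the squared loss: for any $f$, ${\rm Var}\big(\ell_f(Z,\cT)-\ell_{f^*}(Z,\cT)\big)\leq C_K\big(\cR_2(f)-\cR_2(f^*)\big)$. This lets one replace the crude symmetrization $\cR_2(\w f)-\cR_2(f_{\tilde\theta})\leq 2\sup_\theta|\w\cR_2-\cR_2|$ by the sharper device $D_{\w f}\leq D_{\w f}-2\w D_{\w f}$ (valid since $\w D_{\w f}\leq 0$), bound the latter over the $\varepsilon$-net via Bernstein with the variance controlled by $D_f+\Delta_n$, and integrate to get $\E[\max_{\theta\in\cM_\varepsilon}(D_{f_\theta}-2\w D_{f_\theta})]\leq C\log|\cM_\varepsilon|/n$. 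That is the step your proposal skips; without it the claimed exponent cannot be recovered from a uniform-deviation bound alone.
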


As expected, we 
highlight that, thanks to the Lasso step, we manage to obtain, up to a logarithmic factor, a rate of order $\sqrt{(M+s^*)/n}$ rather than $\sqrt{M+M^2)/n}$. Notably, we show that the proposed algorithm achieves the usual parametric rate.


\section{Implementation}
\label{sec:implem}

In this section, a comprehensive description of the implementation details is specified. As the \texttt{ERMLR} procedure execution involves two minimization problems, these two steps are described separately in Section~\ref{subsec:implemLasso} and Section~\ref{subsec:implemERM}. In both cases, each choice is discussed in terms of the state of the art and its relevance in the context of its use. Besides, let us highlight that the implementation of the procedure relies on state-of-the art algorithms and \texttt{C++} codes wrapped in \texttt{Python} which serves the purpose of rapid computation. 

\subsection{Implementation details for the Lasso step}
\label{subsec:implemLasso}

For the support recovery step, our strategy consists in the minimization of the least squares contrast with Lasso penalty defined in Equation~\eqref{eq:thetahat}. This objective function is written as the sum of two functions. While the least squares contrast is differentiable, convex and smooth (\textit{i.e.}\,with a Lipschitz continuous gradient), the $\ell_1$-norm is non-differentiable at zero. To this extent, to carry out the minimization of such objective function, we use first-order optimization algorithm based on proximal methods with Nesterov's momentum method, namely \texttt{FISTA}, see \cite{beck2009}. Compared to the classical proximal algorithm, the construction of a new iterate of the descent is based on a specific linear combination of the previous two points. This makes \texttt{FISTA} benefits from a significantly faster rate of convergence.
A recommended choice of the descent step is $1/L$ with $L$ the Lipschitz constant of the gradient.
We stop the descent after $200$ iterations if the stopping criterion, based on relative distance between two successive iterations, is not fulfilled yet.

Another important aspect of the Lasso step concerns the calibration of the penalization constant $\kappa$ which controls the regularization.
As our goal is to recover the true support, $\kappa$ must be large enough to set all non-active coefficients to zero. To this end, our strategy is the following: different values of $\kappa$ are explored through a grid of sufficiently fine size, denoted $\Delta$, and the one that minimizes a specific model selection criterion is chosen. The criterion used here is the Extended Bayesian Information Criteria (\texttt{EBIC}) introduced by \cite{chen2008}. For some $\gamma \in [0, 1]$ and $\kappa \in \Delta$, this criterion takes the following form:
$$\texttt{EBIC}_{\gamma}(\kappa) := -2 L_{T, n}\left(\w{\theta}(\kappa)\right) + \left\lvert S_{\w{\theta}(\kappa)} \right\rvert \log(n) + 2\gamma\log\left(\binom{M^2}{\left\lvert S_{\w{\theta}(\kappa)} \right\rvert}\right)$$
where $\w{\theta}({\kappa})$ is the Lasso estimated with the tuning parameter $\kappa$, $L_{T, n}$ is the $\log$-likelihood of the model, $\left\lvert S_{\w{\theta}(\kappa)} \right\rvert$ is the size of this support, namely the number of active coefficients of $\w{\theta}(\kappa)$.

Compared to a classical \texttt{BIC} criteria (namely $\gamma=0$), an additional penalization is added to 
take into account the number of possible active sets of the same size. As this quantity is also increasing with this size, it seems to be very relevant in a high-dimensional setting. In the following, we 
choose $\gamma=1$ and $\lvert \Delta \rvert = 40$ as exploration grid size. 

Finally, let us highlight that for both the least squares contrast and the $\log$-likelihood functional, computation such as gradient or loss evaluation are optimized and implemented in \texttt{C++} which serves the purpose of rapid computation. 


\subsection{Implementation details for the ERM step}
\label{subsec:implemERM}

For the classification step, our strategy consists in minimizing the convexified empirical risk defined in Equation~\eqref{eq:classifERM}. According to the definition of the constraint set of parameters defined in Equation~\eqref{eq:thetaHatSet}, each coefficient must be positive. To ensure that each coefficient $a_{k,j,j'}$ remains in $[0,c]$, with $c>0$, the minimization is done under inequality constraints and we use a projected gradient descent algorithm. Nevertheless, since this objective function is non-smooth and non-convex {\it w.r.t.} to the coefficients, its minimization requires particular care. 
In particular, the tuning of the step-size in the descent is very tricky \footnote{Furthermore, classical method such as backtracking line-search with Armijo-Wolfe condition cannot be used due to the piece-wise constant nature of the projection operator (see \cite{ferry2023}).}.
On the other hand, adaptive gradient methods, such as \texttt{AdaGrad} \citep[see][]{duchi2011}, have been widely used in large-scale optimization due to their ability to adjust the step size for each feature according to the geometry of the problem. In practice, \texttt{AdaGrad} is known to be an efficient method in non-convex setting (in particular for training deep neural networks optimization, see \cite{gupta2014}). In addition, some theoretical guarantees for the convergence of \texttt{AdaGrad} for non-convex functions have been provided in the literature \citep[see][]{ward2020,wang2023}.
With this in mind, we use a parameter-free projected adaptive gradient descent method, in the inspiration of \texttt{AdaGrad}, called \texttt{Free AdaGrad} and introduced in \cite{chzhen2023}. 
Compared with the classical algorithm, its main advantage lies in the fact that it is adaptive to the distance between the initialization and the optimum, and to the sum of the square norm of the gradients.
The initial starting point is chosen as the estimate given by the Lasso step, the initial guess for the distance between the starting point and the optimum is taken as $\gamma_0=0.1$ and we stop the descent after $1000$ iterations if the stopping criteria described before is not fulfilled yet. 

\section{Numerical results}
\label{sec:NumExp}
 
The goal of this section is to investigate the performance of our method from a numerical standpoint using synthetic data. First, in Section~\ref{subsec:benchmark}, alternative strategies are proposed for comparison purpose. Then, the simulation and evaluation scheme is thoroughly detailed in Section~\ref{subsec:simulScheme} and in Section~\ref{subsec:evalScheme}. Finally, the obtained results, for support recovery by the Lasso step in Section~\ref{subsec:resultsLasso} and the classification procedure performance in Section~\ref{subsec:resultsClassif} are presented. 

\subsection{Benchmark}
\label{subsec:benchmark}

Let us detail here the different competitors which are compared with our classifier. 

\paragraph{Simple plug-in strategy.}

A full plug-in strategy consists in use the estimators $\w{\theta}$ of the parameters, obtained by minimizing the least-squares contrast with Lasso penalty on the adjacency matrix given in Equation~\eqref{eq:thetahat}. Then, we plug $\hat{\theta}$ into the Bayes classifier formula. Consequently, the resulting classifier for a new observation $\cT_T$ is
$$\w{g}_{\w{p}, \w{\theta}}(\cT_T) = \argmax{k \in \cY} \frac{\w{p}_k e^{F_{\w{\mu}_k, \w{A}_k}(\cT_T)}}{
\sum_{k'=1}^K \w{p}_{k'} e^{F_{\w{\mu}_{k'}, \w{A}_{k'}}(\cT_T)}
},
$$
where $\w{p}$ is the estimated distribution of $Y$. This classifier is learned on the entire training sample $\mathcal{D}_n$ of size $2n$. This classifier is referred as \texttt{PI}.

\paragraph{Oracle on estimated support.}

We are also interested in another predictor, referred to as \texttt{OES} for \textit{The Oracle on Estimated Support}, which is defined as follows 
$$\w{g}_{\w{p}, \theta^*_{\w{S}}}(\cT_T) \in \argmax{k \in \mathcal{Y}} \pi_{k,\w{p}, \theta^*_{\w{S}}}(\mathcal{T}_T) .$$
where 
\begin{equation*}
(\theta^*_{\w{S}})_{k,j,j'} := \begin{cases}
\theta^*_{k,j,j'} & \text{if} \ (j,j') \in \w{S}_k \\
0 & \text{otherwise}
\end{cases}.
\end{equation*}
It corresponds to the best possible predictor that relies on the support recovered in the Lasso step. Note that if the true support is recovered by the Lasso step, then it exactly corresponds to the Bayes rule. By taking into account this predictor, we can quantify the effect of poor support recovery in terms of classification error, while evaluating the gain that could be obtained by an \texttt{ERM} step. 


\subsection{Simulation scheme}
\label{subsec:simulScheme}

In this section, we give some details on the panel of scenarios on which our Lasso estimator and our classifier are evaluated. 

\paragraph{MHP path generation.}

Concerning synthetic data generation, each path is simulated using cluster representation algorithm (see \cite{moller2005}). This sampling procedure relies on the branching structure of the MHP, that can be viewed as Poisson cluster process. We consider the classical choice of exponential kernel $h(s) = \beta\exp(-\beta s)$ with $\beta=3$.

\paragraph{Scenarios.}

We consider two scenarios, referred to as \textit{Scenario 1} and \textit{Scenario 2}. In both scenarios, different structures of the interaction matrix $A^*$ are explored. In \textit{Scenario 1}, $A^*$ is chosen to be a diagonal block matrix. In addition to self-exciting interaction, the block structure models interaction between a group of connected components. Coefficient values, which gives the intensity of influence, are the same within each block, but vary from one to another. For a larger value of $M$, the blocks are expanded so that the parsimony rate remains the same for each value of $M$.
In \textit{Scenario 2}, the coefficients are chosen randomly with different values. Due to the randomness of the choice of the active set, the diagonal coefficients may be all set to zero. Thus, there may be no self-excitation
in this case.
In both scenarios, the vector of exogenous intensity $\mu^*$ is chosen as constant for each component, meaning that spontaneous events occur in the same way for each individual. In Figure~\ref{fig:exemplesA}, a visual representation of theses scenarios, for $M=25$ is given in the form of a heat map. In particular, the values of the coefficients of the matrix $A^*$ are given by the color bar. We precise also the sparsity rate, which is the $\%$ of zero-coefficients in the matrix, i.e. $|S^{*c}|/M^2$. 

\begin{figure}[hbtp]
    \centering
    \begin{subfigure}{0.45\textwidth}
        \centering
        \resizebox{0.9\textwidth}{!}{\includegraphics[scale=0.6]{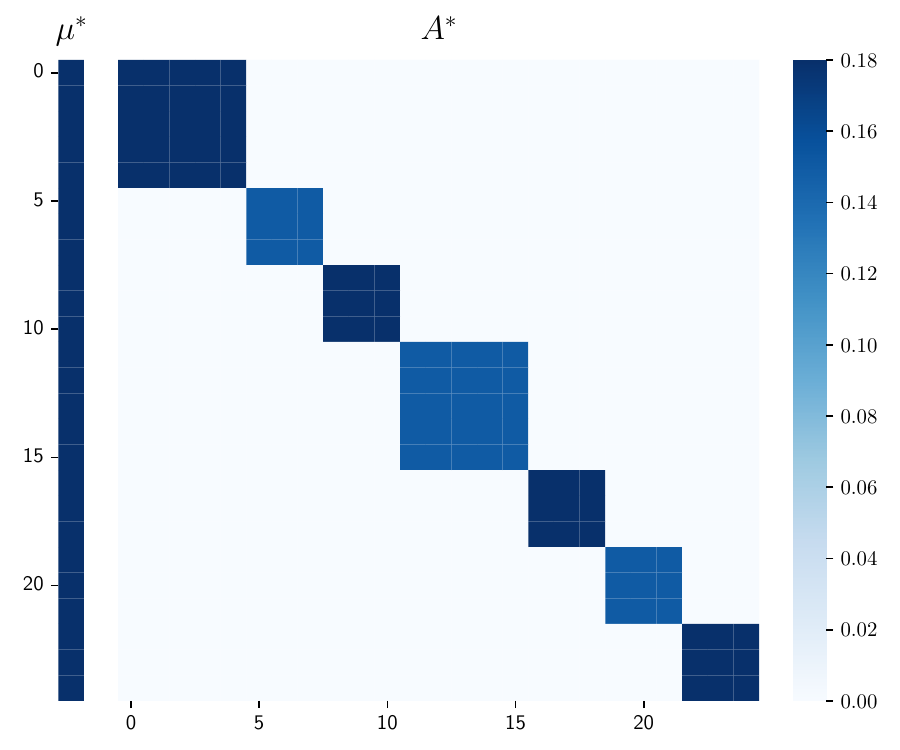}}
        \caption{\textit{Scenario 1}}
    \end{subfigure}
    \hfill
    \begin{subfigure}{0.45\textwidth}
        \centering
        \resizebox{0.9\textwidth}{!}{\includegraphics[scale=0.6]{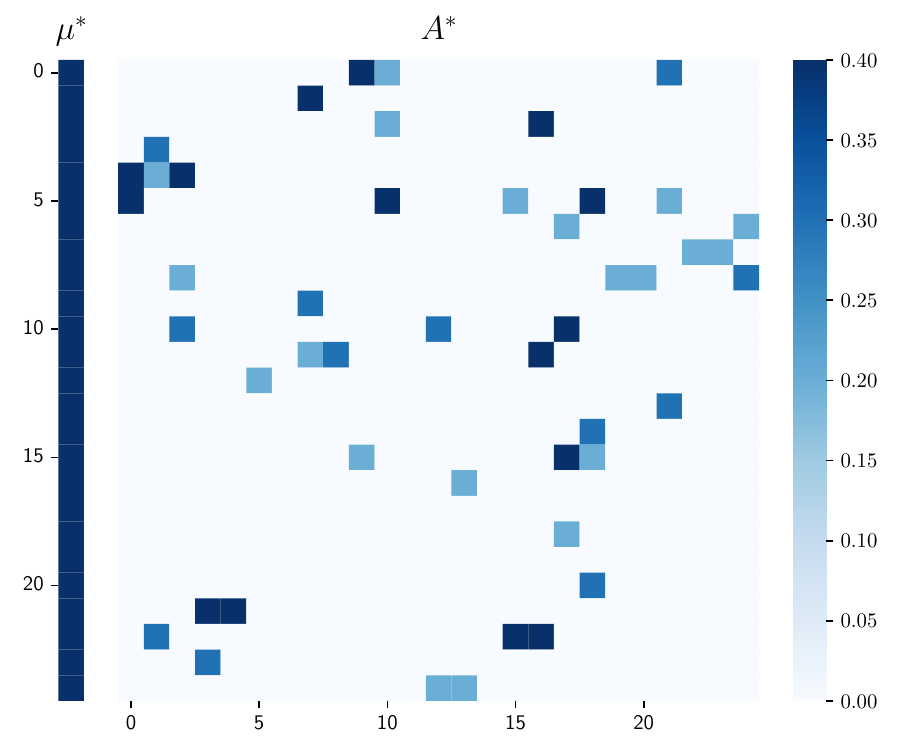}}
        \caption{\textit{Scenario 2}}
    \end{subfigure}
\caption{Visualization of $\theta^* = (\mu^*, A^*)$ in both scenarios for $M=25$. The exogenous intensity for each component: 0.4. Sparsity rate of $A^*$ in \textit{Scenario 1}: 85\%; in \textit{Scenario 2}: 92\%.}
\label{fig:exemplesA}
\end{figure}

To illustrate the classification task, we consider the $3$-classes classification setting, i.e. $K=3$. The three classes are created on the basis of the two scenarios described above. For \textit{Scenario 1}, the blocks of different size are interchanged, as well as the values of the coefficients within them. For \textit{Scenario 2}, based on the same support for each class, the values of the coefficient are interchanged. In both cases, the resulting classes are quite balanced and close from each other. Finally, the exogenous intensity is chosen to be the same for each of the three classes.
In Table~\ref{tab:scenariosclassif}, we give for each scenario, the values of the Frobenius norm, the spectral radius and the sparsity rate as a function of the dimension and of the label. For terminology convenience, we refer to the classification scenario resulting from \textit{Scenario 1} (resp. \textit{Scenario 2}) as \textit{Scenario 1} (resp. \textit{Scenario 2}). 

\begin{table}[hbtp]
    \centering
    \resizebox{0.9\textwidth}{!}{
    \begin{tabular}{|c|c|ccc|ccc|}
    \hline
 \multirow{2}{*}{} & \multirow{2}{*}{}
    & \multicolumn{3}{c |}{Scenario 1} 
    & \multicolumn{3}{c |}{Scenario 2} \\
  &  & $k=1$ & $k=2$ & $k=3$ & $k=1$ & $k=2$ & $k=3$\\
    \hline
  \multirow{3}{*}{M=10} &  $\|A_k^*\|_F$ & 1.37 & 1.37 & 1.39 & 1.44 & 1.54 & 1.31\\
   & $\rho(A_k^*)$ & 0.76 & 0.76 & 0.76 & 0.00 & 0.00 & 0.00 \\
  &  $|S_k^{*c}|$ & 0.86 & 0.86 & 0.86 & 0.89 & 0.89 & 0.89 \\    \hline
\multirow{3}{*}{M=25}   & $\|A_k^*\|_F$ & 1.63 & 1.63 & 1.68 & 2.07 & 2.25 & 2.07 \\
 &   $\rho(A_k^*)$ & 0.90 & 0.90 & 0.90 & 0.50 & 0.55 & 0.44 \\
 &   $|S_k^{*c}|$ & 0.85 & 0.85 & 0.85 & 0.92 & 0.92 & 0.92\\
    \hline 
\multirow{3}{*}{$M=50$}&    $\|A_k^*\|_F$ & 1.52 & 1.52 & 1.52 & 2.55 & 2.77 & 2.42 \\
  &  $\rho(A_k^*)$ & 0.90 & 0.90 & 0.90 & 0.68 & 0.74 & 0.63 \\
 &   $|S_k^{*c}|$ & 0.85 & 0.85 & 0.85 & 0.94 & 0.94 & 0.94\\
    \hline
    \end{tabular}}
    \caption{Presentation of the different scenarios with $K=3$. For each class $k$, the Frobenius norm, the spectral radius and the sparsity rate of $A^*_k$ are specified.}
    \label{tab:scenariosclassif}
\end{table}


\subsection{Evaluation scheme}
\label{subsec:evalScheme}

Hereafter, we present the evaluation scheme that relies on Monte-Carlo repetitions. We fix $T=5$, and ${p}^*\sim \mathcal{U}_{[3]}$. For each scenario described, each value of $M\in\{10, 25, 50\}$, and each value of $n\in\{300, 600, 1500\}$, we 
repeat independently $30$ times the following steps.
\begin{enumerate}
    \item Simulate the data set $\cD_{n_{\textrm{train}}}$ and $\cD_{n_{\textrm{test}}}$;
    \item Based on $\cD_{n_{\textrm{train}}}$, for each $k=1, \dots, K$ compute $\w{p}_k=\frac{1}{n}\sum_{i=1}^{n}\one_{\left\{Y^i=k\right\}}$;
    \item Based on $\cD_{n_{\textrm{train}}}$, Lasso step:
    \begin{enumerate}
    \item For each $k \in [K]$, calibrate the penalization constant using $\texttt{EBIC}_1$ criteria by exploring values in the grid $\Delta$. For each $\kappa \in \Delta$ do:
    \begin{enumerate}
        \item using \texttt{FISTA}, compute $\w{\theta}_k$ the Lasso estimate with tuning parameter $\kappa$;
        \item based on $\w{\theta}_k$, compute $\texttt{EBIC}_{1}(\kappa)$;
    \end{enumerate}
    and choose $\w{\kappa}_k \in \argmin {\kappa \in \Delta} \texttt{EBIC}_{1}(\kappa)$;
    \item Given $(\w{\kappa}_k)_{k \in \mathcal{Y}}$, for each $k=1, \dots, K$ do:
    \begin{enumerate}
        \item using \texttt{FISTA}, compute the Lasso estimates $\w{\theta}_k$ with tuning parameter $\w{\kappa}_k$;
        \item get the estimated support $\w{S}_k = \left\{(j,j^{'}) \in [M], \;\; \w{\theta}_{k,j,j^{'}} \neq 0\right\}.$
    \end{enumerate}
    \item From $(\w{S}_k)_{k \in \mathcal{Y}}$ compute the classifier $\w{g}_{\texttt{OES}}$, from $(\w{\theta}_k)_{k \in \mathcal{Y}}$ compute the classifier $\w{g}_{\texttt{PI}}$
    \end{enumerate}
    \item For one arbitrary class $k \in \mathcal{Y}$, assess the quality of the support recovery using Hamming distance and $\ell_2$ distance defined as
    \begin{equation*}
        d_H\left(A_k^*, \w{A}_k\right) = \frac{1}{M^2}\sum_{j,j'=1}^M \mathds{1}_{\left\{A_{k,j,j'}^* \neq \widehat{A}_{k,j,j'}\right\}}, \;\textrm{ and }\;
        d_{\ell_2}\left(A_k^*, \w{A}_k\right) = \sqrt{\sum_{j,j'=1}^M {\lvert A_{k,j,j'}^* - \widehat{A}_{k,j,j'} \rvert}^2};
    \end{equation*}
    \item From $\cD_{n_{\textrm{train}}}$, perform the \texttt{ERM} step:
    \begin{enumerate}
        \item starting from $(\w{\theta}_k)_{k \in \mathcal{Y}}$ as the initial point, we minimize the $L_2$-risk defined in Equation \eqref{eq:thetaERM} using \texttt{Free AdaGrad} to obtain $(\w{\theta}^{\rm R}_k)_{k \in \mathcal{Y}}$;
        \item from $\w{\theta}^{\rm R}$ and $\w{p}$ we build the classifiers $\w{g}_{\texttt{ERMLR}}$.
    \end{enumerate}
    \item Based on $\cD_{n_{\textrm{test}}} = \{(\mathcal{T}_T^{(i)}, Y^i), i = 1, \ldots, n_{{\rm test}}\}$,
 evaluate the error rate of the classifiers $\texttt{PI}$  and $\texttt{ERMLR}$ using
    \begin{equation*}
    {\rm Err}_{\texttt{PI}} = \dfrac{1}{n_{\rm test}} \sum_{i =1}^{n_{\rm test}} \mathds{1}_{\{\w{g}_{\texttt{PI}}(\cT_T^i) \neq Y^i\}}, \;\textrm{ and }\; {\rm Err}_{\texttt{ERMLR}} = \dfrac{1}{n_{\rm test}} \sum_{i =1}^{n_{\rm test}} \mathds{1}_{\{\w{g}_{\texttt{ERMLR}}(\cT_T^i) \neq Y^i\}};
    \end{equation*}
    \end{enumerate}


\subsection{Numerical Results for support recovery}
\label{subsec:resultsLasso}

This section is devoted to the discussion of the obtained results of the Lasso procedure. These results are provided in Table~\ref{tab:evalLasso}, in Table~\ref{tab:evalLasso2}, in Figure~\ref{fig:visualrecoverysupport} and in Figure~\ref{fig:Lassoexecutiontime}.

\begin{table}[hbtp]
    \centering
    \resizebox{\columnwidth}{!}{%
    \begin{tabular}{ | c | c | c | c | c | c | c | c | c | c | c | c |}
        \hline
        & \multirow{2}{*}{M}& \multicolumn{3}{c |}{$d_H$}    
        & \multicolumn{3}{c |}{$d_{\ell_2}$}\\
        \cline{3-8}
        & & $n=100$ & $n=500$ & $n=1000$ & $n=100$ & $n=500$ & $n=1000$\\
        \hline
        \multirow{3}{*}{\textit{Scenario 1}} 
        & 10 & 0.04 (0.03) & 0.02 (0.02) & 0.02 (0.02) & 0.39 (0.07) & 0.18 (0.04) & 0.13 (0.02) \\ 
        & 25 & 0.04 (0.01) & 0.03 (0.01) & 0.03 (0.01) & 0.91 (0.07) & 0.40 (0.04) & 0.29 (0.02) \\ 
        & 50 & 0.11 (0.01) & 0.07 (0.00) & 0.07 (0.00) & 1.80 (0.12) & 1.60 (0.02) & 1.64 (0.02) \\ 
        \hline
        \multirow{3}{*}{\textit{Scenario 2}} 
        & 10 & 0.04 (0.02) & 0.03 (0.02) & 0.03 (0.02) & 0.43 (0.07) & 0.20 (0.03) & 0.14 (0.02) \\ 
        & 25 & 0.03 (0.01) & 0.03 (0.01) & 0.03 (0.01) & 0.96 (0.11) & 0.44 (0.04) & 0.32 (0.04) \\ 
        & 50 & 0.03 (0.00) & 0.02 (0.00) & 0.01 (0.00) & 1.76 (0.09) & 0.94 (0.07) & 0.68 (0.04)\\ 
        \hline
    \end{tabular}%
    }
    \caption{Lasso results over 30 Monte-Carlo repetitions for both scenarios for three value of $M$. The impact of $n$ is investigated. The standard deviation is provided between parentheses. $T=5$}
    \label{tab:evalLasso}
\end{table}

\begin{figure}[hbtp]
    \centering
     \resizebox{1.0\textwidth}{!}{\includegraphics[scale=0.6]{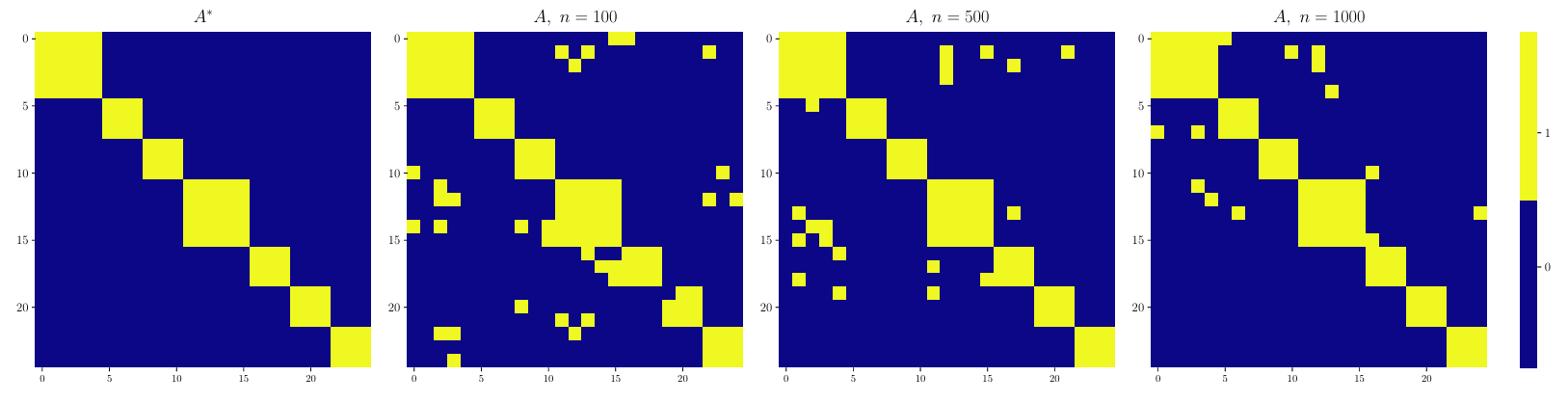}}
\caption{True support ${\rm supp}(\theta^*)$ and recovered support ${\rm supp}(\hat{\theta})$ in \textit{Scenario 1}. }
\label{fig:visualrecoverysupport}
\end{figure}

First, from Table~\ref{tab:evalLasso}, we can see that Hamming distance between the true support and the estimated one is close to zero in all settings, therefore our procedure is able to correctly recover the active set of $A^*$. This remains true even for small values of $n$ and for high-dimensional networks, for which the Hamming distance is quite small. 
As expected, the larger $n$ is, the better the support is reconstructed, whether in terms of Hamming distance or $\ell_2$ distance. Thus, in addition to reconstructing the support more accurately, a gain is also made in terms of point parameter estimation, illustrating the theoretical result of support consistency and convergence of the associated estimator established in Section~\ref{sec:theo}. In particular, for large value of $M$, such as $M=50$, a clear decrease in the Hamming distance is noticeable for increasing values of $n$. 
Finally, it is worth emphasizing that, in the case of \textit{Scenario 1}, the Lasso procedure is successful in recovering the underlying block structure of the interaction matrix $A^*$. This assertion is supported by the Figure~\ref{fig:visualrecoverysupport}, which visually shows the convergence of the support to the actual structure as the number of observations increases.  

\begin{figure}[hbtp]
    \centering
    \resizebox{0.5\textwidth}{!}{\includegraphics[scale=0.6]{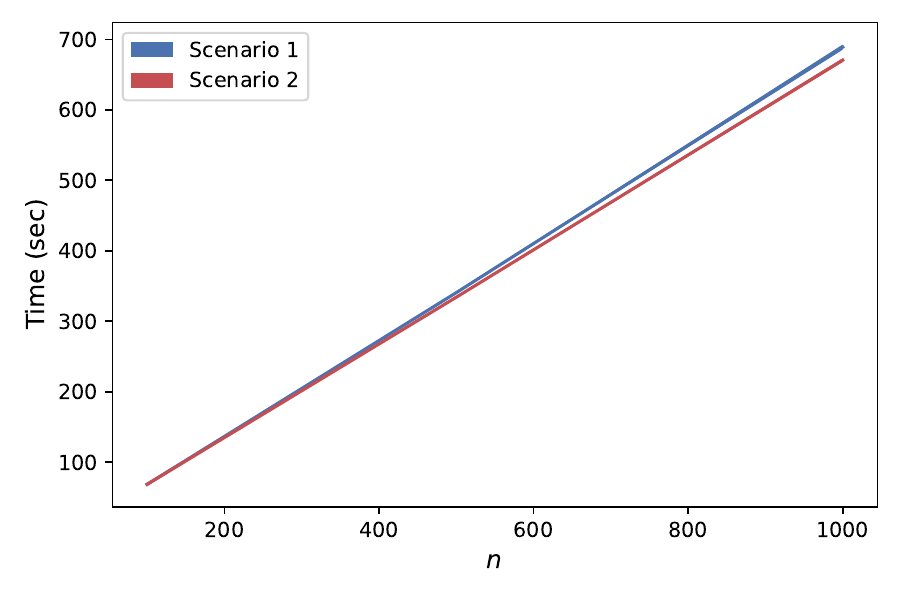}}
    \caption{Average execution time over $30$ repetitions of the entire Lasso procedure as a function of $n$ for \textit{Scenario 1} with $M=25$. The standard deviation is shown in shaded fill on either side of the curve.}
    \label{fig:Lassoexecutiontime}
\end{figure}

\begin{table}[hbtp]
    \centering
    \resizebox{\columnwidth}{!}{%
    \begin{tabular}{ | c | c | c | c | c | c | c | c | c | c | c | c |}
        \hline
        & \multirow{2}{*}{M}& \multicolumn{3}{c |}{\# events}    
        & \multicolumn{3}{c |}{time (sec)}\\
        \cline{3-8}
        & & $n=100$ & $n=500$ & $n=1000$ & $n=100$ & $n=500$ & $n=1000$\\
        \hline
        \multirow{3}{*}{\textit{Scenario 1}} 
        & 10 & 6330 (197) & 32079 (398) & 63905 (666) & 8.37 (0.09) & 42.12 (0.35) & 82.42 (0.53) \\ 
        & 25 & 14221 (397) & 71226 (773) & 142073 (1789) & 68.67 (0.46) & 340.62 (1.36) & 688.99 (3.70) \\ 
        & 50 & 12993 (513) & 64743 (839) & 129969 (1252) & 447.22 (4.21) & 2290.80 (22.63) & 4519.32 (49.73) \\ 
        \hline
        \multirow{3}{*}{\textit{Scenario 2}} 
        & 10 & 4692 (125) & 23367 (250) & 46570 (402) & 8.17 (0.09) & 39.81 (0.34) & 80.35 (0.58) \\ 
        & 25 & 9524 (147) & 47651 (354) & 95737 (632) & 68.64 (0.22) & 334.13 (1.30) & 670.51 (2.67) \\ 
        & 50 & 12363 (254) & 62228 (575) & 124604 (1142) & 459.69 (1.36) & 2268.71 (7.69) & 4522.17 (16.79)\\ 
        \hline
    \end{tabular}%
    }
    \caption{Number of observed events, average execution time over 30 Monte-Carlo repetitions for both scenarios. The standard deviation is provided between parentheses. $T=5$}
    \label{tab:evalLasso2}
\end{table} 

Now let us discuss the computational cost of our procedure. It can be seen from Figure~\ref{fig:Lassoexecutiontime} and Table~\ref{tab:evalLasso2} that the execution time of the entire procedure is quite reasonable, even for a large value of $M$. It is important noting that the execution time also includes the choice of $\kappa$ with the \texttt{EBIC} criterion, and this with a grid of fine size $\lvert \Delta \rvert = 40$. Thus, we can afford to explore with great precision and still have a relatively short execution time. 
For comparison purposes, it is worth noting that, as we are dealing with short-time path repetitions, our observations would be equivalent to a unique path of horizon time of $n \times T$. Finally, as our procedure benefits from fast computational properties, it appears therefore realistic to apply it to large-scale networks. This could be a matter of great interest for real-world applications, which often involve a network of huge dimension. 


\subsection{Numerical results for classification}
\label{subsec:resultsClassif}
This section is devoted to the discussion of the obtained results of the \texttt{ERMLR} procedure. These results are provided in Table~\ref{tab:evalERMLR}, and Figure~\ref{fig:ERMLRErrorrate}.

\begin{table}[hbtp]
    \begin{subtable}[c]{1.0\textwidth}
    \centering
    \resizebox{0.9\columnwidth}{!}{%
    \begin{tabular}{ | c | c | c | c || c | c |}
        \hline
        & M & Bayes & \texttt{OES} & \texttt{PI} & \texttt{ERMLR} \\
        \hline
        \multirow{3}{*}{\textit{Scenario 1}} 
        & 10 & 0.134 (0.005) & 0.135 (0.006) & 0.155 (0.007) & \textbf{0.152} (0.007) \\ 
        & 25 & 0.087 (0.004) & 0.107 (0.013) & 0.143 (0.011) & \textbf{0.134} (0.011) \\
        & 50 & 0.092 (0.005) & 0.313 (0.05) & \textbf{0.218} (0.020) & 0.219 (0.019) \\
        \hline
        \multirow{3}{*}{\textit{Scenario 2}} 
        & 10 & 0.251 (0.007) & 0.255 (0.008) & \textbf{0.276} (0.012) & 0.278 (0.010) \\ 
        & 25 & 0.237 (0.008) & 0.260 (0.014) & \textbf{0.316} (0.016) & 0.326 (0.012) \\
        & 50 & 0.246 (0.008) & 0.406 (0.031) & \textbf{0.391} (0.026) & 0.410 (0.032) \\
        \hline
    \end{tabular}%
    }
    \caption{$n=300$}
    \end{subtable}
    \\
    \begin{subtable}[c]{1.0\textwidth}
    \centering
    \resizebox{0.9\columnwidth}{!}{%
    \begin{tabular}{ | c | c | c | c || c | c |}
        \hline
        & M & Bayes & \texttt{OES} & \texttt{PI} & \texttt{ERMLR} \\
        \hline
        \multirow{3}{*}{\textit{Scenario 1}} 
        & 10 & 0.135 (0.006) & 0.135 (0.006) & 0.146 (0.007) & \textbf{0.144} (0.008) \\ 
        & 25 & 0.086 (0.004) & 0.087 (0.004) & 0.118 (0.005) & \textbf{0.113} (0.005) \\
        & 50  & 0.091 (0.004) & 0.189 (0.021) & 0.183 (0.008) & \textbf{0.179} (0.009) \\
        \hline
        \multirow{3}{*}{\textit{Scenario 2}} 
        & 10 & 0.247 (0.008) & 0.248 (0.008) & \textbf{0.260} (0.009) & 0.262 (0.008) \\ 
        & 25 & 0.236 (0.007) & 0.237 (0.008) & 0.276 (0.010) & \textbf{0.276 }(0.010) \\
        & 50 & 0.245 (0.008) & 0.309 (0.014) & \textbf{0.333} (0.016) & 0.349 (0.020) \\
        \hline
    \end{tabular}%
    }
    \caption{$n=600$}
    \end{subtable}
    \\
    \begin{subtable}[c]{1.0\textwidth}
    \centering
    \resizebox{0.9\columnwidth}{!}{%
    \begin{tabular}{ | c | c | c | c || c | c |}
        \hline
        & M & Bayes & \texttt{OES} & \texttt{PI} & \texttt{ERMLR} \\
        \hline
        \multirow{3}{*}{\textit{Scenario 1}} 
        & 10 & 0.135 (0.005) & 0.136 (0.005) & 0.139 (0.005) & \textbf{0.139 (0.006)} \\ 
        & 25 & 0.087 (0.006) & 0.087 (0.005) & 0.100 (0.006) & \textbf{0.098} (0.006) \\
        & 50 & 0.093 (0.005) & 0.183 (0.08) & 0.179 (0.07) & \textbf{0.173} (0.08) \\
        \hline
        \multirow{3}{*}{\textit{Scenario 2}} 
        & 10 & 0.253 (0.009) & 0.253 (0.009) & \textbf{0.257} (0.009) & 0.259 (0.010) \\ 
        & 25 & 0.236 (0.009) & 0.236 (0.009) & \textbf{0.253} (0.008) & 0.254 (0.008) \\
        & 50 & 0.247 (0.008) & 0.251 (0.009) & \textbf{0.293} (0.012) & 0.296 (0.011) \\
        \hline
    \end{tabular}%
    }
    \caption{$n=1500$}
    \end{subtable}
    \caption{Empirical error over 30 Monte-Carlo repetitions for each classifier in the three scenarios for three values of $M$. The impact of $n$ is investigated. The standard deviation is provided between parentheses. The value of $n_{\rm test}=3000$ is chosen. $T=5$}
\label{tab:evalERMLR}
\end{table}

\begin{figure}[hbtp]
    \centering
    \resizebox{0.5\textwidth}{!}{\includegraphics[scale=0.6]{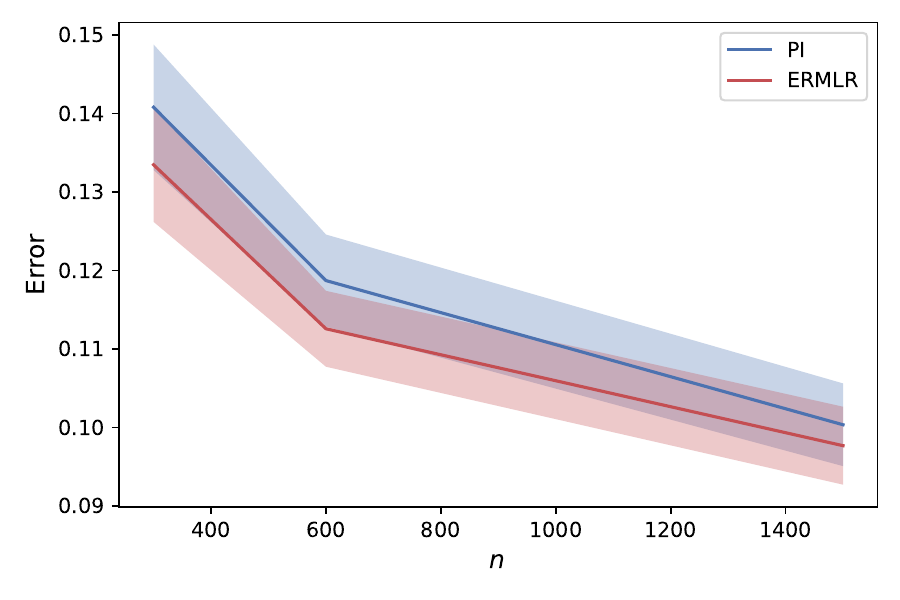}}
    \caption{Averaged Error rate over $30$ repetitions of the $\texttt{ERMLR}$ and $\texttt{PI}$ procedure as a function of $n$ for \textit{Scenario 1} with $M=25$. The standard deviation is shown in shaded fill on either side of the curve.}
    \label{fig:ERMLRErrorrate}
\end{figure}

First, from Table~\ref{tab:evalERMLR}, we can see that the \texttt{ERMLR} is close to the Bayes classifier in terms of error rate, in both scenarios and for each value of $M$. In particular, note that for $n=1500$, its error rate is almost equal to that of the Bayes classifier. In fact, as expected the greater the number of data, the closer the classifier comes to the Bayes classifier, which illustrate the consistency of the \texttt{ERMLR} procedure established in Section~\ref{sec:theo}. This decreasing tendency of the error rate of \texttt{ERMLR} is illustrated in \textit{Scenario 1} with $M=25$ in Figure~\ref{fig:ERMLRErrorrate}. 

Another important point is the comparison with the \texttt{PI} classifier as a benchmark. Overall, it can be seen that the \texttt{PI} exhibits good performance. This can be explained by the fact that recovering the true support structure is sufficient for accurate class prediction. On the other hand, poor support recovery also impacts the performance of the \texttt{ERMLR} predictor. This gap can be quantified with the \texttt{OES} oracle classifier,  which gives the gain that could be obtained by an
\texttt{ERM} step. For these reasons, it is not expected to see a big gap between the two. Nevertheless, it is worth noting that, in case of \textit{Scenario 1}, a significant gain by the \texttt{ERM} refitting step can be observed. This assertion is supported by Figure~\ref{fig:ERMLRErrorrate}, where it can be seen that the \texttt{ERMLR} classifier is better in terms of error rate. This suggests that, for some particular structures, a refitting step leading to a finer point estimate of the parameters is relevant and leads to better performance. 


\section{Discussion}
\label{sec:Dicussion}

In the present work, we propose a novel classification algorithm tailored to classify Multivariate Hawkes Processes paths in high-dimension. 
For each class, a first step is dedicated to the sparse estimation of  the support of the adjacency matrix. Then, in a second step, we build a classifier that takes of advantage of the estimated support. Specifically, the resulting classifier is based on the minimization of a \texttt{ERM} criterion.
We establish rates of convergence for both estimated support and 
classification algorithm. Finally, we illustrate the numerical performance of our procedure through a comprehensive simulation study.

A possible guideline for further research is to consider a more challenging model by including inhibition interaction. From a theoretical aspect, it may be tricky since adding inhibition effect induces complication due to the non linearity of the underlying intensity  function. In particular, providing a closed form of the compensator is a key aspect to compute the least-square contrast or the likelihood function. The work of~\cite{bonnet2022} and~\cite{bonnet2023inference}
should form a theoretical basis for this future work.
From a practical point of view, 
a procedure which is able to deal with inhibition, may be applied to generalize the work of~\cite{DDBLSB}. 
Indeed, the use of MHP allows to model simultaneously different species echolocation calls and then the effects of inter-species cooperation. Furthermore,  adding inhibition effects, potentially translates the ecological aspect of inter-species competition.

Another direction could be to investigate a penalized \texttt{ERM} classifier. It would allow to deal with the high-dimensional setting without the prior Lasso step. Indeed,
this procedure relies on a global penalized criterion dedicated to the classification task. This direction is left for further investigations. 

Finally, from a practical standpoint, \texttt{sparkle}, a full Python library for Hawkes process inference in high-dimension and classification is in development. It consists in a toolkit for Hawkes process modeling which relies on \texttt{C++} codes wrapped in \texttt{Python} for fast computation. 

\subsection*{Acknowledgements}
This work has been supported by the Chaire \enquote{Modélisation Mathématique et Biodiversité} of Veolia-École polytechnique-Museum national d’Histoire naturelle-Fondation X, through a Ph.D. scholarship.
The project is also part of the 2022 DAE 103 EMERGENCE(S) - PROCECO project supported by Ville de Paris.
Finally, the authors thank Vincent Rivoirard for fruitful discussions.

\vskip 0.2in

\begin{appendices}

\textit{This appendix gathers the proofs of the theoretical results of the paper.
It is organized as follows.
Appendix~\ref{app:techRes} provides useful technical results.
The proof of the closed-form expression of the Bayes classifier is established in Appendix~\ref{app:Bayes}.
The proof of the support recovery result is given in Appendix~\ref{app:supportRec}.
Finally, the rate of convergence of the \texttt{ERMLR} algorithm is proved in
Appendix~\ref{app:RatesofCve}.}

\textit{Throughout the proofs, the notation $C$ refers to a generic positive constant, which may differ from line to line. In particular, this generic constant $C$ does not depend on $n$ or on the dimension $M$. However, it may depend on the other parameters. For the sake of simplicity we denote $\cT$ for $\cT_T$.}

\section{Technical results}
\label{app:techRes}

\setcounter{lemme}{0}
\renewcommand{\thelemme}{A.\arabic{lemme}}
\setcounter{prop}{0}
\renewcommand{\theprop}{A.\arabic{prop}}

\begin{prop}
\label{prop:excessrisk}
For any classifier $g \in \mathcal{G}$, we have
\begin{equation*}
\cR(g)-\cR(g^*) =\mathbb{E}\left[\sum_{1 \leq i \neq k \leq K} |\pi^*_i(\cT)-\pi^*_k(\cT)|\one_{\{g^*(\cT)=i, g(\cT)=k\}}\right].    
\end{equation*}
\end{prop}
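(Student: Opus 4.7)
The plan is to begin from the formula $\cR(g) = \mathbb{P}(g(\cT) \neq Y)$ and rewrite it via conditioning on $\cT$. Using the tower property and the definition $\pi^*_k(\cT) = \mathbb{P}(Y=k \mid \cT)$, one obtains
\[
\cR(g) = \mathbb{E}\big[\mathbb{E}[\one_{\{g(\cT) \neq Y\}} \mid \cT]\big] = \mathbb{E}\big[1 - \pi^*_{g(\cT)}(\cT)\big],
\]
where $\pi^*_{g(\cT)}(\cT)$ denotes the conditional probability evaluated at the (data-dependent) index $g(\cT)$. Applying this identity both to $g$ and to $g^*$ and subtracting yields
\[
\cR(g) - \cR(g^*) = \mathbb{E}\big[\pi^*_{g^*(\cT)}(\cT) - \pi^*_{g(\cT)}(\cT)\big].
\]

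Next, I would partition $\cT$ according to the joint values of $(g^*(\cT), g(\cT)) \in [K]^2$. Inserting the decomposition $1 = \sum_{i,k} \one_{\{g^*(\cT)=i,\, g(\cT)=k\}}$ gives
\[
\cR(g) - \cR(g^*) = \mathbb{E}\left[\sum_{1 \le i,k \le K} \big(\pi^*_i(\cT) - \pi^*_k(\cT)\big)\, \one_{\{g^*(\cT)=i,\, g(\cT)=k\}}\right].
\]
Diagonal terms $i=k$ vanish, so the sum reduces to $i \neq k$.

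Finally, I would use the defining property of the Bayes rule: on the event $\{g^*(\cT)=i\}$, by Proposition~1 we have $\pi^*_i(\cT) = \max_{k' \in [K]} \pi^*_{k'}(\cT) \ge \pi^*_k(\cT)$ for every $k$. Therefore $\pi^*_i(\cT) - \pi^*_k(\cT) \ge 0$ on $\{g^*(\cT)=i\}$ and may be replaced by $|\pi^*_i(\cT) - \pi^*_k(\cT)|$ without altering the expression, which yields the stated identity.

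I do not anticipate a genuine obstacle: the argument is a standard classification-theoretic identity, and its only subtle point is to be careful that the two classifiers depend on the same random object $\cT$, so that the partition $\{g^*(\cT)=i,\, g(\cT)=k\}$ is indeed a partition of the sample space up to the measurability of $g$ and $g^*$, both of which are ensured by $g \in \mathcal{G}$ and by the measurable selection in the $\argmax$ defining $g^*$.
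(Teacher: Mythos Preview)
Your proof is correct and follows essentially the same approach as the paper: compute $\cR(g)=1-\mathbb{E}[\pi^*_{g(\cT)}(\cT)]$, subtract to obtain $\mathbb{E}[\pi^*_{g^*(\cT)}-\pi^*_{g(\cT)}]$, partition according to the joint values of $(g^*(\cT),g(\cT))$, and use the maximizing property of $g^*$ to insert the absolute value. Your write-up is in fact more explicit than the paper's, which compresses the last two steps into a single sentence.
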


\begin{proof}
This result is established by \cite{DDS}.
Let $g \in \mathcal{G}$ a classifier. We observe that
\begin{equation*}
\mathcal{R}(g) = \mathbb{E}\left[\one_{\{g(\cT) \neq Y\}}\right] = 1- \mathbb{E}\left[\one_{\{g(\cT) = Y\}}\right]  =1-\mathbb{E}\left[\pi^*_{g(\cT)}\right].  
\end{equation*}
Therefore, from the above equation and the definition of the Bayes classifier $g^*$, we get
\begin{equation*}
\cR(g)-\cR(g^*) = \mathbb{E}\left[\left|\pi^*_{g^*(\cT)}- \pi^*_{g(\cT)}\right|\right].
\end{equation*}
Since for each $g \in \mathcal{G}$, $g(\cT) = \sum_{k=1}^K k \one_{\{g(\cT) =k\}}$,
the above equation yields the result.
\end{proof}

\begin{lemme}
\label{lem:boundInftyMatrix}
Let $A \in \mathbb{R}^{d \times d}$ (symmetric), and $X \in \mathbb{R}^d$.
Then,
\begin{equation*}
\left\|AX\right\|_{\infty} \leq \sqrt{d} \rho(A) \left\|X\right\|_{\infty}    
\end{equation*}
\end{lemme}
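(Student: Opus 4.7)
The plan is to chain together three standard inequalities linking the $\ell^\infty$ norm, the $\ell^2$ norm, and the spectral (operator) $\ell^2$ norm of $A$. Specifically, for any vector $v \in \mathbb{R}^d$, one has $\|v\|_\infty \leq \|v\|_2 \leq \sqrt{d}\,\|v\|_\infty$; and for any $d\times d$ matrix, $\|Av\|_2 \leq \|A\|_2\,\|v\|_2$ by definition of the subordinate norm.

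Applying these to $v = AX$ and then bounding $\|X\|_2$ by $\sqrt{d}\,\|X\|_\infty$, I would obtain
\begin{equation*}
\|AX\|_\infty \;\leq\; \|AX\|_2 \;\leq\; \|A\|_2\,\|X\|_2 \;\leq\; \sqrt{d}\,\|A\|_2\,\|X\|_\infty .
\end{equation*}
It then remains to replace $\|A\|_2$ by $\rho(A)$, which is precisely the place where the symmetry hypothesis enters: for a real symmetric matrix the spectral theorem gives $A = Q\,\mathrm{diag}(\lambda_i)\,Q^\top$ with $Q$ orthogonal, hence $\|A\|_2 = \max_i |\lambda_i| = \rho(A)$. (The paper also recalls $\rho(A) \leq \|A\|_2$ in the notations; the reverse inequality requires symmetry.)

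There is no real obstacle: the argument is a one-line chain of textbook norm inequalities, the only substantive ingredient being the identity $\|A\|_2 = \rho(A)$ for symmetric $A$. No use of the probabilistic machinery of the paper is needed.
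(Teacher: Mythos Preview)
Your argument is correct: the chain $\|AX\|_\infty \le \|AX\|_2 \le \|A\|_2\|X\|_2 \le \sqrt{d}\,\|A\|_2\|X\|_\infty$ together with $\|A\|_2=\rho(A)$ for symmetric $A$ gives exactly the claim. The paper states this lemma without proof, so there is nothing to compare against; your derivation is the standard and natural one.
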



\begin{lemme}[Hoeffding]
\label{lem:Hoeffding}
Let $B\sim \mathcal{B}(n,p)$, with $p \in (0,1)$. We then have for all $t >0$ and $n > \frac{t}{p}$,  
\begin{equation*}
\mathbb{P}(B \leq t) \leq \exp{\left( -2n(p-t/n)^2\right)} .
\end{equation*}
\end{lemme}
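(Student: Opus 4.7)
The claim is the standard lower-tail Chernoff--Hoeffding bound for a Binomial random variable, so the plan is to reduce it to Hoeffding's inequality for bounded independent variables. First I would write $B = \sum_{i=1}^n X_i$ where the $X_i$ are i.i.d.\ Bernoulli$(p)$ (hence take values in $[0,1]$ with mean $p$). The condition $n > t/p$ is exactly what ensures that the target deviation $np - t$ is strictly positive, so the event $\{B \leq t\}$ is genuinely a lower-tail large deviation from the mean $np$.

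The main step is to rewrite
\[
\mathbb{P}(B \leq t) = \mathbb{P}\!\left(\sum_{i=1}^n (X_i - p) \leq -(np - t)\right)
\]
and apply Hoeffding's inequality for independent variables bounded in $[0,1]$, which gives
\[
\mathbb{P}\!\left(\sum_{i=1}^n (X_i - p) \leq -s\right) \leq \exp\!\left(-\frac{2s^2}{n}\right)
\]
for every $s > 0$. Choosing $s = np - t$ (which is positive by hypothesis) yields
\[
\mathbb{P}(B \leq t) \leq \exp\!\left(-\frac{2(np-t)^2}{n}\right) = \exp\!\left(-2n\left(p - \tfrac{t}{n}\right)^2\right),
\]
which is exactly the claim.

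There is really no obstacle here: the only thing to be careful about is the sign of $np - t$ (handled by the hypothesis $n > t/p$) and citing the correct form of Hoeffding's inequality for $[0,1]$-valued variables. If I wanted to make the proof self-contained rather than quoting Hoeffding, I would instead apply the Chernoff method: bound $\mathbb{P}(B \leq t) \leq e^{\lambda t}\,\mathbb{E}[e^{-\lambda B}] = e^{\lambda t}(1 - p + p e^{-\lambda})^n$, use Hoeffding's lemma $\mathbb{E}[e^{-\lambda(X_i - p)}] \leq e^{\lambda^2/8}$ for $X_i \in [0,1]$, and optimize in $\lambda > 0$ to recover the exponent $-2(np-t)^2/n$. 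Either route is a few lines.
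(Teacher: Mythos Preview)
Your proof is correct. The paper itself does not give a proof of this lemma at all: it simply states it as a standard technical result (the classical lower-tail Hoeffding bound for a Binomial) and invokes it where needed, so there is no approach to compare against. Your reduction to i.i.d.\ Bernoulli summands and application of Hoeffding's inequality for $[0,1]$-valued variables (with the hypothesis $n>t/p$ ensuring $np-t>0$) is the standard justification and is entirely adequate.
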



\section{Proof for Bayes classifier}
\label{app:Bayes}

\setcounter{lemme}{0}
\renewcommand{\thelemme}{B.\arabic{lemme}}
\setcounter{prop}{0}
\renewcommand{\theprop}{B.\arabic{prop}}

We first denote for all $k\in\cY$
$$\Phi^k_t:=\frac{\dd{\mathbb P}_k|_{{\mathcal F}_t^N}}{\dd{\mathbb P}_0|_{{\mathcal F}^N_t}},$$
with $\mathcal{F}_T^N:=\sigma\left(\cT_T\right) = \sigma\left(N_t, 0 \leq t \leq T\right)$.
We classically obtain:
$$\log(\Phi^k_t) = -\sum_{j=1}^M \int_0^t (\lambda^{*}_{k,j}(s)-1) \dd s +  \int_0^t\log( \lambda^{*}_{k,j}(s) ) \dd N_{j}(s),$$
by writing {\it w.r.t.} a Poisson process measure of intensity 1  \citep[see Chapter 13 of][]{DVJ}.
Thus, for $t\geq 0$, we have the following equation for the mixture measure 
$$
d{\mathbb P}|_{{\mathcal F}^N_t} = \sum_{k=1}^K p_k \dd{\mathbb P}_k|_{{\mathcal F}^N_t} = \sum_{k=1}^K p_k \Phi^k_t \dd{\mathbb P}_0|_{{\mathcal F}^N_t}
$$
and then
$$
\frac{\dd{\mathbb P}_k|_{{\mathcal F}^N_t}}{\dd{\mathbb P}|_{{\mathcal{F}}^N_t}} = \frac{p_k\Phi_t^k \dd{\mathbb P}_0|_{{\mathcal F}^N_t}}{\sum_{j=1}^K p_j \Phi^j_t \dd{\mathbb P}_0|_{{\mathcal F}^N_t}} = \frac{p_k\Phi_t^k}{\sum_{j=1}^K p_j \Phi^j_t}. 
$$
Finally, by using \eqref{def:F}, it comes 
$\pi^*_k\left(\mathcal{T}_T\right) = \frac{p^*_k{\rm e}^{F_k^*}}{\sum_{j=1}^K p^*_j {\rm e}^{F_j^*}}$, that concludes the proof.

\section{Proofs for support recovery}
\label{app:supportRec}

\setcounter{lemme}{0}
\renewcommand{\thelemme}{C.\arabic{lemme}}
\setcounter{prop}{0}
\renewcommand{\theprop}{C.\arabic{prop}}

In this section, we gather the proof of the result provided in Section~\ref{subsec:LassoResult}. We first recall and introduce the main notations for the proof of the main result in Section~\ref{subsec:appNotations}.
Then, in section~\ref{subsec:BernsteinApp} we establish a Bernstein lemma. This lemma is the cornerstone of the proof of the support recovery which is given in Section~\ref{subsec:mainApp}.

\subsection{Notations} 
\label{subsec:appNotations}

We recall that the learning sample is $D_{n}= \{\left(\mathcal{T}_T^i,Y_i\right), \ldots, \left(\mathcal{T}_{T}^{n}, Y_n\right)\}$.
Let $k \in \mathcal{Y}$ be a fixed integer. Throughout this section,  all the results are established for a generic class $k$.
Let us define the random variables 
\begin{equation*}
n_k = \sum_{i = 1}^n \one_{Y^{(i)}=k}.
\end{equation*}
Hence $n_k \sim \mathcal{B}(n,p_k^*)$. 
We also recall that $\min_{k \in [K]} p_k^* \geq p_0 > 0$.\\

For sake of simplicity, we remove the dependency {\it w.r.t.} $k$.
To sum up, our parameters of interests are $\mu, A$, and we at our disposal a sample of (random) size $n_k$.
In the rest of this section, we work {\bf conditional on} $\{n_k \geq 1\}$.

\subsection{A Bernstein lemma}
\label{subsec:BernsteinApp}


\begin{lemme}[Bernstein Lemma]\label{lem:Z}
Assume that $n \geq \dfrac{2}{p_0^*}$.
Let us define the event 
\begin{equation*}
\Omega_n  := \left\{\dfrac{1}{n_k}\max_{j,j^{'}}\left|Z_{j,j{'}}\right| \leq  C \dfrac{\log^{3}\left(nM^2\right)}{\sqrt{n}} \right\} \bigcap \left\{n_k \geq \dfrac{np_k^*}{2}\right\}. 
\end{equation*}
There exists $C_{\left\|h\right\|_{\infty}, p_0^*} > 0$, such that $\mathbb{P}\left(\Omega_n\right) \geq 1- \dfrac{C_{\left\|h\right\|_{\infty}, p_0^*}}{n}$.
\end{lemme}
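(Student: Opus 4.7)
The plan is to bound $\mathbb{P}(\Omega_n^c)$ by splitting it into two contributions,
$$\mathbb{P}(\Omega_n^c) \leq \mathbb{P}\left(n_k < \tfrac{np_k^*}{2}\right) + \mathbb{P}\left(\tfrac{1}{n_k}\max_{j,j'}|Z_{j,j'}| > C\tfrac{\log^3(nM^2)}{\sqrt n}\,,\ n_k \geq \tfrac{np_k^*}{2}\right),$$
and to control each term separately. The first term is handled by the Hoeffding bound (Lemma A.3) applied to $n_k \sim \mathcal{B}(n,p_k^*)$ with $t = np_k^*/2$; the condition $n \geq 2/p_0^*$ guarantees $t/n < p_k^*$, so we obtain $\mathbb{P}(n_k < np_k^*/2) \leq \exp(-np_k^{*2}/2) \leq \exp(-np_0^2/2)$, which is $O(1/n)$.

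For the martingale deviation, I would first pass to a truncated event that makes the integrands of the $Z_{j,j'}$ bounded. By Assumption~\ref{ass:expomoments} together with Markov's inequality applied to $\exp(aN_j^{(i)}(T))$ and a union bound over $i \in [n]$ and $j \in [M]$, the event
$$\mathcal{E}_n := \left\{\max_{1 \leq i \leq n}\max_{j \in [M]} N_j^{(i)}(T) \leq B_n\right\},\qquad B_n := \tfrac{1}{a}\log(CnM\cdot n),$$
has probability at least $1 - C/n$. On $\mathcal{E}_n$, the weights $H_{j'}^{(i)}(t) = \int_0^t h(t-s)\,\mathrm{d}N_{j'}^{(i)}(s)$ are bounded by $\|h\|_\infty B_n$, and the predictable intensities $\lambda_j^{*(i)}(t)$ are bounded by $\mu_1 + C_A \|h\|_\infty B_n$ (using Assumption~\ref{ass:mu}), which are both of order $\log(nM)$.

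Next, I would apply the Bernstein-type inequality for continuous-time martingales proved in \cite{BGM} to each fixed $Z_{j,j'}$, whose predictable quadratic variation equals
$$\langle Z_{j,j'}\rangle_T = \sum_{i=1}^{n_k}\int_0^T H_{j'}^{(i)}(t)^2\,\lambda_j^{*(i)}(t)\,\mathrm{d}t.$$
On $\mathcal{E}_n$ this variation is at most $n_k\,T\cdot \|h\|_\infty^2 B_n^2(\mu_1 + C_A\|h\|_\infty B_n)$, i.e.\ of order $n_k\,\mathrm{polylog}(nM)$, and the martingale jumps are bounded by the same polylogarithmic quantity. For a fixed level $x > 0$ and on $\mathcal{E}_n \cap \{n_k \geq np_k^*/2\}$, Bernstein therefore yields
$$\mathbb{P}\bigl(|Z_{j,j'}| > x\bigr) \leq 2\exp\!\left(-\frac{x^2}{c_1\,n\,\log^2(nM) + c_2\,x\log^2(nM)}\right) + \mathbb{P}(\mathcal{E}_n^c).$$
Taking a union bound over the $M(M+1)$ coordinates $(j,j')$ and choosing $x = C\sqrt{n}\,\log^3(nM^2)$ makes the probability tail $O(1/n)$, and on $\{n_k \geq np_k^*/2\}$ this translates to the announced bound $|Z_{j,j'}|/n_k \leq C\log^3(nM^2)/\sqrt n$.

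The main obstacle is the unboundedness of the integrands $H_{j'}^{(i)}(t)$ and of the intensities $\lambda_j^{*(i)}(t)$, which must be tamed via Assumption~\ref{ass:expomoments}; the cost of this truncation, together with the union bound over the $M^2$ pairs and the self-normalisation, is precisely what inflates the logarithmic factor from the natural $\log(nM^2)$ to $\log^3(nM^2)$. Once the careful polylogarithmic bookkeeping is done and the two error terms are summed, one obtains $\mathbb{P}(\Omega_n) \geq 1 - C_{\|h\|_\infty,p_0^*}/n$, as desired.
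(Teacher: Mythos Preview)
Your overall architecture---Hoeffding for $n_k$, a truncation on $\max_{i,j}N_j^{(i)}(T)$ via Assumption~\ref{ass:expomoments}, then a Bernstein inequality from \cite{BGM} for each $Z_{j,j'}$ followed by a union bound over the $M(M{+}1)$ pairs---matches the paper's proof. The difference lies in which quadratic variation you feed into Bernstein, and this is where a genuine gap appears.

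You control the \emph{predictable} variation $\langle Z_{j,j'}\rangle_T=\sum_i\int_0^T H_{j'}^{(i)}(t)^2\lambda_j^{*(i)}(t)\,dt$ and claim $\lambda_j^{*(i)}(t)\le \mu_1+C_A\|h\|_\infty B_n$ from Assumption~\ref{ass:mu}. But Assumption~\ref{ass:mu} only gives $\|A^*\|_F\le C_A$, whereas what enters $\lambda_j^{*(i)}(t)$ on $\mathcal{E}_n$ is the row sum $\sum_{j'}a^*_{j,j'}$, for which Cauchy--Schwarz yields at best $\sum_{j'}a^*_{j,j'}\le \sqrt{M}\,C_A$. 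This extra $\sqrt{M}$ propagates into the variance bound, so after the union bound and normalisation by $n_k$ you would obtain $M^{1/4}\log^c(nM)/\sqrt{n}$ rather than $\log^3(nM^2)/\sqrt{n}$; in the high-dimensional regime $M=M_n\to\infty$ this does not give the stated event. None of the standing assumptions (spectral radius $<1$, Frobenius bound, exponential moments of $N_j(T)$) implies a uniform bound on the row sums of $A^*$.

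The paper sidesteps this by using the \emph{self-normalised} form of the Bernstein inequality (Theorem~4 in \cite{BGM}), whose variance proxy is the \emph{optional} quadratic variation
\[
\widehat V_{\mathbb T}(T)=\sum_{i=1}^{n_k}\int_0^T \bigl(H_{j'}^{(i)}(s)\bigr)^2\,\mathrm dN_j^{(i)}(s)
\;\le\; n_k\,\|h\|_\infty^2\Bigl(\max_{i,j}N_j^{(i)}(T)\Bigr)^{3},
\]
a bound that depends only on the counts $N_j^{(i)}(T)$ and therefore, on the truncation event, is of order $n_k\,\mathrm{polylog}(nM)$ with no hidden $M$-dependence. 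Taking $x=\log(nM^2)$ in that inequality and combining with the truncation threshold $\max N\le \log^{5/3}(nM)$ yields exactly the $\log^3(nM^2)/\sqrt{n}$ bound. Your argument becomes correct if you replace the predictable variation by this optional one (equivalently, invoke the self-normalised version of the BGM inequality rather than the classical predictable-variation Bernstein).
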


\begin{proof}
Fore clarity of presentation, the proof is divided in two steps. 


\paragraph*{First step.}
In this step we work on the event $\{n_k \geq 1\}$ and conditional on $\one_{\{Y_1 = k\}}, \ldots, \one_{\{Y_n = k\}}$.
For $j,j^{'} \in [M] \times \left(\{0\} \cup [M]\right)$, we apply Theorem~4 in~\cite{BGM} to the real valued random variable $Z_{j,j^{'}}$. 
For clarity, we consider the same notations as in~\cite{BGM}. 

To this end,  for a fixed $(j,j^{'}) \in [M] \times \left(\{0\} \cup [M]\right)$
and $t \in [0;T]$,
we define the tensor (see~\cite{BGM} for its definition and related properties) $\mathbb{T}_t$ of shape $1 \times 1 \times M \times n_k$ as follows 
\begin{equation}\label{eq:tensor}
\mathbb{(T}_{t})_{1,1,k,\ell} = 
\begin{cases}
H_{j^{'}}^{(\ell)}(t) \;\; {\rm if} \;\; k = j\\
0 \;\; {\rm else},
\end{cases}
\end{equation}
for  $k \in [M]$ and $\ell \in [n_k]$.
We also recall that the matrix $\dd M(t)$ is defined by the main term $\dd M(t)_{j,i} = \dd M_{j}^{(i)}(t)$.
According to~\cite{BGM} we have that $Z_{j,j^{'}} = Z_{\mathbb{T}}(T) \in \mathbb{R}$  defined by
\begin{equation*}
Z_{\mathbb{T}_t}(T) = \int_{0}^T \mathbb{T}_{t} \circ dM_t
\end{equation*}    
satisfies 
\begin{equation}\label{eq:newZT}
Z_{\mathbb{T}_t}(T) = 
\sum_{k=1}^{M}\sum_{i = 1}^{n_k} \int_{0}^T
\mathbb{(T}_{t})_{1,1,k,i} dM_{k,i}(t)
=  \sum_{i=1}^{n_k} \int_{0}^T
H_{j^{'}}^{(i)}(t) \dd M_{j}^{(i)}(t).
\end{equation}
Furthermore, we observe that since the tensor $\mathbb{T}_t$ is symmetric we have
\begin{equation*}
\widehat{V}_{\mathbb{T}}(t) :=  \int_0^t \mathbb{T}_s^2 \circ \dd N_s = \sum_{i = 1}^{n_k} \int_0^t \left(H_{j^{'}}^{(i)}(s)\right)^2 \mathrm{d}N_j^{(i)}(s),
\end{equation*}
and
\begin{equation*}
b_{\mathbb{T}_t}:= \sup_{0\leq s \leq t} \max( \| \mathbb{T}_s\|_{\text{op}, \infty} \| \mathbb{T}_s^\prime\|_{\text{op}, \infty})
=  \sup_{0\leq s \leq t}\max_{i = 1, \ldots,n_k} \left|H_{j^{'}}^{(i)}(s)\right|
\end{equation*}
which both depend on $(j,j')$. 

Applying Theorem 4 of~\cite{BGM} on the event $\{n_k \geq 1\}$ and conditional on $\one_{\{Y_1 = k\}}, \ldots, \one_{\{Y_n = k\}}$, we then obtain that for $x > 0$ with probability at least $1-C\exp(-x)$ the following holds
\begin{equation}\label{eq:eqBernsteinOp1}
\left|Z_{j,j^{'}}\right|
\leq 2\sqrt{\lambda_{\max} (\w{V}_{\mathbb{T}_T}) (x+\ell_x(T))}
+ c(x+\ell_x(T))\left(1+b_{\mathbb{T}_T}\right), 
\end{equation}
since for all $(j,j^{'}) \in [M] \times \{0, \ldots M\}$,
\begin{equation}
\label{eq:VT}
\lambda_{\max}\left(\widehat{V}_{\mathbb{T}}(T)\right) \leq \widehat{V}_{\infty} :=\max_{i, j^{'}} \left(H_{j^{'}}^{(i)}(T)\right)^2\max_{i,j} N_j^{(i)}(T), 
\end{equation}
and
\begin{equation}
\label{eq:bt}
b_{\mathbb{T}_T} \leq b_{\infty}:=  \max_{i,j^{'}} \left|H_{j^{'}}^{(i)}(T)\right|,  
\end{equation}
from Equation~\eqref{eq:eqBernsteinOp1}, setting $x = \log(nM^2)$, with an union bound on $j,j^{'}$ we
obtain that the event
\begin{equation*}
\mathcal{Z} = \left\{\max_{j,j^{'}} \left|Z_{j,j^{'}}\right| \leq  2\sqrt{ \w{V}_{\infty} \left(\log(nM^2)+\ell_{\infty}\right)} + c\left(\log(nM^2)+\ell_{\infty}\right)\left(1+b_{\infty}\right)\right\},
\end{equation*}
with 
\begin{equation}
\label{eq:lt}
\ell_{\infty} =   2\log\log\left(\dfrac{4\w{V}_{\infty}}{\log(nM^2)} \vee 2\right) + 2\log\log\left(4b_{\infty }\vee 2\right),  
\end{equation}
satisfies
\begin{equation*}
\one_{\left\{n_k \geq 1 \right\}} \mathbb{P}\left(\mathcal{Z}^{c} | \one_{\{Y^{(1)} = k\}}, \ldots, \one_{\{Y^{(n)} = k\}}\right)  \leq  \one_{\left\{n_k \geq 1 \right\}} \dfrac{C}{n} \leq \dfrac{C}{n}. 
\end{equation*}
From the above inequality, we deduce that
\begin{eqnarray}
\label{eq:eqZeventC}
\nonumber
\mathbb{P}\left(\mathcal{Z}^{c}\right)  & = &\mathbb{P}\left(\mathcal{Z}^{c}, n_k \geq 1 \right) +
 \mathbb{P}\left(\mathcal{Z}^{c}, n_k =0  \right)\\ \nonumber
 &\leq & \dfrac{C}{n}+ \mathbb{P}\left(n_k = 0\right)\\ \nonumber
  & \leq &    \dfrac{C}{n} + \exp(n\log(1-p_k)) \\ 
 &\leq &     \dfrac{C}{n} + \exp(n\log(1-p_0) \leq \dfrac{C}{n}.
\end{eqnarray}

\paragraph{Second step.}

In this step, we provide a bound for $\w{V}_{\infty}$, $b_{\infty}$, and $\ell_{\infty}$ respectively defined in Equation~\eqref{eq:VT}, ~\eqref{eq:bt}, and~\eqref{eq:lt}.
To this end, we introduce the event 
\begin{equation*}
\Omega = \left\{n_k \geq \dfrac{np_k}{2}\right\} \bigcap \left\{\one_{\{n_k \geq 1\}}\max_{i,j} N_j^{(i)}(T) \leq \log^{5/3}(Mn)\right\}.
\end{equation*}
Note that, in view of the definition of $H_{j'}^{(i)}$, we have that on the event $\{n_k \geq 1\}$, we have
\begin{equation*}
\w{V}_{\infty} \leq \max\left(n_k \left\|h\right\|_{\infty} \max_{i,j} \left(N_j^{(i)}(T)\right)^3, n_k \max_{i,j} (N_j^{(i)}(T) \right) \leq C_{\left\|h\right\|_{\infty}} n_k\log^{5}(n).
\end{equation*}
With the same idea, we have that $b_{\infty} \leq  C_{\left\|h\right\|_{\infty}} \log^{5/3}(n)$.
Finally, we observe that $\ell_{\infty} \leq 2\log(nM^2)$ (as $M \geq 2$). Hence,
on the event $ \Omega \cap \mathcal{Z}$, it holds that $n_k \geq 1$ (since $n \geq \frac{2}{p_0})$, and,
\begin{equation*}
\dfrac{1}{n_k} \max_{j,j^{'}} \left|Z_{j,j^{'}} \right| \leq C \dfrac{\log^{3}\left(nM^2\right)}{\sqrt{n_k}}\leq C \dfrac{\log^{3}\left(nM^2\right)}{\sqrt{np_k} }\leq C \dfrac{\log^{3}\left(nM^2\right)}{\sqrt{n p_0}}.
\end{equation*}
To conclude the proof, since $ \mathbb{P}\left(\Omega^{c}_n\right) \leq \mathbb{P}\left(\left(\mathcal{Z}\cap \Omega\right)^{c}\right)$, it remains to control
$\mathbb{P}\left(\left(\mathcal{Z}\cap \Omega\right)^{c}\right)$.

Conditional on $\one_{\{Y_1 = k\}}, \ldots, \one_{\{Y_n = k\}}$, on the event $ \{n_k \geq 1\}$, applying the sub-exponential property of $N_j^{(i)}$, and Proposition~2.7.1 in~\cite{vershynin_2018}, we get
\begin{eqnarray*}
\mathbb{P}\left(\max_{i,j} N_j^{(i)}(T) > \log^{5/3}(Mn)\right) &\leq & Mn_k\exp\left(-c\log^{5/3}(nM)\right)\\
& \leq& \dfrac{1}{n}.    
\end{eqnarray*}
Therefore, from Lemma~\ref{lem:Hoeffding},
\begin{eqnarray*}
\mathbb{P}\left(\Omega^{c}\right) & \leq & \dfrac{1}{n} +\mathbb{P}\left(n_k \leq \dfrac{np_k}{2} \right)  \leq \dfrac{1}{n} + \exp\left(-n\frac{p_{0}}{2}\right) \\
&\leq &\dfrac{1}{n}.
\end{eqnarray*}
Finally, combining the last equation with Equation~\eqref{eq:eqZeventC}, we deduce that,
\begin{equation*}
\mathbb{P}\left(\left(\mathcal{Z}\cap \Omega\right)^{c}\right) \leq \dfrac{C}{n},    
\end{equation*}
which yields the result.

\end{proof}

\subsection{Proof of the main result~\ref{prop:convsupport}}
\label{subsec:mainApp}


Throughout the proof, we work on the event 
\begin{equation*}
\Omega_n  := \left\{ \dfrac{1}{n_k}\max_{j,j^{'}}\left|Z_{j,j{'}}\right| \leq  C \dfrac{\log^{3}\left(nM^2\right)}{\sqrt{n}} \right\} \bigcap \left\{n_k \geq \dfrac{np_k}{2}\right\}. 
\end{equation*}
Note that on the event $\Omega_n$, since $n \geq \frac{2}{p_0}$, the random variable $n_k$ satisfies $n_k \geq 1$.


The proof follows the {\it primal-dual witness method} as in~\cite{tibshiraniWainwright} Chapter 11, and goes in several steps.
Let us consider the penalized contrast
\begin{equation}\label{eq:penalisedcontrast}
\mathcal{C}(\theta) := R_{T,n_k}(\theta) + \kappa \sum_{j = 1}^M \sum_{j'=1}^M |\theta_{j,j'}|.
\end{equation}
An element $z$ of the subgradient of $\mathcal{C}$ at some point $\theta$ writes
as follows 
\begin{equation*}
\nabla R_{T,n_k}(\theta) + \kappa z,    
\end{equation*}
where the concatenated vector $z$ is $z=(z_1, \ldots, z_M)^\prime$ with ${z}_{j,0} = 0$ and ${z}_{j,j'} = {\rm sign}(\theta_{j,j'})$ for $j' \geq 2$ (with the convention that ${\rm sign}(0) \in [-1,1]$).
We say that a pair $(\w{\theta}, \w{z})$
is optimal if it satisfies the following zero-subgradient equation
\begin{equation}
\label{eq:eqSubGradientOptimal}
\nabla R_{T,n_k}(\w{\theta}) + \kappa \w{z} = 0.
\end{equation}

\paragraph{First step.}

We first build an \enquote{oracle} pair $(\w{\theta}, \w{z})$
that satisfies Equation~\eqref{eq:eqSubGradientOptimal} and such that $\w{\theta}_{S^{*c}} = 0$. 
First we define $\w{\theta}$, and $\w{z}_{S^*}$ as follows.
\begin{enumerate}
\item $\w{\theta}_{S^{*c}} = 0$,
\item $\w{\theta}_{S^*} \in \argmin{\theta_{S^*}} \widetilde{R}_{T,n_k}(\theta_{S^*})+  \kappa \sum_{j = 1}^M \sum_{j' \in S_{\theta_j}^*} |\theta_{j,j'}|$,
where 
\begin{multline*}
\widetilde{R}_{T,n_k}(\theta_{S^*}) = 
 \frac{1}{n_kT}\sum_{i=1}^{n_k} \sum_{j=1}^M\int_{0}^T 
 \left(\sum_{j' \in S_{\theta_j}^*}{\theta}_{j,j'}H^{(i)}_{j'}(t)\right)^2 \dd t 
 - 2 \int_0^T\left(\sum_{j' \in S_{\theta_j}^*} \theta_{j,j'} H^{(i)}_{j'}(t)\right) \dd N^{(i)}_j(t) .
\end{multline*}
\end{enumerate}
In view of the above conditions, since $\w{\theta}_{S^*}$ is a minimizer, we have for each $j \in [M]$
\begin{equation*}
\left(\nabla R_{T,n_k}(\w{\theta})\right)_{S_{\theta_j}^*} + \kappa \w{z}_{S_{\theta_j}^*}  =  0.  
\end{equation*}
We then have to build for each $j \in [M]$,
$\w{z}_{S_j*c}$ such that
\begin{equation*}
\left(\nabla R_{T,n_k}(\w{\theta})\right)_{S_{\theta_j}^{*c}} +  \kappa\w{z}_{S_j*c}  =  0.   
\end{equation*}
Hence, from the above equations and from the notation given in Equation \eqref{eq:H}, we deduce 
that $(\w{\theta}, \w{z})$ must satisfies
\begin{equation*}
\dfrac{2}{n_k} \mathbb{H}_{S_{\theta_j}^{*c}, S_{\theta_j}^*}\left(\w{\theta}_j-\theta^*_j\right)_{S_j^{*}}  -\dfrac{2}{n_k} (Z_j)_{S_{\theta_j}^{*c}} + \kappa z_{S_{\theta_j}^{*c}}    = 0,
\end{equation*}
and
\begin{equation*}
\dfrac{2}{n_k} \mathbb{H}_{S_j^{*}, S_{\theta_j}^*}\left(\w{\theta}_j-\theta^*_j\right)_{S_j^{*}}  -\dfrac{2}{n_k} (Z_j)_{S_j^{*}} + \kappa z_{S_j^{*}}    = 0.
\end{equation*}
From the last equation, and as 
$z_{S_{\theta_j}^*}={\rm sign}((\w{\theta}_{j})_{S_{\theta_j}^*})$,
we observe that
\begin{equation}
\label{eq:eqExcessTheta}
\left(\w{\theta}_j- \theta^*_j\right)_{S_{\theta_j}^*} = \mathbb{H}_{S_{j}^{*}, S_{\theta_j}^*}^{-1} (Z_j)_{S_j^{*}} - \dfrac{n_k \kappa}{2} \mathbb{H}_{S_{\theta_j}^*, S_{\theta_j}^*}^{-1} {\rm sign}((\w{\theta}_{j})_{S_{\theta_j}^*}).
\end{equation}
Therefore, we set for each $j \in [M]$, 
\begin{equation}
\label{eq:eqDecompzComp}
z_{S_{\theta_j}^{*c}}  = -\dfrac{2}{n_k \kappa}\left(\mathbb{H}_{S_{\theta_j}^{*c}, S_{\theta_j}^*}  \mathbb{H}_{S_{j}^{*}, S_{\theta_j}^*}^{-1} (Z_j)_{S_j^{*}} - (Z_j)_{S_{\theta_j}^{*c}}\right) +  \mathbb{H}_{S_{\theta_j}^{*c}, S_{\theta_j}^*}  \mathbb{H}_{S_{j}^{*}, S_{\theta_j}^*}^{-1} {\rm sign}((\w{\theta}_{j})_{S_{\theta_j}^*}).
\end{equation}
We then have build an optimal solution $(\w{\theta}, \w{z})$ that satisfies the required condition.

\paragraph*{Second step.}

The goal of the second step is to prove that $\|\w{z}_{S^{*c}}\|_{\infty} < 1$ which implies the following result.
\begin{lemme}
Assume that $\|\w{z}_{S{^*c}}\|_{\infty} < 1$. Then, any solution $\widetilde{\theta}$ of the minimization problem
$\min_{\theta} \mathcal{C}(\theta)$    
satisfies $\widetilde{\theta}_{S^{*c}} = 0$.
\end{lemme}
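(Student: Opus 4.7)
The plan is to exploit convexity of $\mathcal{C}$ together with the zero-subgradient identity \eqref{eq:eqSubGradientOptimal} satisfied by the oracle pair $(\w{\theta},\w{z})$ built in the first step, which is the standard primal-dual witness argument. The key observation is that $R_{T,n_k}$ is a convex quadratic in $\theta$, so for any competitor $\widetilde{\theta}$,
\[
R_{T,n_k}(\widetilde{\theta}) - R_{T,n_k}(\w{\theta}) \geq \langle \nabla R_{T,n_k}(\w{\theta}), \widetilde{\theta}-\w{\theta}\rangle = -\kappa\,\langle \w{z}, \widetilde{\theta}-\w{\theta}\rangle,
\]
where the equality is exactly Equation \eqref{eq:eqSubGradientOptimal}. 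Adding $\kappa(\|\widetilde{\theta}\|_1 - \|\w{\theta}\|_1)$ to both sides (with $\|\cdot\|_1$ here denoting the penalized sum over indices $(j,j')$ with $j'\geq 1$) and using that $\w{z}$ is a valid subgradient of $\|\cdot\|_1$ at $\w{\theta}$, so $\langle \w{z},\w{\theta}\rangle = \|\w{\theta}\|_1$, I would obtain
\[
\mathcal{C}(\widetilde{\theta}) - \mathcal{C}(\w{\theta}) \geq \kappa \sum_{j=1}^M\sum_{j'=1}^M \bigl(|\widetilde{\theta}_{j,j'}| - \w{z}_{j,j'}\widetilde{\theta}_{j,j'}\bigr).
\]

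Next, I would analyze this sum index by index. Since $|\w{z}_{j,j'}|\leq 1$ throughout, each summand is non-negative, so the indices in $S^*$ contribute at least zero. For indices in $S^{*c}$, the hypothesis $\|\w{z}_{S^{*c}}\|_\infty<1$ provides a strictly positive margin $\eta := 1 - \|\w{z}_{S^{*c}}\|_\infty > 0$, and hence $|\widetilde{\theta}_{j,j'}| - \w{z}_{j,j'}\widetilde{\theta}_{j,j'} \geq (1-|\w{z}_{j,j'}|)|\widetilde{\theta}_{j,j'}| \geq \eta\,|\widetilde{\theta}_{j,j'}|$. This yields the crucial inequality
\[
\mathcal{C}(\widetilde{\theta}) - \mathcal{C}(\w{\theta}) \geq \kappa\eta \sum_{(j,j')\in S^{*c}} |\widetilde{\theta}_{j,j'}|.
\]

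To conclude, I would invoke the fact that $\w{\theta}$ is itself a global minimizer of the convex function $\mathcal{C}$ (as $0 \in \partial\mathcal{C}(\w{\theta})$ by construction). Therefore any other minimizer $\widetilde{\theta}$ satisfies $\mathcal{C}(\widetilde{\theta}) = \mathcal{C}(\w{\theta})$, and the inequality above forces $\sum_{(j,j')\in S^{*c}}|\widetilde{\theta}_{j,j'}|=0$, i.e., $\widetilde{\theta}_{S^{*c}}=0$.

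There is no genuine obstacle here beyond routine bookkeeping; the lemma is essentially a restatement of the standard ``strict dual feasibility implies support localization'' principle. The only mild care required concerns the unpenalized baseline components $\theta_{j,0}$, for which $\w{z}_{j,0}=0$ by convention: they do not appear in the penalized $\ell_1$ sum and fall outside both $S^*$ and $S^{*c}$ in the adjacency-matrix indexing, so they play no role in the inequality above.
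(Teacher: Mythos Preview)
Your proof is correct and follows essentially the same primal-dual witness argument as the paper: both use convexity of $R_{T,n_k}$ together with the zero-subgradient identity to derive $\sum_{j,j'}|\widetilde{\theta}_{j,j'}| \leq \langle \w{z},\widetilde{\theta}\rangle$ (equivalently, your displayed inequality with $\mathcal{C}(\widetilde{\theta})=\mathcal{C}(\w{\theta})$), and then invoke strict dual feasibility $\|\w{z}_{S^{*c}}\|_\infty<1$ to force $\widetilde{\theta}_{S^{*c}}=0$. Your presentation with the explicit margin $\eta$ is slightly cleaner, but the underlying idea is identical.
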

\begin{proof}
Let $\widetilde{\theta}$ another solution. Then, it holds that
\begin{equation*}
R_{T,n}(\w{\theta}) +  \kappa \langle \w{z} , \w{\theta} \rangle  =    R_{T,n}(\widetilde{\theta}) + \kappa \sum_{j = 1}^M \sum_{j'=1}^M |\widetilde{\theta}_{j,j'}|,
\end{equation*}
we deduce that 
\begin{equation*}
R_{T,n}(\w{\theta}) -  \kappa \langle \w{z} , \widetilde{\theta}-\w{\theta} \rangle = 
R_{T,n}(\widetilde{\theta}) +  \kappa \left( \sum_{j = 1}^M \sum_{j'=1}^M |\widetilde{\theta}_{j,j'}| - \langle \w{z}, \widetilde{\theta} \rangle\right).
\end{equation*}
Since the pair $(\w{\theta},\w{z})$ satisfies Equation~\eqref{eq:eqSubGradientOptimal}, we have that
\begin{equation*}
 \kappa \w{z} = -\nabla R_{T,n}(\w{\theta}),   
\end{equation*}
which leads to 
\begin{equation*}
R_{T,n}(\w{\theta}) - R_{T,n}(\widetilde{\theta}) + \langle \nabla R_{T,n}(\w{\theta}), \widetilde{\theta}-\w{\theta} \rangle =   \kappa \left( \sum_{j = 1}^M \sum_{j'=1}^M |\widetilde{\theta}_{j,j'}| - \langle \w{z}, \widetilde{\theta} \rangle\right).
\end{equation*}
Hence, from the above equation and the convexity of $R_{T,n}$ we deduce that
\begin{equation*}
 \kappa \left( \sum_{j = 1}^M \sum_{j'=1}^M |\widetilde{\theta}_{j,j'}| - \langle \w{z}, \widetilde{\theta} \rangle\right) \leq 0.
\end{equation*}
Therefore, we obtain that
\begin{equation*}
\sum_{j = 1}^M \sum_{j'=1}^M |\widetilde{\theta}_{j,j'}| \leq   \langle \w{z}, \widetilde{\theta} \rangle = \sum_{j=1}^M\sum_{j'=1}^M \w{z}_{j,j'} \widetilde{\theta}_{j,j'}.
\end{equation*}
Since $\|\w{z}_{S^{*c}}\|_{\infty} < 1$, if there exists $\tilde{\theta}_{j,j'} \neq 0$ for 
$(j,j') \in S^{*c}$ we get
\begin{equation*}
 \sum_{j = 1}^M \sum_{j'=1}^M |\widetilde{\theta}_{j,j'}| <  \sum_{j = 1}^M \sum_{j'=1}^M |\widetilde{\theta}_{j,j'}|,   
\end{equation*}
which leads us to a contradiction. Therefore $\widetilde{\theta}_{S^{*c}} = 0$.
\end{proof}

Now we show that for $ \kappa\geq \dfrac{\log^4(nM^2)}{\sqrt{n}}$, we have $\|\w{z}_{S^{*c}}\|_{\infty} < 1$ on the event $\Omega_n$. From 
Equation~\eqref{eq:eqDecompzComp}, we deduce that for each $j \in [M]$
\begin{equation*}
\|\w{z}_{S_{\theta_j}^{*c}}\|_{\infty} \leq  \|\mathbb{H}_{S_{\theta_j}^{*c}, S_{\theta_j}^*}  \mathbb{H}_{S_{\theta_j}^*,S_{\theta_j}^*}^{-1}\|_{\infty} +  \|\mathbb{H}_{S_{\theta_j}^{*c}, S_{\theta_j}^*}  \mathbb{H}_{S_{\theta_j}^*,S_{\theta_j}^*}^{-1}\|_{\infty}
\dfrac{2}{n_k \kappa} \|(Z_j)_{S_{\theta_j}^*}\|_{\infty} + \dfrac{2}{n_k \kappa} \|(Z_j)_{S_{\theta_j}^{*c}}\|_{\infty}.
\end{equation*}
From Assumption (MI), we get for some $\gamma \in (0,1)$
\begin{equation}
\label{eq:eqBoundzhatComp}
\|\w{z}_{S^{*c}}\|_{\infty} \leq   (1-\gamma)\left(1 + \dfrac{2}{n_k \kappa} \|(Z_j)_{S_{\theta_j}^*}\|_{\infty}\right) +\dfrac{2}{n_k \kappa} \|(Z_j)_{S_{\theta_j}^{*c}}\|_{\infty}.
\end{equation}
From Lemma~\ref{lem:Z} we have with probability larger than $1-\dfrac{CM}{n}$ on an event $\Omega_n$
that
\begin{equation*}
\dfrac{1}{n_k} \|(Z_j)_{S_{\theta_j}^*}\|_{\infty} \leq \dfrac{C\log^3(nM^2)}{\sqrt{n}}, \;\;
\dfrac{1}{n_k} \|(Z_j)_{S_{\theta_j}^{*c}}\|_{\infty} \leq \dfrac{C\log^3(nM^2)}{\sqrt{n}}.
\end{equation*}
Hence, from Equation~\eqref{eq:eqBoundzhatComp}, for $n$ large enough, we deduce that, with probability larger than on $\Omega_n$,
\begin{equation*}
\|\w{z}_{S^{*c}}\|_{\infty} <1,
\end{equation*}
provided that $\dfrac{C\log^{3}(nM^2)}{ \kappa\sqrt{n}} \rightarrow 0$ as $n \rightarrow + \infty$.
Therefore, the choice $ \kappa \geq \dfrac{\log^4(nM^2)}{\sqrt{n}}$ yields the desired result.

\paragraph*{Third step.}
In the second step, we show for $n$ large enough that on $\Omega_n$, any solution of $\min_{\theta} \mathcal{C}(\theta)$
(with $\mathcal{C}$ given in \eqref{eq:penalisedcontrast}) is a solution of 
\begin{equation*}
\min_{\theta_{S^*}} \widetilde{R}_{T,n}(\theta_{S^*})+  \kappa \sum_{j = 1}^M \sum_{j' \in S_{\theta_j}^*} |\theta_{j,j'}|.  
\end{equation*}
In this step,  we establish the following result.
\begin{lemme}
\label{lem:lemBoundThetaInfty}
Let $\w{\theta}_{S^*}$ defined as 
\begin{equation*}
\w{\theta}_{S^*} \in \argmin{\theta_{S^*}}  \left\{\widetilde{R}_{T,n}(\theta_{S^*})+  \kappa \sum_{j = 1}^M \sum_{j' \in S_{\theta_j}^*} |\theta_{j,j'}|\right\}. 
\end{equation*}
Under Assumption~(ME), for $ \kappa  = \dfrac{\log^4(nM^2)}{\sqrt{n}}$, it holds that on $\Omega_n$
\begin{equation*}
\left\|\w{\theta}_{S^*} - \theta_{S^*}\right\|_{\infty}  \leq \dfrac{C\Lambda_0\max_j \sqrt{|S_{\theta_j}^*}|\log^4(nM^2)}{\sqrt{n}}. 
\end{equation*}
\end{lemme}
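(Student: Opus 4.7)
My plan is to exploit directly the closed-form expression for the restricted estimator derived in the first step of the proof of Theorem~\ref{prop:convsupport}. Namely, writing the first-order optimality conditions for $\w{\theta}_{S^*}$ and using that $\mathbb{G}_j = \mathbb{H}\theta_j^* + T^{-1}Z_j$ (from the Doob-Meyer decomposition $\dd N_j^{(i)} = \lambda_j^{(i)} \dd t + \dd M_j^{(i)}$), we recover Equation~\eqref{eq:eqExcessTheta}, which reads, on $\Omega_n$ and for each $j \in [M]$,
\begin{equation*}
\left(\w{\theta}_j - \theta_j^*\right)_{S_{\theta_j}^*} = \mathbb{H}_{S_{\theta_j}^*, S_{\theta_j}^*}^{-1} (Z_j)_{S_{\theta_j}^*} - \frac{n_k \kappa}{2}\, \mathbb{H}_{S_{\theta_j}^*, S_{\theta_j}^*}^{-1}\, \mathrm{sign}\bigl((\w{\theta}_j)_{S_{\theta_j}^*}\bigr),
\end{equation*}
(up to the harmless factor $1/T$, which is constant). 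From here the proof reduces to a triangle inequality plus a controlled bound on each of the two summands in the $\ell_\infty$ norm.

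For both terms, I would apply Lemma~\ref{lem:boundInftyMatrix} with $d=|S_{\theta_j}^*|$, which gives $\|\mathbb{H}_{S_{\theta_j}^*, S_{\theta_j}^*}^{-1} X\|_\infty \leq \sqrt{|S_{\theta_j}^*|}\, \rho(\mathbb{H}_{S_{\theta_j}^*, S_{\theta_j}^*}^{-1})\, \|X\|_\infty$. Assumption~(ME) then yields $\rho(\mathbb{H}_{S_{\theta_j}^*, S_{\theta_j}^*}^{-1}) = 1/\Lambda_{\min}(\mathbb{H}_{S_{\theta_j}^*, S_{\theta_j}^*}) \leq 1/(n_k \Lambda_0)$. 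For the stochastic summand, I would invoke Lemma~\ref{lem:Z} (Bernstein), which on $\Omega_n$ provides $\max_{j,j'}|Z_{j,j'}|\leq C n_k \log^{3}(nM^2)/\sqrt{n}$; this combines with the matrix bound to give a contribution of order $\sqrt{|S_{\theta_j}^*|}\,\log^{3}(nM^2)/\sqrt{n}$, up to a constant depending only on $\Lambda_0$. For the deterministic (penalty) summand, $\|\mathrm{sign}((\w{\theta}_j)_{S_{\theta_j}^*})\|_\infty \leq 1$ trivially, and the $n_k$ factors cancel, leaving a contribution of order $\sqrt{|S_{\theta_j}^*|}\, \kappa$. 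With the prescribed choice $\kappa = \log^{4}(nM^2)/\sqrt{n}$ the penalty term dominates the stochastic term, and taking a maximum over $j \in [M]$ yields the announced bound in the statement.

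The main obstacle, or at least the point requiring care, is making sure that all manipulations remain valid on the event $\Omega_n$: in particular that $n_k \geq n p_k^*/2 \geq 1$ so the inverse $\mathbb{H}_{S_{\theta_j}^*, S_{\theta_j}^*}^{-1}$ is well-defined (thanks to Assumption~(ME)), and that the Bernstein control on the $Z_{j,j'}$'s is uniform in $(j,j')\in [M]\times\{0,\dots,M\}$. Once these are in place, the rest is a routine application of the two lemmas together with the triangle inequality; no further probabilistic argument is needed since we have already restricted to $\Omega_n$.
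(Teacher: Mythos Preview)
Your proposal is correct and follows essentially the same approach as the paper: start from Equation~\eqref{eq:eqExcessTheta}, apply the triangle inequality, bound each summand via Lemma~\ref{lem:boundInftyMatrix} combined with Assumption~(ME) for the spectral radius, invoke Lemma~\ref{lem:Z} on $\Omega_n$ for the stochastic term, and conclude with the choice $\kappa = \log^4(nM^2)/\sqrt{n}$. The only cosmetic difference is that the paper writes the decomposition directly in terms of $\mathbb{H}_{S_{\theta_j}^*,S_{\theta_j}^*}/n_k$ rather than first isolating $\rho(\mathbb{H}^{-1})$ and then cancelling the $n_k$ factors, but the two presentations are equivalent.
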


\begin{proof}
From Equation~\eqref{eq:eqExcessTheta}, we get for each $j \in \{1, \ldots,M\}$
\begin{equation*}
\left\|\w{\theta}_{S_{\theta_j}^*} - \theta_{S_{\theta_j}^*}\right\|_{\infty}  \leq 
\left\|\left(\dfrac{\mathbb{H}_{S_{j}^{*}, S_{\theta_j}^*}}{n_k}\right)^{-1} \dfrac{(Z_j)_{S_{\theta_j}^*}}{n_k}\right\|_{\infty} +
\dfrac{ \kappa}{2} \left\|\left(\dfrac{\mathbb{H}_{S_{\theta_j}^*, S_{\theta_j}^*}}{n_k}\right)^{-1}{\rm sign}((\w{\theta}_{j})_{S_{\theta_j}^*})\right\|_{\infty}.
\end{equation*}
Applying Lemma~\ref{lem:boundInftyMatrix}, and~\ref{lem:Z} together with Assumption~(ME), we obtain 
\begin{equation*}
\left\|\w{\theta}_{S_{\theta_j}^*} - \theta^*_{S_{\theta_j}^*}\right\|_{\infty} \leq \Lambda_0 \sqrt{\left|S_{\theta_j}^*\right|}\left(\dfrac{C\log^3(nM^2)}{\sqrt{n}} +  \kappa\right). 
\end{equation*}
Therefore, the choice of $ \kappa = \dfrac{\log^4(nM^2)}{\sqrt{n}}$ yields 
the desired result.
\end{proof}

\paragraph*{Fourth step.}

We deduce from Lemma~\ref{lem:lemBoundThetaInfty} and Assumption~\ref{ass:MS} that
\begin{equation*}
{\rm sign}(\w{\theta}_{S^*}) = {\rm sign}(\theta^*_{S^*}).
\end{equation*}
Therefore, from Equation~\eqref{eq:eqExcessTheta}, we deduce that on $\Omega_n$ for $ \kappa = \dfrac{\log^4(nM^2)}{\sqrt{n}}$ ,
\begin{equation*}
\theta_{S^*} \mapsto \min_{\theta_{S^*}} \widetilde{R}_{T,n}(\theta_{S^*})+  \kappa \sum_{j = 1}^M \sum_{j' \in S_{\theta_j}^*} |\theta_{j,j'}|, 
\end{equation*}
admits a unique minimizer $\w{\theta}_{S^*}$ which satisfies for each $j \in\{1, \ldots,M\}$,
\begin{equation*}
(\w{\theta}_j)_{S_{\theta_j}^*} = (\theta_j)_{S_{\theta_j}^*} +
\mathbb{H}_{S_{\theta_j}^*, S_{\theta_j}^*}^{-1} (Z_j)_{S_{\theta_j}^*} - \dfrac{n_k \kappa}{2} \mathbb{H}_{S_{\theta_j}^*, S_{\theta_j}^*}^{-1} {\rm sign}(({\theta^*}_{j})_{S_{\theta_j}^*}).
\end{equation*}

Hence, in view of Steps~2, with the choice of $ \kappa =  \dfrac{\log^4(nM^2)}{\sqrt{n}}$, we then have shown that there is a unique solution $\w{\theta}$ of $\min_{\theta} \mathcal{C}(\theta)$ which satisfies on $\Omega_n$
\begin{equation*}
\w{\theta}_{S^{*c}} = 0, \;\; {\rm and} \;\; {\rm sign}(\w{\theta}_{S^*}) = {\rm sign}(\theta^*_{S^*}),
\end{equation*}
and
\begin{equation*}
\left\|\w{\theta}_{S^*} - \theta_{S^*}\right\|_{\infty}  \leq \dfrac{C\Lambda_0\max_j \sqrt{|S_{\theta_j}^*|}\log^4(nM^2)}{\sqrt{n}}. 
\end{equation*}

\section{Proofs for the rate of convergence of \texttt{ERMLR} algorithm}
\label{app:RatesofCve}

\setcounter{lemme}{0}
\renewcommand{\thelemme}{D.\arabic{lemme}}
\setcounter{prop}{0}
\renewcommand{\theprop}{D.\arabic{prop}}

We first establish a technical result in Section~\ref{subsec:classifResultTechRes}, then rate of convergence of the \texttt{ERMLR} algorithm is given in Section~\ref{subsec:classifResultProof}. 

\subsection{Technical result}
\label{subsec:classifResultTechRes}


We recall that the set $\Theta_n$ is defined as follows

\begin{equation*}
\Theta_n := \left\{\theta = (\mu, A) \in \mathbb{R}_{+}^{M}\times \mathbb{R}_{+}^{M^2}, \mu_j \in \left[\dfrac{1}{n}, \log(n)\right], j \in [M], \left\|A\right\|_F \leq n \right\}.
\end{equation*}
We also introduce the set $\Pi$ of conditional probabilities  

\begin{equation*}
{{\Pi}} := \left\{\pi_{{p },\theta}= 
\left(\frac{ {p}_k{\rm e}^{F_{\theta_k}(\cdot)}}{\sum_{k'=1}^K {p}_{k'} {\rm e}^{F_{\theta_{k'}}(\cdot)}}\right)_{k \in [K]}
: \; \theta =(\theta_1, \ldots, \theta_K) \in {\Theta}^K_n, ~\sum_{k=1}^K p_k = 1, ~\min_k {p_k}> \frac{p_0}{2}
\right\}
\end{equation*}

The following result provides a bound on $\ell_1$-distance between two elements of the set $\Pi$. It shows that this distance can be bounded by the distance between the corresponding parameters of the associated model.

\begin{prop}\label{prop:distPi}
Let $\pi = \pi_{p,\theta}$ and $\pi' = \pi_{p^{'},\theta^{'}}$ two elements of $\Pi$. Grant Assumptions~\ref{ass:h}, \ref{ass:mu} and \ref{ass:prob}, 
the following holds
\begin{eqnarray*}
\mathbb{E}\big[ \left\|\pi -  \pi'\right\|_1\big] &\leq & \frac{K}{p_0} \|{{p}-{p}'}\|_{1} + CK^2n^2\log(n)\left(\sqrt{M} \max_{k\in [K]}\|\mu_k-\mu_k'\|_1 + M  \max_{k\in [K]}\|A_k- A_k'\|_F\right),
\end{eqnarray*}
where $C$ is a constant depending on $T$, $\mu_0$, $\mu_1$ and $\|h\|_\infty$.  
\end{prop}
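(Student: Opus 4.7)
The plan is to use the triangle inequality to separate the variations in $p$ and $\theta$, then reduce the $\theta$-change to a control of $F_{\theta_k}(\cT) - F_{\theta_k'}(\cT)$ in expectation. Write
\[
\|\pi_{p,\theta} - \pi_{p',\theta'}\|_1 \leq \|\pi_{p,\theta} - \pi_{p',\theta}\|_1 + \|\pi_{p',\theta} - \pi_{p',\theta'}\|_1.
\]
The first piece is deterministic: setting $w_k := e^{F_{\theta_k}(\cT)}$, the softmax expression $\pi_k = p_k w_k / \sum_j p_j w_j$ combined with the uniform lower bound $\min_k p_k, \min_k p_k' > p_0/2$ yields $\sum_j p_j w_j \geq (p_0/2) \max_j w_j$; straightforward algebra then gives $\|\pi_{p,\theta} - \pi_{p',\theta}\|_1 \leq (K/p_0)\|p - p'\|_1$, producing the first term of the claimed bound.

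For the $\theta$-change at fixed $p'$, I exploit that the weighted softmax is Lipschitz in the vector $F = (F_{\theta_k})_{k \in [K]}$. Computing $\partial \pi_k / \partial F_i = \pi_k(\delta_{ki} - \pi_i)$ (the weights $p_k'$ drop out of the derivative) gives $\sum_k |\partial \pi_k/\partial F_i| = 2\pi_i(1-\pi_i) \leq 1/2$, hence
\[
\|\pi_{p',\theta} - \pi_{p',\theta'}\|_1 \leq \frac{1}{2}\sum_{k=1}^K |F_{\theta_k}(\cT) - F_{\theta_k'}(\cT)|.
\]
It remains to bound each summand in expectation. Decompose $F_{\theta_k} - F_{\theta_k'}$ into the intensity-integral contribution and the log-at-jumps contribution. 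The affine dependence of $\lambda_{k,j,\theta}$ on $(\mu_k, A_k)$ gives
\[
\lambda_{k,j,\theta}(s) - \lambda_{k,j,\theta'}(s) = (\mu_{k,j} - \mu_{k,j}') + \sum_{j'=1}^M (a_{k,j,j'} - a_{k,j,j'}') H_{j'}(s).
\]
Cauchy-Schwarz over $(j,j')$ converts the $A$-contribution of the integral term into $\|A_k - A_k'\|_F \cdot \sqrt{\sum_{j,j'}(\int_0^T H_{j'})^2}$, and $\int_0^T H_{j'}(s)\dd s \leq T\|h\|_\infty N_{j'}(T)$ (Assumption~\ref{ass:h}). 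For the log term, the constraint $\mu_{k,j} \geq 1/n$ imposed by $\Theta_n$ ensures $\lambda_{k,j,\theta}(t) \geq 1/n$, so $\log$ is $n$-Lipschitz on the admissible range and the same linearization applies, weighted by an extra $n$ and summed over the jump times $T_\ell \in \cT_{T,j}$.

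Taking the expectation is where the statistical assumptions enter, and this is the main obstacle. Assumption~\ref{ass:expomoments} gives $\sup_j \E[N_j(T)^q] \leq C_q$ for every finite $q$, and $H_{j'}(t) \leq \|h\|_\infty N_{j'}(T)$ converts moments of $H$ into moments of counts. Combined with Cauchy-Schwarz, sums over $(j,j')$ produce the $M$-factor multiplying $\|A_k - A_k'\|_F$, and sums over $j$ produce the $\sqrt{M}$-factor multiplying $\|\mu_k - \mu_k'\|_1$. Summing over $k$ and passing from $\sum_k$ to $K\max_k$ yields the $K^2$ factor. The $n^2\log(n)$ prefactor is the delicate part: one factor $n$ comes from the Lipschitz constant of $\log$ on $[1/n,\infty)$, and the remaining $n\log(n)$ comes from the uniform bounds $\mu_{k,j} \leq \log(n)$ and $\|A_k\|_F \leq n$ on $\Theta_n$, which govern the size of the random prefactors (intensities, cumulative counts, sums of jump times) multiplying the parameter differences. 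Collecting these bounds produces the claimed inequality.
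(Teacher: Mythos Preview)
Your argument follows the same architecture as the paper's proof: triangle inequality to separate the $p$- and $\theta$-variations, Lipschitz property of the softmax, decomposition of $F_{\theta_k}-F_{\theta_k'}$ into the integral part and the log-at-jumps part, and moment control via Assumption~\ref{ass:expomoments}. The one genuine difference is how you handle the log term. The paper writes
\[
\left|\log\frac{\lambda_{j,\theta_k}}{\lambda_{j,\theta_k'}}\right|\le \left|\log\frac{\mu_{k,j}}{\mu_{k,j}'}\right|+\left|\frac{\lambda_{j,\theta_k}}{\mu_{k,j}'}-\frac{\lambda_{j,\theta_k'}}{\mu_{k,j}}\right|,
\]
then applies Cauchy--Schwarz to pass to an $L^2$ bound on $\sum_j\int|\log(\cdot)|\,\dd N_j$, which is what generates the $n^2\log(n)$ and the $\sqrt{M}$ on $\|\mu_k-\mu_k'\|_1$. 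Your direct use of the $n$-Lipschitz property of $\log$ on $[1/n,\infty)$ is simpler and in fact sharper: carried through, it yields a prefactor of order $n$ (not $n^2\log(n)$), no $\sqrt{M}$ in front of $\|\mu_k-\mu_k'\|_1$, and only one factor of $K$ from the softmax derivative (your bound $\sum_k|\partial\pi_k/\partial F_i|\le 1/2$ is tighter than the paper's $\le 1$ per entry summed to $K$). Since a smaller bound implies the stated one, your proof is valid.

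The only weak spot is the last paragraph, where you try to manufacture the extra $n\log(n)$ and $\sqrt{M}$ from the upper bounds $\mu_{k,j}\le\log(n)$ and $\|A_k\|_F\le n$. Those upper bounds are not used anywhere in your argument (only the lower bound $\mu_{k,j}\ge 1/n$ enters, via the Lipschitz constant of $\log$), so this accounting is spurious. It would be cleaner to state the tighter bound your method actually gives and note that it implies the proposition.
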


\begin{proof}
Let us consider $\pi, \pi' \in \Pi$  with respective parameters $( p, \theta)$, and $( p', \theta')$. We have that
\begin{eqnarray}
\label{eq:eqdistPiFirstDecomp}
\left\|\pi(\cT)-\pi'(\cT)\right\|_1 &\leq &\left\|\pi(\cT) - \pi_{{p},\theta'}(\cT)\right\|_1 + \left\|\pi_{{p},\theta'}(\cT)- \pi'(\cT)\right\|_1.
\end{eqnarray}  
Since for any $k$, $j$ and $(x_1, \ldots, x_K)$, 
\begin{equation*}
\left| \dfrac{\partial \phi^{{p}}_k(x_1, \ldots,x_K)}{\partial p_j}\right| \leq  \dfrac{1}{p_0},
\end{equation*}
we deduce by mean value inequality
\begin{equation*}
\left\|\pi_{{p},\theta'}(\cT)- \pi'(\cT)\right\|_1 \leq \dfrac{K}{p_0} \left\|{p}-{p}'\right\|_1.
\end{equation*}
Besides for any $k$, $j$ and ${p}$,
\begin{equation*}
\left| \dfrac{\partial \phi^{{p}}_k(x_1, \ldots,x_K)}{\partial x_j}\right| \leq  1,
\end{equation*}
we also deduce
$$\left\|\pi(\cT) - \boldsymbol\pi_{{p},\theta'}(\cT)\right\|_1 
\leq K\sum_{k=1}^K \left| F_{{\theta_k}}(\cT)-F_{{\theta_k'}}(\cT) \right|.
$$
Therefore, from Equation~\eqref{eq:eqdistPiFirstDecomp}, we obtain
$$
\mathbb{E} \left[\left\|\pi(\cT)-\pi'(\cT)\right\|_1\right] \leq
 \dfrac{K}{p_0} \left\|{p}-{p}'\right\|_1 
 + K\sum_{k=1}^K \mathbb{E}\left[\left| F_{\theta_k}(\cT)-F_{\theta_k'}(\cT) \right|\right].
$$
Hence, it remains to bound the second term in the {\it r.h.s.} of the above inequality.
Using Cauchy-Schwartz inequality, for each $k$, we have that
\begin{align}
\label{eq:eqdecompFdistPi}
& \mathbb{E}\left[\left| F_{\theta_k}(\cT)-F_{\theta_k'}(\cT) \right|\right] \nonumber\\
& =  \E\left[ \left| \sum_{j=1}^M\left(\int_0^T \log\left( \frac{\lambda_{j,\theta_k}(t)}{\lambda_{j,{\theta_k'}}(t)} \right) \dd N_j(t) - \int_0^T\left(\lambda_{j,{\theta_k}}(t)- \lambda_{j,{\theta_k'}}(t)\right) \dd t \right)\right| \right]\nonumber\\
 &\leq   \E\left[ \left(\sum_{j=1}^M\int_0^T \left|\log\left( \frac{\lambda_{j,{\theta_k}}(t)}{\lambda_{j,\theta'_{k}}(t)} \right)\right| \dd N_j(t)\right)^2 \right]^{1/2} + \E\left[\sum_{j=1}^M\int_0^T\left|\lambda_{j,{\theta_k}}(t) -\lambda_{j,\theta'_{k}}(t)\right|  \dd t  \right].
\end{align}
Now, we observe that
\begin{equation*}
\left|\lambda_{j,\theta_k}(t) - \lambda_{j,{\theta_k'}}(t)\right| \leq |\mu_{k,j}-\mu'_{k,j}| +\|h\|_\infty\sum_{j'=1}^M |a_{k,j,j'}-a'_{k,j,j'} |N_{j'}(T). 
\end{equation*}
Therefore, we deduce
\begin{equation}
\label{eq:eqDecompPiEq1}
\E\left[\sum_{j=1}^M\int_0^T\left|\lambda_{j,{\theta_k}}(t) -\lambda_{j,\theta'_{k}}(t)\right|  \dd t  \right] 
\leq T\sum_{j=1}^M|\mu_{k,j}-\mu'_{k,j}| 
+  T \|{h}\|_\infty \sum_{j'=1}^M \sum_{j=1}^M |a_{k,j,j'}-a'_{k,j,j'}| \E\left[N_{j'}(T)\right].
\end{equation}

Now, we bound the first term in the {\it r.h.s.} of Equation~\eqref{eq:eqdecompFdistPi}.
Using that $x \mapsto \log(1+x)$ is a Lipschitz function, we obtain: 
\begin{align}\label{eq:eqDecompPiEq2}
\left|\log\left( \frac{\lambda_{j,{\theta_k}}(t)}{\lambda_{j,\theta'_{k}}(t)} \right)\right| 
& \leq  
 \left|\log\left(\frac{\mu_{k,j}}{\mu'_{k,j}}\right)\right|
  + \left|\frac{\lambda_{j,{\theta_k}}(t)}{\mu'_{k,j}} - \frac{\lambda_{j,{\theta_k'}}(t)}{\mu_{k,j}} \right| \nonumber\\ 
& \leq  n\left|\mu_{k,j} - \mu'_{k,j}\right| + n^2\left|\mu_{k,j}\lambda_{j,{\theta_k}}(t)- \mu'_{k,j}\lambda_{j,{\theta_k'}}(t) \right|  \nonumber\\ 
& \leq    n\left|\mu_{k,j} - \mu'_{k,j}\right| + n^2 \left(|\mu_{k,j} -\mu'_{k,j}|\lambda_{j,\theta'_{k}}(t) + \mu_n' \left|\lambda_{j,{\theta_k}}(t)-\lambda_{j,\theta'_{k}}(t) \right|\right) \nonumber\\ 
& \leq    n\left|\mu_{k,j} - \mu'_{k,j}\right| + n^2 \Bigg(|\mu_{k,j} -\mu'_{k,j}|\lambda_{j,\theta'_{k}}(t) \nonumber\\
& \hspace{1em} + \log(n) \Big(|\mu'_{k,j}-\mu_{k,j}| + \|h\|_{\infty} \sum_{j'=1}^M N_{j'}(T)|a_{k,j,j'}-a'_{k,j,j'}|\Big)\Bigg).
\end{align}

Besides, applying the Doob's decomposition for the processes $N_j, j \in [M]$, and the Cauchy-Schwartz's inequality, we get  

\begin{eqnarray}\label{eq:eqDecompPiEq3}
\E\left[ \left(\sum_{j=1}^M\int_0^T \left|\log\left( \frac{\lambda_{j,{\theta_k}}(t)}{\lambda_{j,{\theta_k'}}(t)} \right)\right| \dd N_j(t)\right)^2 \right] &\leq &  M \sum_{j=1}^M\E\left[  \int_0^T \log^2\left( \frac{\lambda_{j,{\theta_k}}(t)}{\lambda_{j,\theta'_{k}}(t)} \right) \lambda^{*}_{Y,j}(t)  \dd t \right] \nonumber\\
&+&  M \sum_{j=1}^M \E\left[ \left( \int_0^T\left| \log\left( \frac{\lambda_{j,{\theta_k}}(t)}{\lambda_{j,{\theta_k'}}(t)} \right) \right|
 \lambda^{*}_{Y,j}(t)  \dd t \right)^2 \right].
\end{eqnarray}

From Assumption~\ref{ass:expomoments}, we have $\E\left[\left( \lambda^{*}_{Y,j}(t)\right)^2\right]<\infty$. Therefore, the first term in the {\it r.h.s.} in Equation~\eqref{eq:eqDecompPiEq3} can be bounded as follows
\begin{eqnarray*}
\E\left[  \int_0^T \log^2\left( \frac{\lambda_{j,\theta_{k}}(t)}{\lambda_{j,\theta'_{k}}(t)} \right) \lambda^{*}_{Y,j}(t) \dd t \right] 
&\leq & \int_0^T \E \left[ \log^4\left( \frac{\lambda_{j,\theta_{k}}(t)}{\lambda_{j,\theta'_{k}}(t)} \right)  \right]^{1/2} \E\left[\left( \lambda^{*}_{Y,j}(t)\right)^2\right]^{1/2}  \dd t\\
& \leq & C T \sup_{t \in [0,T]} \E \left[ \log^4\left( \frac{\lambda_{j,\theta_{k}}(t)}{\lambda_{j,\theta'_{k}}(t)} \right)  \right]^{1/2}.
\end{eqnarray*}
Similarly, we obtain:
\begin{eqnarray*}
 \E\left[ \left( \int_0^T\left| \log\left( \frac{\lambda_{j,\theta_{k}}(t)}{\lambda_{j,\theta'_{k}}(t)} \right) \right|
 \lambda^{*}_{Y,j}(t) \dd t \right)^2 \right]
  &\leq & T \E \left[\int_0^T \log^2\left( \frac{\lambda_{j,\theta_{k}}(t)}{\lambda_{j,\theta'_{k}}(t)} \right)  \left( \lambda^{*}_{Y,j}(t)\right)^2 \dd t \right] \\
 &\leq & C T^2 \sup_{t \in [0,T]} \E \left[ \log^4\left( \frac{\lambda_{j,\theta_{k}}(t)}{\lambda_{j,\theta'_{k}}(t)} \right)  \right]^{1/2}.
\end{eqnarray*}
Then, by Assumption~\ref{ass:h}, from Equation~\eqref{eq:eqDecompPiEq2} and Equation~\eqref{eq:eqDecompPiEq3}, we get 

\begin{align*}
 & \E\left[ \left(\sum_{j=1}^M\int_0^T \left|\log\left( \frac{\lambda_{j,\theta_{k}}(t)}{\lambda_{j,\theta'_{k}}(t)} \right)\right| \dd N_j(t)\right)^2 \right] \\
 & \leq   C T^2 M\sum_{j=1}^M \sup_{t \in [0;T]} \E \left[
 |\mu_{k,j}-\mu_{k,j}'|^4 \left( n+n^2\lambda_{j,\theta_{k}'}(t)+ n^2\log(n)\right)^4 \right.\\
 &\hspace{1em} \left. + n^8\log(n)^{4} \|h\|_\infty^4 \left( \sum_{j'=1}^M N_{j'}(T) |a_{k,j,j'}-a'_{k,j,j'}|\right)^4
 \right]^{1/2}\\
  & \leq   C T^2 M\sum_{j=1}^M \left(\left[ n^4\sup_{t \in [0;T]} \E \left[(\lambda_{j,\theta_{k}'}(t))^4\right]^{1/2}+ n^2+ n^4\log(n)^2\right]
 |\mu_{k,j}-\mu_{k,j}'|^2 \right.\\
 &\hspace{1em} \left.+ C n^4\log(n)^{2}\E\left[ \left( \sum_{j'=1}^M N_{j'}(T) |a_{k,j,j'}-a'_{k,j,j'}|\right)^4\right]^{1/2}\right) \\
\end{align*} 
where $C$ is a constant depending on $\mu_0$, $\mu_1$ and $\|h\|_\infty$. 
In view of Assumption~\ref{ass:expomoments} $\E \left[(\lambda_{j,\theta_{k}'}(t))^4\right] \leq C$. Therefore, from the above equation, and Cauchy Schwartz's inequality, we deduce
\begin{multline*}
E\left[ \left(\sum_{j=1}^M\int_0^T \left|\log\left( \frac{\lambda_{j,\theta_{k}}(t)}{\lambda_{j,\theta'_{k}}(t)} \right)\right| \dd N_j(t)\right)^2 \right]  \leq 
CT^2 M \left(n^4+n^2+n^4\log(n)^{2}\right) \sum_{j=1}^M |\mu_{k,j}-\mu_{k,j}'|^2 \\
+ CT^2M n^4\log(n)^{2}  \E\left[ \left( \sum_{j'=1}^M N_{j'}(T) ^2\right)^2\right]^{1/2} \sum_{j=1}^M\sum_{j'=1}^M|a_{k,j,j'}-a'_{k,j,j'}|^2.
\end{multline*}
From Assumption~\ref{ass:expomoments}
we have that $ \E\left[ \left( \sum_{j'=1}^M N_{j'}(T) ^2\right)^2\right] \leq CM^2$.
Thus, gathering Equations \eqref{eq:eqdecompFdistPi} and \eqref{eq:eqDecompPiEq1}, it comes  
\begin{eqnarray*}
\E[\|\pi-\pi'\|_1] & \leq & \frac{K}{p_0}\|p-p'\|_1 \\
&&+ K C\sum_{k=1}^K \left( \sum_{j=1}^M |\mu_{k,j}-\mu_{k,j}'|+\sum_{j=1}^M\sum_{j'=1}^M|a_{k,j,j'}-a'_{k,j,j'}| \right)\\
&& + K C n^2\log(n) \sum_{k=1}^K \left( M  \sum_{j=1}^M |\mu_{k,j}-\mu_{k,j}'|^2+M^{2}\sum_{j=1}^M\sum_{j'=1}^M|a_{k,j,j'}-a'_{k,j,j'}|^2 \right)^{1/2}
\end{eqnarray*}
with $C$ depending on $\mu_0$, $\mu_1$, $\|h\|_\infty$ and $T$.
Finally, using that $\|x\|_2 \leq \|x\|_1 \leq \sqrt{d} \|x\|_2$ for $x \in \R^d$, we obtain
\begin{eqnarray*}
\E\big[\|\pi-\pi'\|_1\big] & \leq & \frac{K}{p_0}\|p-p'\|_1 \\
&&+ K^2 Cn^2\log(n)^2\max_{k\in [K]}\left( \sum_{j=1}^M |\mu_{k,j}-\mu_{k,j}'|+ \ \sum_{j=1}^M\sum_{j'=1}^M|a_{k,j,j'}-a'_{k,j,j'}|\right) \\
&& + K^2 C n^2\log(n)^2 \max_{k\in [K]}\left(\sqrt{M} \sum_{j=1}^M |\mu_{k,j}-\mu_{k,j}'|+M  \|A_k- A_k'\|_F\right)
\end{eqnarray*}
thus
\begin{eqnarray*}
\E\big[\|\pi-\pi'\|_1\big] 
&\leq & \frac{K}{p_0}\|p-p'\|_1 + K^2 C n^2\log(n)\sqrt{M}\max_{k\in [K]}\|\mu_k-\mu_k'\|_1 \\
&&+ K^2 C M n^2\log(n)\max_{k\in [K]}\|A_k- A_k'\|_F.
\end{eqnarray*}

Finally, combining the above equation, Equations~\eqref{eq:eqdecompFdistPi} and~\eqref{eq:eqDecompPiEq1} yields the desired result.


%
%
\end{proof}


\subsection{Proof of Theorem~\ref{thm:riskERMsupport}}
\label{subsec:classifResultProof}

We begin this section by a lemma that provides a bound on the $\varepsilon$-covering number of the set $\hat{\Theta}$ defined in Equation~\ref{eq:thetaHatSet}.

\begin{lemme}\label{lem:epsnet}
Let $\varepsilon >0$.
There exists an $\varepsilon$-net $\mathcal{M}_{\varepsilon} \subset {\hat{\Theta}}$
with
\begin{equation*}
|{\cM}_{\varepsilon}|  \leq \left( \frac{(\log(n)-1/n)M}{\varepsilon}\right)^{MK} \left( \frac{3 n}{\varepsilon}\right)^{  \sum_{k}\w{S}_k}. 
\end{equation*}
In particular, for all $(\mu,A) \in \w{\Theta}$  there exists $(\mu_{\varepsilon}, A_{\varepsilon}) \in \mathcal{M}_{\varepsilon}$ s.t. $\max_{k\in [K]}\|\mu_k-\mu_{k,\varepsilon}\|_1 \leq \varepsilon$ and $\|A_k-A_{k,\varepsilon}\|_F \leq \varepsilon$. 
\end{lemme}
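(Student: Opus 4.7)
The plan is to build $\mathcal{M}_\varepsilon$ as a cartesian product of independent nets across the $K$ classes, and within each class to split the parameter $\theta_k = (\mu_k, A_k)$ into a baseline block covered in $\ell_1$-norm and an adjacency block covered in Frobenius norm.

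First, for each class $k \in [K]$ and each coordinate $j \in [M]$, I would discretize the interval $[1/n, \log(n)]$ by a uniform grid of step $\varepsilon/M$, yielding at most $M(\log(n)-1/n)/\varepsilon$ points per coordinate. Taking the cartesian product over $j \in [M]$ delivers an $\ell_1$-net for $\mu_k$ of cardinality $\bigl(M(\log(n)-1/n)/\varepsilon\bigr)^{M}$: indeed, if every coordinate is approximated within $\varepsilon/M$, the $\ell_1$ deviation is at most $M\cdot(\varepsilon/M)=\varepsilon$. Taking the product over $k \in [K]$ then produces the first factor of the claimed bound.

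Second, on $\hat{\Theta}$ the matrix $A_k$ is supported on $\hat{S}_k$ and satisfies $\|A_k\|_F \leq n$ by definition of $\Theta_n$ recalled in Section~\ref{subsec:classifResultTechRes}. Identifying $A_k$ with its vector of free entries, it lives in a Euclidean ball of radius $n$ in $\mathbb{R}^{|\hat{S}_k|}$, whose Euclidean norm coincides with the Frobenius norm on $A_k$. The classical volumetric covering number bound for Euclidean balls then supplies an $\varepsilon$-net of this ball of cardinality at most $(3n/\varepsilon)^{|\hat{S}_k|}$. Taking products over $k \in [K]$ yields the second factor $(3n/\varepsilon)^{\sum_k |\hat{S}_k|}$.

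The cartesian product of these $2K$ nets produces an $\varepsilon$-net $\mathcal{M}_\varepsilon \subset \hat{\Theta}$ such that for every $\theta = (\mu,A) \in \hat{\Theta}$ there exists $(\mu_\varepsilon, A_\varepsilon) \in \mathcal{M}_\varepsilon$ satisfying simultaneously $\max_{k}\|\mu_k - \mu_{k,\varepsilon}\|_1 \leq \varepsilon$ and $\max_k \|A_k - A_{k,\varepsilon}\|_F \leq \varepsilon$, and whose cardinality is the product of the two factors above. No substantive obstacle is expected here; the only care is in calibrating the mesh of the baseline grid to $\varepsilon/M$ so that the accumulated $\ell_1$ error across the $M$ coordinates matches the target $\varepsilon$, while the sparsity constraint on $A_k$ is what allows the covering exponent for the adjacency block to be only $|\hat{S}_k|$ rather than $M^2$, which is precisely the high-dimensional gain exploited later to obtain the rate of Theorem~\ref{thm:riskERMsupport}.
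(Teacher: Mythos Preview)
Your proposal is correct and follows essentially the same approach as the paper: both construct the net as a Cartesian product, covering each $\mu_k$ coordinatewise on $[1/n,\log(n)]$ with mesh $\varepsilon/M$ to obtain an $\ell_1$-net of size $\bigl(M(\log(n)-1/n)/\varepsilon\bigr)^{M}$, and covering each $A_k$ via the standard volumetric bound $(3n/\varepsilon)^{|\w S_k|}$ on the Frobenius ball of radius $n$ in $\R^{|\w S_k|}$. Your remark about calibrating the baseline mesh to $\varepsilon/M$ and the role of sparsity in reducing the exponent from $M^2$ to $|\w S_k|$ matches exactly the mechanism used in the paper.
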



\begin{proof}[Proof of Lemma \ref{lem:epsnet}]
First, we observe that the set
$$\left\{ \frac{1}{n}+ k\frac{(\log(n)-1/n)}{ \lceil \frac{M(\log(n)-1/n)}{\varepsilon}  \rceil}, ~ k \in \left\{1, \ldots,  \frac{M(\log(n)-1/n)}{\varepsilon}-1 \right\}  \right\}$$
is and $\varepsilon/M$-cover of the interval $[1/n \log(n)]$. 
Therefore, we deduce that there exists
$\mathcal{M}_{\varepsilon, \mu}$ an $\varepsilon$-cover of 
$\{\mu \in  \R^M, ~s.t. ~\mu \in \Theta_n \}$ for $\|\cdot\|_1$, such that 
\begin{equation}{\label{eq:epsnetmu}}
\mathcal{M}_{\varepsilon, \mu} \leq \left( \frac{(\log(n)-1/n)M}{\varepsilon}\right)^{M}.
\end{equation}

Let $k \in [K]$. 
For $\varepsilon>0$, the covering number of the Euclidean ball centered in 0 and with radius $n$ in $\R^{\w{S}_k}$, satisfies
$$\mathcal{N}(\varepsilon, \mathcal{\bar{B}}(0, n), \|.\|_2) \leq \left( \frac{3n}{\varepsilon}\right)^{\w{S}_k}.$$ 
Hence, we deduce that there exists $\mathcal{M}_{\varepsilon, A,k}$ an $\varepsilon$-cover of $\{A \in \Theta_n, ~s.t. ~\text{supp}(A)= \w{S}_k \}$, for $\|\cdot\|_F$, such that
\begin{equation}{\label{eq:epsnetA}}
\mathcal{M}_{\varepsilon, A,k} \leq \left(\frac{3n}{\varepsilon} \right)^{\w{S}_k}.
\end{equation}

From Equation \eqref{eq:epsnetmu} and \eqref{eq:epsnetA} we obtain the desired result. 
\end{proof}

\begin{proof}[Proof of Theorem~\ref{thm:riskERMsupport}]
We first recall that the construction of the \texttt{ERMLR} algorithm is based on a dataset 
$\mathcal{D}_n = \{(\cT_T^{(i)}, Y^{(i)}), i = 1, \ldots, 2n\}$ of size $2n$ which is split into two independent dataset of same size $n$ that are denoted respectively $\mathcal{D}_n^{(1)}$ and $\mathcal{D}_n^{(2)}$.

Based on the first sample $\mathcal{D}_n^{(1)}$, we estimate the vector of weights $p^*$ by its empirical frequencies $\hat{p}$. Hence for each $k$, we have 
\begin{equation*}
\hat{p}_k = \dfrac{1}{n} \sum_{i = 1}^n \one_{\{Y^{(i)} = k\}}  
\end{equation*}
Then, based on sample $\mathcal{D}_n^{(2)}$, we build the estimator $\w{S} := \left(\w{S}_1, \dots, \w{S}_K\right)$ as described in Section~\ref{subsec:estimationSupport}.
Besides, we also build the estimator of the vector of score function $\hat{f} = f_{\hat{\theta}^{R}}$, and $\hat{g}$ its associated classifier.
Since $\mathcal{D}_n^{(1)}$ and $\mathcal{D}_n^{(2)}$ are independent, we have that 
$\hat{p}$ is independent on $\w{f}$ and $\w{g}$.

Let us introduce the set $\mathcal{A} = \left\{{ \w{p}} :\; \min({\w{p}}) \geq \frac{p_0}{2}\right\}$. Note that on $\mathcal{A}^c$ we have 
\begin{equation*}
|\min({p^*}) -  \min({ \w{p}})| \geq \frac{p_0}{2}, 
\end{equation*}
which implies that there exists $k\in\mathcal{Y}$ s.t. $|p^*_k - \w{p}_k| \geq \frac{p_0}{2}$. Thus, using Hoeffding's inequality we get
\begin{eqnarray}\label{eq:hoeffding1}
\mathbb{P}(\mathcal{A}^c) &\leq & \sum_{k=1}^{K} \mathbb{P}\left(|p^*_k - \w{p}_k| \geq \frac{p_0}{2}\right) \nonumber\\
    & \leq& 2K {\rm e}^{-np_0^2/2}.
\end{eqnarray}



Now, let us work on $\Omega = \mathcal{A}\bigcap \{\hat{S} = S^*\}$, and denote 
\begin{equation}\label{eq:deltan}
\Delta_n :=  \sum_{k=1}^K (\w{p}_k-p_k^*)^2,
\end{equation}
which is a random variable independent from $\cD^{(2)}_{n}$.
We also recall that for each $\theta \in \hat{\Theta}$,  the score function $f_{\theta}$ is defined as follows
\begin{equation*}
f_{\theta} (\mathcal{T}_T) = 2\pi_{k,\w{p}, \theta}(\mathcal{T}_T) -1, ~k \in [K].
\end{equation*}
We introduce
\begin{equation*}
\tilde{\theta} = \argmin{\theta \in \hat{\Theta}} \mathcal{R}_2(f_{\theta}).   
\end{equation*}
The oracle counterpart of $\hat{f}$.
Our aim is to control
\begin{equation}
\label{eq:eqDecompRisk1}
\E\left[ \cR_2(\w{f})- \cR_2({f}^*)\right] = \E\left[ \left(\cR_2(\w{f})- \cR_2({f}^*) \right)\one_{\{\Omega\}}\right]+ \E \left[ \left(\cR_2(\w{f})- \cR_2({f}^*) \right)\one_{\{\Omega^{c}\}}\right].
\end{equation}
Since for each $\theta \in \hat{\Theta}$ defined by~\eqref{eq:thetaHatSet}, $\cR_2(f_{\theta})$ is bounded, from Theorem~\ref{prop:convsupport}, and Equation~\eqref{eq:hoeffding1}, we deduce 
that
\begin{equation}
\label{eq:eqDecompRisk3}
\E[ \left(\cR_2(\w{f})- \cR_2({f}^*) \right)\one_{\{\Omega^{c}\}}] \leq C \mathbb{P}\left(\Omega^{c}\right)  \leq C \left(\dfrac{1}{n}+ \exp\left(-np_0^2/2\right) \right).
\end{equation}
Therefore, it remains to bound the first term in the {\it r.h.s.} of Equation~\eqref{eq:eqDecompRisk1}. Hence, we work on the set $\Omega$.
We consider the following decomposition
\begin{equation}
\label{eq:eqDecompRisk2}
\mathcal{R}_2(\hat{f}) -  \mathcal{R}_2(f^*)  =  \left(\mathcal{R}_2(\hat{f}) - \mathcal{R}_2(f_{\tilde{\theta}})\right) + \left(\mathcal{R}_2(f_{\tilde{\theta}}) -\mathcal{R}_2(f^*) \right)   
\end{equation}

In a first step, we control the second term in the {\it r.h.s.} of the above equation.
For $n$ large enough, we observe that on $\Omega$, $\theta^* \in \hat{\Theta}$.
Therefore, from the definition of $\tilde{\theta}$, we deduce

\begin{eqnarray*}
\mathcal{R}_2(f_{\tilde{\theta}}) -\mathcal{R}_2(f^*) & = & \mathcal{R}_2(f_{\tilde{\theta}})-\mathcal{R}_2(f_{\theta^*})+\mathcal{R}_2(f_{\theta^*})- \mathcal{R}_2(f^*)\\
& \leq & \mathcal{R}_2(f_{\theta^*})- \mathcal{R}_2(f^*).
\end{eqnarray*}
Then on $\Omega$, we deduce from the mean value theorem that
\begin{equation}
\label{eq:eqDecompRisk4}
\mathcal{R}_2(f_{\tilde{\theta}}) -\mathcal{R}_2(f^*) \leq   \mathcal{R}_2(f_{\theta^*})- \mathcal{R}_2(f^*) \leq C \Delta_n,
\end{equation}
with $\Delta_n$ given in Equation \eqref{eq:deltan}.
Since, $\mathbb{E}\left[\Delta_n\right] \leq \dfrac{C}{n}$, from Equation~\eqref{eq:eqDecompRisk2}, we deduce that
\begin{equation}
\label{eq:eqDecompRisk4bis}
\E\left[ \left(\cR_2(\w{f})- \cR_2({f}^*) \right)\one_{\{\Omega\}}\right] \leq \E\left[\mathcal{R}_2(\hat{f}) - \mathcal{R}_2(f_{\tilde{\theta}}) \one_{\{\Omega\}}\right] + \dfrac{C}{n}.  
\end{equation}
Now, we focus on the first term in the {r.h.s.} of Equation~\eqref{eq:eqDecompRisk2}.
We denote 
$$D_{{f}}:=\cR_2({f})-\cR_2(f_{\tilde{\theta}}), \quad \mbox{and} \quad \w{D}_{{f}}:=\w{\cR}_2({f})-\w{\cR}_2(f_{\tilde{\theta}}).$$
And we want to control $\E[D_{\w{f}}].$
By Lemma \ref{lem:epsnet}, there exists a subset ${\cM_{\varepsilon}} \subset \hat{\Theta}$ such that for $\hat{\theta}^{R} = \left(\hat{\mu}, \hat{A}\right)$, there exists $\theta_{\varepsilon} = (\mu_{\varepsilon},{ A}_{\varepsilon}) \in {\cM}_{\varepsilon}$
satisfying
\begin{equation*}
\max_{k\in [K]}\|\mu_{k,\varepsilon}-\hat{\mu}_k\|_1 \leq \varepsilon \quad {\rm and} \quad \max_{k\in [K]}\|A_{k,\varepsilon} - \hat{A}_k\|_{F} \leq \varepsilon.
\end{equation*}
Then, the following decomposition holds
\begin{eqnarray*}
D_{\w{f}} &\leq & D_{\w{f}}- 2 \w{D}_{\w{f}}\\
&=& (D_{\w{f}}-D_{{f}_{\theta_{\varepsilon}}})+ (2\w{D}_{{f}_{\theta_{\varepsilon}}}-2\w{D}_{\w{f}}) + (D_{{f}_{\theta_{\varepsilon}}}-2\w{D}_{{f}_{\theta_{\varepsilon}}})\\
&=:& T_1+ T_2 +T_3. 
\end{eqnarray*}
Applying Proposition~\ref{prop:distPi}  with $\varepsilon= 1/(n^3 M \log(n))$  we get
\begin{equation*}
\E\left[T_i\right] \leq \frac{C}{n}, \quad \mbox{for } \, i=1,2.
\end{equation*}
Besides, 
$$T_3 \leq \max_{ \theta \in {\cM_{\varepsilon}} } (D_{{f_{\theta}}}-2\w{D}_{{f_{\theta}}}).$$
Therefore, gathering Equation~\eqref{eq:eqDecompRisk1}, ~\eqref{eq:eqDecompRisk3}, ~\eqref{eq:eqDecompRisk4}, and~\eqref{eq:eqDecompRisk4bis},
we deduce that 
\begin{eqnarray}
\label{eq:bound2}
\E[ \cR_2(\w{f})- \cR_2({f}^*)] &\leq & \E\left[\max_{ \theta \in {\cM_{\varepsilon}} } (D_{{f}_{\theta}}-2\w{D}_{{f}_{\theta}})\one_{\{\Omega\}}\right] +  \frac{C}{n}. 
\end{eqnarray}

To finish the proof, it remains to control the first term in the \textit{r.h.s.} of Inequality~\eqref{eq:bound2}.
Conditional on $\mathcal{D}_n^{(1)}$,
we have that 
\begin{equation*}
 \mathbb{E}\left[\max_{ \theta \in {\cM_{\varepsilon}} } (D_{{f}_{\theta}}-2\w{D}_{{f}_{\theta}})\one_{\{\Omega\}} | \mathcal{D}_n^{(1)}\right]  = \one_{\{\cA\} }\mathbb{E}\left[\max_{ \theta \in {\cM_{\varepsilon}} } (D_{{f}_{\theta}}-2\w{D}_{{f}_{\theta}})\one_{\{\hat{S} = S^{*}\}} | \mathcal{D}_n^{(1)}\right].  
\end{equation*}
Note that On the set $\{\hat{S} = S^*\}$,
the set $\mathcal{M}_{\varepsilon}$ is an $\varepsilon$-net of the {\it deterministic} set 
\begin{equation*}
\tilde{\Theta} =  \left\{\theta = \left(\theta_1, \ldots, \theta_K\right) \in \Theta_n^K, \;\; {\rm supp}(A_k)= S^*_k\right\},
\end{equation*}
and then is also deterministic. Besides, from Lemma~\ref{lem:epsnet}, we deduce that for 
$\varepsilon = \dfrac{1}{n^3M\log(n)}$
\begin{equation*}
\log\left(\left|\mathcal{M}_{\varepsilon}\right|\right) \leq CK\left(M+s^*\right) \dfrac{\log(nM)}{n}.    
\end{equation*}
Furthermore, for $u> 0$ conditional on $\mathcal{D}_n^{(1)}$, it holds that
\begin{multline}
\label{eq:eqboundExpInt}
\E \left[\max_{\theta \in {\cM}_{\varepsilon}} (D_{{f}_{\theta}}-2\w{D}_{{f}_{\theta}}) \one_{\{\hat{S} = S^*\}}\right]
 \leq  u+ \int_u^{\infty} \P\left( \max_{\theta \in {\cM}_{\varepsilon}} (D_{{f}_{\theta}}-2\w{D}_{{f}_{\theta}})\one_{\{\hat{S} = S^*\}} \geq t \right)\dd t \\
 \leq u+ \int_u^{\infty} \P\left(\max_{\theta \in {\cM}_{\varepsilon}} (D_{{f}_{\theta}}-2\w{D}_{{f}_{\theta}}) \geq t \right)\dd t.
\end{multline}
Now, we have to bound the last term in the above equation. Let $\theta \in \mathcal{M}_{\varepsilon}$, and $f:=f_{\theta}$.
Let us introduce the least squares function 
$$\ell_{f}(Z, \cT):= \sum_{k = 1}^K (Z_k-{{f}}^k(\cT))^2.$$ 
Since for each $\theta \in \tilde{\Theta}$, ${f}_{\theta}$ is uniformly bounded by $1$, we get from Bernstein's inequality that, conditionally on $\cD_n^{(1)}$, for $t \geq 0$
\begin{eqnarray}
\label{eq:eqBernstein}
\P\left( D_{{f}}-2\w{D}_{{f}} \geq t \right) &\leq & \P\left( 2(D_{{f}}-\w{D}_{{f}}) \geq t + D_{{f}} \right)\nonumber \\
&\leq & \exp \left( \frac{-n(t+D_{f})^2/8}{ B_{{f}} + (t+D_{f})4K/3} \right),
\end{eqnarray}
with
\begin{equation*}
B_{{f}} :=  \E\left[\left(\ell_{{f}}(Z, \cT)-\ell_{{f}_{\tilde{\theta}}}(Z, \cT)\right)^2\right].
\end{equation*}
From the Cauchy-Schwartz inequality, we observe that conditionally on $\mathcal{D}_n^{(1)}$
\begin{eqnarray*}
\E\left[\left(\ell_{{f}}(Z, \cT)-\ell_{{f}^*}(Z, \cT)\right)^2\right]
&\leq &C_K 
\sum_{k=1}^K \mathbb{E}\left[({f}^k(\cT)-{f}^{*k}(\cT))^2\right] \\
&=& C_K \left(\cR_2({f})-\cR_2({f}^*) \right).
\end{eqnarray*}
Thus, since
\begin{equation*}
B_{{f}} \leq  2 \E\left[ \Big(\ell_{f}(Z, \cT)-\ell_{{f}^*}(Z, \cT)\Big)^2 \right] + 2\E\left[ \left(\ell_{{f}_{\tilde{\theta}}}(Z, \cT)-\ell_{{f}^*}(Z, \cT)\right)^2\right],    
\end{equation*}
we deduce that
$$B_{{f}}  \leq C_K\left(\cR_2({f})-\cR_2({f}^*)+\cR_2({f}_{\tilde{\theta}})-\cR_2({f}^*) \right).$$
Then,  as $\cR_2({f})-\cR_2({f}^*) = \cR_2({f})-\cR_2({f}_{\tilde{\theta}})+
\cR_2({f}_{\tilde{\theta}})-\cR_2({f}^*)$, on the event $\cA$ and  conditionally on $\cD_n^{(1)}$, we deduce from the above inequality and Equation~\eqref{eq:eqDecompRisk4} that
\begin{equation*}
B_{{f}}  
 \leq C_K\left(D_{{f}}  + \Delta_n\right).
\end{equation*}
Hence,  from Inequality~\eqref{eq:eqBernstein}, we get for $t \geq \Delta_n$,
\begin{equation*}
\P\left( D_{{f}}-2\w{D}_{{f}} \geq t \right) \leq  \exp\left( -C_K n t\right),
\end{equation*}
which leads to  
\begin{equation*}
\P\left( \max_{\theta \in  {\cM_{\varepsilon}}} (D_{{f}}-2\w{D}_{{f}}) \geq t  \right) \leq |{\cM_{\varepsilon}}| \exp\left(-C_K n t\right).
\end{equation*}
In view of Equation~\eqref{eq:eqboundExpInt}, we then obtain that, conditionally
on $\cD_n^{(1)}$,
\begin{equation*}
 \mathbb{E}\left[\max_{ \theta \in {\cM_{\varepsilon}} } (D_{{f}_{\theta}}-2\w{D}_{{f}_{\theta}})\one_{\{\Omega\}} | \mathcal{D}_n^{(1)}\right]
 \leq \max\left(\Delta_n, \frac{C \log(|{\cM_{\varepsilon}}|)}{n}\right)
+\int_{C \log({\cM_{\varepsilon}})/n}^{+\infty} |{\cM_{\varepsilon}}| \exp(-Cnt)\dd t.
\end{equation*}
As before, we use that $\mathbb{E}\left[\Delta_n\right] \leq C/n$,  and we deduce from the above inequality by integrating over $\cD_n^{(1)}$ that
\begin{equation*}
\E\left[\max_{ \theta \in {\cM_{\varepsilon}} } (D_{{f}_{\theta}}-2\w{D}_{{f}_{\theta}})\one_{\{\Omega\}}\right] \leq  \frac{C \log(|{\cM_{\varepsilon}}|)}{n}.
\end{equation*}
Since for $\varepsilon= 1/(\log(n)n^3 M)$ we have that 
$\log(|\cM_{\varepsilon}|) \leq C(M+s^*) \log(nM)$, we obtain from the above inequality and Equation~\eqref{eq:bound2} that
\begin{equation*}
\E[ \cR_2(\w{f})- \cR_2({f}^*)] \leq C\dfrac{(M+s^*)\log(nM)}{n}.
\end{equation*}
From the above inequality, we get the desired  by applying the Zhang's lemma  
\begin{equation*}
 \E[ \cR(\hat{g})- \cR(g^*)] \leq  \dfrac{1}{\sqrt{2}}  \left(\E[ \cR_2(\w{f})- \cR_2({f}^*)]\right)^{1/2}.
\end{equation*}

\end{proof}

\end{appendices}

\vskip 0.2in
\bibliographystyle{apalike}
\bibliography{biblio}

\end{document}